\documentclass {article}
\usepackage{authblk}
\usepackage[utf8]{inputenc}
\usepackage[english]{babel}
\usepackage{amsmath, amsthm, amssymb}
\usepackage{titlesec}
\usepackage{color}
\usepackage[color,matrix,arrow]{xy}
\usepackage{amsgen}
\usepackage{amstext}
\usepackage{amsfonts}
\usepackage{graphicx}
\usepackage{xfrac}
\usepackage{float}
\usepackage{todonotes}

\usepackage{pgf}
\usepackage{tikz-cd}
\usepackage{tikz}
\usetikzlibrary{automata, calc, shapes, decorations.pathreplacing, decorations.pathmorphing, arrows, arrows.meta, positioning}
\usepackage{eqnarray}
\usepackage{array}
\usepackage{faktor}
\usepackage{ mathrsfs }

\usepackage{enumitem}
\usepackage{hyperref}
\usepackage[hypcap=false]{caption}
\xyoption{all}

\newcommand{\footrecall}[1]{
}

\titleformat*{\section}{\large\bfseries}
\titleformat*{\subsection}{\normalsize \bfseries}

\newcommand{\N}{\mathbb{N}}
\newcommand{\Z}{\mathbb{Z}}

\newcommand{\HH}{\mathcal{H}}
\newcommand{\DD}{\mathcal{D}}
\newcommand{\LL}{\mathcal{L}}
\newcommand{\RR}{\mathcal{R}}
\newcommand{\WP}{\text{WP}}

\newcommand{\DGeo}{\mathcal{D}\text{-}\Geo}
\newcommand{\RGeo}{\mathcal{R}\text{-}\Geo}
\newcommand{\LGeo}{\mathcal{L}\text{-}\Geo}
\newcommand{\HGeo}{\mathcal{H}\text{-}\Geo}
\newcommand{\JGeo}{\mathcal{J}\text{-}\Geo}
\newcommand{\SL}{\text{SL}}

\DeclareMathOperator{\Sub}{Sub}
\newcommand{\abs}[1]{\left|#1\right|}

\newcommand{\R}{\mathbb{R}}

\newcommand{\Geo}{\text{Geo}}
\newcommand{\FIM}{\text{FIM}}

\theoremstyle{definition}
\newtheorem{theorem}{Theorem}[section]

\newtheorem{definition}[theorem]{Definition}

\newtheorem{proposition}[theorem]{Proposition}

\newtheorem{lemma}[theorem]{Lemma}

\newtheorem{remark}[theorem]{Remark}

\title{Green languages and growth of free inverse monoids}

\author[1]{Corentin Bodart}
\affil[1]{Département de Mathématiques, Université du Luxembourg, Luxembourg
	
	\texttt{cobodart123@gmail.com}\\}

\author[2]{André Carvalho}
\affil[2]{Centro de Investigação em Matemática e Aplicações (CIMA)
	
	Departamento de Matemática, Escola de Ciências e Tecnologia da Universidade de Évora
	
	Rua Romão Ramalho, 59, 7000–671 Évora, Portugal
	
	\texttt{andre.carvalho@uevora.pt}\\}

\author[3]{Ana-Catarina C. Monteiro}
\affil[3]{Center for Mathematics and Applications (NOVA Math), NOVA School of Science and Technology (NOVA FCT)\\
	\texttt{acatarinacm@gmail.com}}

\date{}

\begin{document}

\maketitle

\begin{abstract}
Motivated by recent work on conjugacy languages in groups, we introduce a general framework for studying \emph{languages of representatives} associated to equivalence relations on finitely generated semigroups. After developing this notion in full generality, we particularize it to Green's relations and obtain the corresponding Green languages, which form the central objects of this article. For free inverse monoids, we describe these languages via Munn trees and establish several structural relations among them, including characterizations of the $\mathcal{D}$-, $\mathcal{R}$-, $\mathcal{L}$-, and $\mathcal{H}$-languages.

We then investigate the formal language-theoretic complexity of these Green languages. For free inverse monoids of rank at least two, we show that $\HGeo$ is context-free and co-context-free but not regular, while $\RGeo$, $\LGeo$, and $\DGeo$ are neither context-free nor co-context-free. In contrast, in the monogenic case, $\DGeo$ is regular and the remaining Green languages are deterministic context-free.

We further relate these results to the ShortLex language and the growth of free inverse monoids, proving in particular that $(\FIM_X,X)$ is growth tight. The article concludes with a collection of open problems and directions for future research.
\end{abstract}

 \section{Introduction}
 
The interaction between formal language theory and algebraic structures such as groups and monoids has been widely studied. A classical example is the word problem for virtually free groups, which is a context-free language \cite{[MS83]}. Geodesic languages have also played an important role in geometric group theory. In particular, it is showed that, for several classes of groups, such as hyperbolic groups, the set of geodesic representatives with respect to a finite generating set forms a regular language (see for example \cite{[ECHLPT92], [Can84]}).

In the setting of free inverse monoids, the word problem for the free inverse monoid has been investigated from the viewpoint of formal languages, providing a natural extension of these ideas beyond groups. In \cite{[B18]} it is proved that the word problem in a free inverse monoid is not context-free and for rank greater than 1, it is not poly-context-free. Further, in \cite{brough2026free} it is proved that the free inverse monoid of every finite rank has co-context-free word problem.

The study of conjugacy languages, i.e., languages of representatives of conjugacy classes in groups, has been a prominent topic of research (see, e.g.\ \cite{[CHHR16], [CM25]}). Within various families of groups, these languages have been rigorously defined and characterized according to their language-theoretic complexity. Driven by this line of research, this work aims to extend this framework to the setting of semigroups and Green relations.

Hence, after introducing in Section~2 some preliminary concepts and results, in Section~3 we develop, for semigroups, the notion of a \emph{language of representatives} associated to an equivalence relation. More precisely, given an equivalence relation $\sim$ on a finitely generated semigroup $S$ with finite generating set $X$, we define the language of shortest words representing elements in each equivalence class by

\[
\sim\!\text{-}\Geo_X(S)
= \Bigl\{w\in X^* : \ell(w)=\min\bigl\{\ell(u): u\in X^*,\ (u\pi)\sim (w\pi)\bigr\}\Bigr\}.
\]

We then particularize this definition to the Green's relations $\mathcal{H},\mathcal{L},\mathcal{R},\mathcal{D},$ and $\mathcal{J}$ for an arbitrary finitely generated semigroup $S$, introducing the corresponding Green languages as follows:
\begin{equation*}
    \begin{aligned}
        \JGeo_X(S) &=\bigl\{w\in X^*: \ell(w)=\min\{\ell(u): u\in X^*,\ (u\pi)\mathcal{J}(w\pi)\}\bigr\},\\
        \DGeo_X(S)&=\bigl\{w\in X^*: \ell(w)=\min\{\ell(u): u\in X^*,\ (u\pi)\mathcal{D}(w\pi)\}\bigr\},\\
        \RGeo_X(S)&=\bigl\{w\in X^*: \ell(w)=\min\{\ell(u): u\in X^*,\ (u\pi)\mathcal{R}(w\pi)\}\bigr\},\\
        \LGeo_X(S)&=\bigl\{w\in X^*: \ell(w)=\min\{\ell(u): u\in X^*,\ (u\pi)\mathcal{L}(w\pi)\}\bigr\},\\
        \HGeo_X(S)&=\bigl\{w\in X^*: \ell(w)=\min\{\ell(u): u\in X^*,\ (u\pi)\mathcal{H}(w\pi)\}\bigr\}.
    \end{aligned}
\end{equation*}

These languages will constitute the central objects of this article.

In the setting of free inverse monoids, Section 4 develops a detailed description of Green languages via Munn trees, providing a visual framework for working with these languages.

In Section~5, we begin the study of the Green languages and prove that, for any finite generating set $X$, the following properties hold:
\begin{itemize}
    \item $\DGeo(\FIM_X) = \RGeo(\FIM_X)\cap \LGeo(\FIM_X)$,
    \item $\RGeo(\FIM_X)\cup \LGeo(\FIM_X) \subset \HGeo(\FIM_X)$,
    \item $\RGeo(\FIM_X)=\LGeo(\FIM_X)^{-1}$.
\end{itemize}

Sections~6 and~7 are dedicated to the investigation of whether these languages are regular or context-free. For any finite set $X$ with $|X|>1$, we show that $\HGeo(\FIM_X)$ is context-free and co-context-free (but not regular) (see Theorems~\ref{geo_CF} and~\ref{thm_co-cont}). Also, we justify that if we fix any total order on $\tilde X$ and consider only the shortlex representatives (defining the language $\HH\text{-}\SL(\FIM_X)$) we still have a context-free and co-context-free language (see Theorem \ref{Sl_Geo}). In contract, we prove that any language having at least one geodesic representative of each $\DD$-class (resp. $\RR$-class or $\LL$-class) is not context-free. Clearly, this prove in particular that $\RGeo(\FIM_X)$, $\LGeo(\FIM_X)$ and $\DGeo(\FIM_X)$ are not context-free (see Theorems ~\ref{D-CF} and \ref{RL_CF}). Similarly, we show that any language $L$ satisfying $\DD\text{-}\SL(\FIM_X)\subseteq L\subseteq \DGeo(\FIM_X)$ is not co-context-free (see Theorem \ref{DGeo_not_CCF}) and that the languages $\RGeo(\FIM_X)$ and $\LGeo(\FIM_X)$ are also not co-context-free (see Theorem \ref{RL_CCF}).

In Section~9 we examine the monogenic case, that is, the free inverse monoid on one generator, and prove that $\DGeo(\FIM_X)$ is a regular language, while $\LGeo(\FIM_X)$, $\RGeo(\FIM_X)$ and $\HGeo(\FIM_X)$ are deterministic context-free, but not regular (see Theorem~\ref{thm_monog}).

The table below summarizes the results obtained regarding the Green languages on free inverse monoids.

\begin{center}
\begin{tabular}{ | m{6.8em} | m{1.9cm}| m{2.4cm} | m{2.4cm} | m{2.4cm} | } 
  \hline
  & $\HGeo$ & $\RGeo$ & $\LGeo$ & $\DGeo$ \\ 
  \hline
  Monogenic  & DCF & DCF & DCF & Regular \\ 
  \hline
  Non Monogenic & CF + co-CF & $\neg$CF\ +\ $\neg$co-CF & $\neg$CF + $\neg$co-CF & $\neg$CF + $\neg$co-CF \\ 
  \hline
\end{tabular}
\end{center}

\vspace{0.3cm}
\medskip

In the last section, we study the growth of $\FIM_X$. In \cite{Growth_FIM}, Kambites, Nyberg-Brodda, Szakács and Webb studied the exponential growth rate
\[ \alpha(\FIM_X,X) = \limsup_{n\to\infty} \sqrt[n]{\gamma(n)} \]
where $\gamma(n) = \#\{g\in \FIM_X : \abs{g}_X=n\}$. They prove that $\alpha(\FIM_X,X)$ is algebraic, and obtain asymptotic results when $\abs X\to\infty$. More strongly, the fact that the language $\SL(\FIM_X)$ of ShortLex representatives is unambiguously context-free implies that the growth series 
\[ \Gamma_{\FIM_X}(z) = \sum_{n=0}^\infty \gamma(n)\cdot z^n \]
is algebraic, recovering a result of Lau \cite{Lau}. (This is similar to results of Parry for the wreath product $\Z/2\Z\wr F_X$ \cite{Parry}. This similarity is explained by the fact that $\FIM_X\le L\wr F_X$, where $L=\mathrm{Mon}\langle e\mid e^2=e\rangle$.) Using the context-free language $\SL(\FIM_X)$, we prove that $(\FIM_X,X)$ is \emph{growth tight}, answering Question 5.9 of \cite{Growth_FIM}. Explicitly, we prove that if $M$ is an inverse monoid generated by a finite set $X$, with $\abs X\ge 2$, then $\alpha(M,X) \le \alpha(\FIM_X,X)$ with equality if and only if $M=\FIM_X$ (Theorem \ref{growth}).

The same result is known for free groups and free monoids, however these proofs use Stalling foldings which are not available here. Instead, we study the singularities of the growth series $\Gamma_{\FIM_X}(z)$ and use tools from analytic combinatorics.

\bigskip

We believe that this article sets out a new framework that opens the way to a broad range of further developments, within which several new research directions can naturally emerge. The paper concludes in Section~11 with a list of open questions together with a brief discussion of ongoing work, that already builds upon the ideas introduced in this work.

\section{Preliminaries}
In this section we present basic definitions and results on formal languages, and free inverse monoids and Munn trees, that will be essential throughout the paper.
\subsection{Formal languages}
\subsubsection{Regular languages}
An alphabet $X$ is a set and its elements are called letters. A word over $X$ is simply a finite sequence of letters of $X$.
Given an alphabet $X$, the Kleene star ($*$) on $X$ generates the set of all words over $X$, i.e.,
$$
X^* = \bigcup_{n \ge 0} X^n,
$$
where $X^0 = \{\varepsilon\}$ and $\varepsilon$ denotes the empty sequence. Together with concatenation, $X^*$ forms the free monoid generated by $X$.

A language $L \subseteq X^*$ is \textit{regular} if it can be constructed from $\emptyset$, $\{\varepsilon\}$, and $\{a\}$ (for all $a \in X$) through a finite sequence of regular operations: union ($\cup$), concatenation ($\cdot$), and Kleene star ($*$). Recall that regular languages are closed under finite unions, intersections, and complementation. Furthermore, the class of regular languages coincides precisely with the languages accepted by deterministic finite automata (DFAs) (for more details, the reader is referred to \cite{[HU79]}).

	Also, every regular language $L$ satisfies the \textit{Pumping Lemma}, which can be stated as follows.
    \begin{lemma}[Pumping Lemma for Regular Languages]
Let $L$ be a regular language. Then there exists a constant $p \geq 1$ such that every word $w \in L$ with $|w| \geq p$ can be written in the form
\[
w = xyz
\]
satisfying the following conditions:
\begin{enumerate}
    \item $|xy| \leq p$;
    \item $|y| \geq 1$;
    \item For all $i \geq 0$, the word $x y^i z$ belongs to $L$.
\end{enumerate}
\end{lemma}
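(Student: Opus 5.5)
The plan is the standard pigeonhole argument on a deterministic finite automaton. Since $L$ is regular, the characterization recalled above gives a DFA $\mathcal{A}=(Q,X,\delta,q_0,F)$ recognizing $L$. I will take $p=|Q|$, the number of states; then $p\ge 1$, because $Q$ contains at least the initial state.

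First, fix $w=a_1a_2\cdots a_n\in L$ with $n\ge p$. Consider the run of $\mathcal{A}$ on $w$, namely the states $q_i=\delta^*(q_0,a_1\cdots a_i)$ for $0\le i\le n$, where $\delta^*$ is the usual extension of $\delta$ to words. The first $p+1$ states $q_0,q_1,\dots,q_p$ are indexed by $p+1$ values but lie in a set of size $p$. By pigeonhole there exist indices $0\le i<j\le p$ with $q_i=q_j$.

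Next, define
\[
x=a_1\cdots a_i,\qquad y=a_{i+1}\cdots a_j,\qquad z=a_{j+1}\cdots a_n .
\]
Then $|xy|=j\le p$ and $|y|=j-i\ge 1$, which gives conditions 1 and 2. For condition 3, we have $\delta^*(q_0,x)=q_i$ and $\delta^*(q_i,y)=q_j=q_i$. Induction on $k$ then gives $\delta^*(q_i,y^k)=q_i$ for every $k\ge 0$. Hence
\[
\delta^*(q_0,xy^kz)=\delta^*(q_i,z)=\delta^*(q_j,z)=q_n\in F,
\]
so $xy^kz\in L$ for all $k\ge 0$.

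There is no real obstacle here. The only point needing care is to apply pigeonhole to the prefix states $q_0,\dots,q_p$ rather than to the whole run, since that is what gives $|xy|\le p$. We also need the DFA to be complete (total transition function), which we may assume by adding a sink state; this only changes the value of $p$.
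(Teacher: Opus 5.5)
Your argument is correct: it is the standard pigeonhole proof on the states of a DFA. The paper does not prove this lemma itself; it quotes it as a standard fact of formal language theory with a pointer to Hopcroft--Ullman, where essentially this argument appears. Your closing remark about completing the DFA is harmless but unnecessary, since every transition used on $xy^kz$ already occurs in the accepting run on $w$.
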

\subsubsection{Context-free languages}
Recall that a \emph{context-free language} is a language generated by a \emph{context-free grammar}, which is a quadruple $G = (V, \Sigma, R, S)$ where $V$ is a finite set of variables, $\Sigma$ is a finite set of terminals, $R$ is a finite set of production rules of the form $A \to \alpha$ with $A \in V$ and $\alpha \in (V \cup \Sigma)^*$, and $S \in V$ is the start symbol. A string $w \in \Sigma^*$ is said to be generated by $G$ if $S \to^* w$, and the \emph{language of the grammar} $G$ is defined as

$$
L(G) = \bigl\{ w \in \Sigma^* \mid S \to^* w \bigr\},
$$

where $\to^*$ denotes the reflexive and transitive closure of the derivation relation $\to$, that is, zero or more derivation steps. In a similar way, we define $\to^+$ as the transitive closure of $\to$, that is, one or more derivation steps. Let us list a few more properties of grammars:
\begin{itemize}[leftmargin=5mm]
\item A grammar is \emph{reduced} if, for all $A\in V$, there exists $\alpha,\beta\in (V\cup\Sigma)^*$ and $w\in\Sigma^*$ such that $S\to^* \alpha A\beta$ and $A\to^* w$. For every grammar $G$, there exists a reduced grammar $\tilde G$ such that $L(G)=L(\tilde G)$. We will always assume that our grammars are reduced.

\item A grammar is \emph{unambiguous} if, for every word $w\in L(G)$, there exists a unique derivation
\[
S \to \alpha_1 \to \alpha_2 \to \ldots \to \alpha_\ell \to w
\]
such that, at every step, the production rule is used on the leftmost variable of $\alpha_i$.

\item A grammar is \emph{non-linear} if there exists $A\in V$ such that $A\to^* \alpha A\beta A\gamma\in (V\cup\Sigma)^*$.

\item To each context-free grammar, we associate a \emph{dependency (di)graph} $D(G)$ with
	\begin{itemize}[leftmargin=5mm]
		\item vertex set $V$,
		\item if $A\to \alpha B\beta\in (V\cup\Sigma)^*$, then we put an oriented edge $A\leadsto B$.
	\end{itemize}
A grammar is \emph{ergodic} if $D(G)$ is strongly connected.

\item A grammar is \emph{irreducible} if it is both ergodic and non-linear.
```

\end{itemize}

A language $L$ is \emph{context-free} if there exists a context-free grammar $G$ such that $L = L(G)$.

A \emph{pushdown automaton} is a tuple

$$
\mathcal{A}=(Q,\Sigma,\Gamma,\delta,q_0,Z_0,F),
$$

where $Q$ is a finite set of states, $\Sigma$ is the input alphabet, $\Gamma$ is the stack alphabet, $q_0\in Q$ is the initial state, $Z_0\in\Gamma$ is the initial stack symbol, $F\subseteq Q$ is the set of final states, and

$$
\delta:Q\times(\Sigma\cup\{\varepsilon\})\times\Gamma
\rightharpoonup Q\times\Gamma^*
$$

is a partial transition function. A \emph{one-counter automaton} is a pushdown automaton

$$
\mathcal A=(Q,\Sigma,\Gamma,\delta,q_0,Z_0,F)
$$

whose stack alphabet is of the form

$$
\Gamma=\{Z_0,X\},
$$

where $Z_0$ is a bottom-of-stack symbol and $X$ is the counter symbol. Thus, the stack has the form $X^nZ_0$ for some $n\in\mathbb N_0$, and the integer $n$ may be regarded as the value of a counter. Transitions may increase the counter by one, decrease it by one when it is positive, or leave it unchanged; the symbol $Z_0$ is never removed from the stack.

A one-counter automaton is said to be \emph{deterministic} if, from every configuration, there is at most one possible transition. Equivalently, its transition function
\[
\delta:Q\times(\Sigma\cup\{\varepsilon\})\times\Gamma
\rightharpoonup Q\times\Gamma^*
\]
is a partial function and, for every $q\in Q$ and $Y\in\Gamma$, if
$\delta(q,\varepsilon,Y)$ is defined, then $\delta(q,a,Y)$ is undefined
for every $a\in\Sigma$.

A language is called \emph{deterministic one-counter} if it is accepted by a deterministic one-counter automaton.

Every deterministic one-counter language is deterministic context-free, and therefore context-free. Moreover, since the class of deterministic context-free languages is closed under complementation, every deterministic one-counter language is also co-context-free.
Two of the most well-known properties of the class of context-free languages are that it is closed under intersection with regular languages and under finite unions. For further details on context-free languages see, for example, \cite{[Ber79]} or \cite{[HU79]}.

A language is said to be \emph{co-context-free} if its complement is context-free. As a consequence of the fact that context-free languages are closed under finite unions, it follows that co-context-free languages are closed under finite intersections.

Below we state two necessary conditions for a language to be context-free, which will be particularly useful in this paper for proving that given languages are not context-free.

\begin{lemma}[Pumping Lemma for Context-Free Languages]
Let $L$ be a context-free language. Then there exists a constant $p \geq 1$ such that every word $w \in L$ with $|w| \geq p$ can be written in the form
\[
w = uvxyz
\]
satisfying the following conditions:
\begin{enumerate}
    \item $|vxy| \leq p$;
    \item $|vy| \geq 1$;
    \item For all $i \geq 0$, the word $uv^i x y^i z$ belongs to $L$.
\end{enumerate}
\end{lemma}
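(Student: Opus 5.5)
We prove the pumping lemma by working with a context-free grammar $G=(V,\Sigma,R,S)$ with $L=L(G)$, assumed reduced. First we pass to Chomsky normal form, so that every rule has the form $A\to BC$ or $A\to a$ (handling $\varepsilon$ separately, which does not affect words of length at least $p$). In this form every parse tree is binary at internal nodes above the leaves. Hence if a derivation tree has height at most $h$ (counting variable nodes on the longest root-to-leaf path), then its yield has length at most $2^{h-1}$. Let $k=|V|$ and set $p=2^{k}$.

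Now take $w\in L$ with $|w|\ge p$ and choose a parse tree for $w$ with the minimum number of nodes. Since $|w|>2^{k-1}$, the tree has a root-to-leaf path containing at least $k+1$ variable nodes. Look at the lowest $k+1$ variable nodes on a longest such path. By pigeonhole, two of them carry the same variable $A$; call the upper occurrence $N_1$ and the lower one $N_2$. Let $x$ be the yield of $N_2$, and write the yield of $N_1$ as $vxy$. Write $w=uvxyz$. Then
\[
S\to^* uAz,\qquad A\to^+ vAy,\qquad A\to^* x.
\]

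Three checks remain.
\begin{enumerate}
\item The subtree at $N_1$ has height at most $k+1$, because we chose the lowest $k+1$ nodes on a longest path. This gives $|vxy|\le 2^{k}=p$.
\item Since $N_1$ applies a rule $A\to BC$, exactly one of its children's subtrees contains $N_2$. The other child yields a nonempty word, because there are no $\varepsilon$-rules. Hence $|vy|\ge1$.
\item Iterating the derivation $A\to^+ vAy$ $i$ times, or replacing the subtree at $N_1$ by the one at $N_2$ when $i=0$, yields $uv^ixy^iz\in L$ for all $i\ge 0$.
\end{enumerate}

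The only delicate point is the height-to-length bound and the choice of the repeated pair among the lowest nodes, which is what yields $|vxy|\le p$. Minimality of the tree is not actually needed once $\varepsilon$- and unit-rules have been eliminated by the normal form.
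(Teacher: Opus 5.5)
Your proof is correct: the height-to-yield bound in Chomsky normal form, the repeated variable found among the lowest $k+1$ variable nodes of a longest root-to-leaf path, and your three checks are all sound. The paper does not prove this lemma; it quotes it as a standard fact and points to the textbooks of Berstel and of Hopcroft--Ullman, and your argument is the standard proof given there.
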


\begin{lemma}[Ogden's Lemma]
Let $L\subseteq\tilde X^*$ be a context-free language. Then there exists a constant $p \geq 1$ such that for every word $w \in L$ with $|w| \geq p$, and for every choice of at least $p$ distinguished positions in $w$, there exists a decomposition
\[
w = uvxyz
\]
satisfying the following conditions:
\begin{enumerate}
    \item The substrings $v$ and $y$ together contain at least one distinguished position;
    \item The substring $vxy$ contains at most $p$ distinguished positions;
    \item For all $i \geq 0$, the word $uv^i x y^i z$ belongs to $L$.
\end{enumerate}
\end{lemma}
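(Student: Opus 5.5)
The statement just given is Ogden's Lemma, and I would prove it the standard way: through a grammar in Chomsky normal form and a pigeonhole argument along a root-to-leaf path of a parse tree, choosing the path by following the distinguished positions.

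\emph{Setup.} If $\varepsilon\in L$, discard it; this does not affect words of length at least $p$. Then take a grammar $G=(V,\tilde X,R,S)$ in Chomsky normal form generating $L\setminus\{\varepsilon\}$, so every rule has the form $A\to BC$ or $A\to a$. Parse trees are then binary, and each internal node with two children has both subtrees yielding nonempty words. Let $k=|V|$ and set $p=2^{k+1}$.

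\emph{Choosing the path.} Fix $w\in L$ with at least $p$ distinguished positions, and fix a parse tree for $w$. Call an internal node a \emph{branch point} if both of its subtrees contain distinguished leaves. Build a path from the root: at each node, step into the child whose subtree contains more distinguished leaves, breaking ties arbitrarily. At a branch point, the count of distinguished leaves below drops by at most half; at any other node it does not drop. The root sees at least $2^{k+1}$ distinguished leaves and the final leaf sees at most one. Hence the path passes through at least $k+1$ branch points.

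\emph{Pigeonhole and decomposition.} Consider the last $k+1$ branch points on the path, counted upward from the leaf. Two of them carry the same variable $A$; call the upper one $\nu_1$ and the lower one $\nu_2$. Their yields give the decomposition $w=uvxyz$:
\begin{itemize}
\item $\nu_2$ yields $x$;
\item $\nu_1$ yields $vxy$;
\item the derivations satisfy $S\to^* uAz$, $A\to^* vAy$ and $A\to^* x$.
\end{itemize}
Iterating the middle derivation $i$ times, or skipping it when $i=0$, gives $uv^ixy^iz\in L$ for every $i\ge0$.

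\emph{The distinguished-position conditions.} Since $\nu_1$ is a branch point, its off-path child contains a distinguished leaf. That subtree lies entirely inside $v$ or inside $y$, which gives condition (1). For condition (2), the subtree at $\nu_1$ contains at most $k+1$ branch points on the path below it (counting $\nu_1$ itself). Walking down from a node whose subtree has $d$ distinguished leaves, passing $j$ branch points, halving at each one, and ending at a single leaf, gives $d\le 2^{j}$. So $vxy$ contains at most $2^{k+1}=p$ distinguished positions.

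The main obstacle is this last counting step: one has to check that the greedy choice of path gives the reverse bound $d\le 2^{j}$ as well as the lower bound used for pigeonholing. This holds because at a branch point the chosen child holds at least half of the distinguished leaves, and at a non-branch point it holds all of them. The rest of the argument is routine.
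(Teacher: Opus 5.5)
Your proof is correct. The paper gives no proof of its own to compare against: it quotes Ogden's Lemma in the preliminaries as a standard tool from formal language theory, pointing to textbooks such as Hopcroft--Ullman. Your argument is the classical textbook proof, using Chomsky normal form, a greedy root-to-leaf path, and pigeonhole on branch points. It also handles the counting correctly in both directions, giving at least $k+1$ branch points for the pigeonhole and at most $2^{k+1}$ distinguished positions in $vxy$.
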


An important framework in formal language theory is the \emph{Chomsky hierarchy}, which classifies languages according to their expressive power and the types of grammars that generate them. It consists of four classes, regular, context-free, context-sensitive, and recursively enumerable languages, arranged in a hierarchy by inclusion, where each class properly contains the previous one. In particular, every regular language is context-free, and every context-free language is context-sensitive. We may also consider the class of co-context free languages, that is known not to coincide with any level of the Chomsky hierarchy. The diagram below illustrates the relationships between these classes.
\begin{center}
    \begin{tikzpicture}[scale=1.2]
        \draw[thick] (0,1) circle (3.0);
        \node at (0,3.25) {Recursively enumerable};
        \draw[thick] (0,0.5) circle (2.5);
        \node at (0,2) {Context sensitive};
        \draw[thick] (-0.8,0) circle (1.4);
        \node at (-1.3,0.3) {Context};
        \node at (-1.3,0) {free};

        \draw[thick] (0.8,0) circle (1.4);
        \node at (1.3,0.3) {Co-context};
        \node at (1.3,0) {free};

        \draw[thick, fill=gray!20] (0,0) circle (0.55);
        \node at (0,0) {Regular};
    \end{tikzpicture}
    \captionof{figure}{Relations between classes of languages}
\end{center}

\subsection{Free inverse monoids}
In this section, we recall the basic definitions and notation concerning inverse monoids, with particular emphasis on free inverse monoids.

Throughout the paper, $X$ will always denote a finite set.

An \emph{inverse monoid} is a monoid $S$ such that for every element $s \in S$ there exists a unique element $s^{-1} \in S$ satisfying
\[
s s^{-1} s = s \quad \text{and} \quad s^{-1} s s^{-1} = s^{-1}.
\]
The element $s^{-1}$ is called the \emph{inverse} of $s$. A \emph{free inverse monoid} on a set $X$ is an inverse monoid $\FIM_X$ together with a map $\iota : X \to \FIM_X$ satisfying the universal property that, for every inverse monoid $S$ and every map $\varphi : X \to S$, there exists a unique homomorphism $\overline{\varphi} : \FIM_X \to S$ such that $\overline{\varphi} \circ \iota = \varphi$.

Let $\tilde{X} = X \cup X^{-1}$, where $X^{-1} = \{x^{-1} \mid x \in X\}$ is a disjoint copy of $X$. We denote by $\tilde{X}^*$ the free monoid on $\tilde{X}$. Given an inverse monoid $S$ generated by $X$, we denote by
\[
\pi : \tilde{X}^* \to S
\]
the canonical surjective homomorphism extending the map $x \mapsto x$ and $x^{-1} \mapsto x^{-1}$ for all $x \in X$. We will write $\ell(u)$ to denote the length of a word $u\in \tilde X$.

As usual, $E(S)$ represents the set of idempotents of $S$, that is,
\[
E(S) = \{ e \in S \mid e^2 = e \}.
\]
In an inverse monoid, the set $E(S)$ forms a commutative subsemigroup.

We now recall the Green's relations on a semigroup $S$. For $a,b \in S$, we define:
\begin{align*}
a \,\mathcal{L}\, b &\iff S^1 a = S^1 b, \\
a \,\mathcal{R}\, b &\iff a S^1 = b S^1, \\
a \,\mathcal{J}\, b &\iff S^1 a S^1 =S^1 b S^1,
\end{align*}
where $S^1$ denotes $S$ if it is a monoid, and $S$ with an identity adjoined otherwise.

The relations $\mathcal{L}$, $\mathcal{R}$ and $\mathcal{J}$ are equivalence relations on $S$.

The relations $\mathcal{H}$ and $\mathcal{D}$ are defined by
\[
\mathcal{H} = \mathcal{L} \cap \mathcal{R}
\quad \text{and} \quad
\mathcal{D} = \mathcal{L} \vee \mathcal{R},
\]
where $\vee$ denotes the join of equivalence relations. Equivalently, $\mathcal{D}$ is the smallest equivalence relation containing both $\mathcal{L}$ and $\mathcal{R}$. Also, recall that in a free inverse monoid $\mathcal{D}=\mathcal{J}$ \cite{[M72]}.

\subsection{Munn trees}

Given a finite set $X$, the Munn tree of a word in $\tilde X^*$ representing an element on $\FIM_X$ is a birooted tree. It is constructed by first fixing an initial vertex, and then, for each successive letter of the word, adding a new edge labelled by that letter. An edge is added unless the letter is the inverse of the label of an edge adjacent to the current vertex; in that case, that edge is crossed and no new edge is added. Once all letters are read, the final vertex is also fixed. For more details see \cite{[M72]}.

For any $u\in X^*$, the Munn tree representing $u\pi$, where $\pi$ is the natural epimorphism between $\tilde X^*$ and $\FIM_X$ will be denoted by $MT(u)=(\Gamma(u),\alpha(u),\beta(u))$, where $\alpha(u)$ and $\beta(u)$ are the initial and final vertices, respectively. For example, if $u=aabb^{-1}a$, the triple $(\Gamma(u),\alpha(u),\beta(u))$ can be represented by:

\begin{center}
	\begin{tikzpicture}[
		vertex/.style={circle, draw, inner sep=2pt},
		>={Stealth[length=5pt]}
		]

		\node[vertex] (e) at (0,0) {};
		\node[vertex] (a1) at (1.5,0) {};
		\node[vertex] (a2) at (3,0) {};
		\node[vertex] (a3) at (4.5,0) {};
		\node[vertex] (b)  at (3,1.5) {}; 
		\draw[->] (e) -- node[below] {$a$} (a1);
		\draw[->] (a1) -- node[below] {$a$} (a2);
		\draw[->] (a2) -- node[below] {$a$} (a3);
		\draw[->] (a2) -- node[right] {$b$} (b);
		
		\draw[->, thick] (-0.6,0) -- (e);
		\draw[->, thick] (a3) -- +(0.6,0);
		
	\end{tikzpicture}
\end{center}

In \cite{[M72]} the following characterization of the Green's relations regarding Munn tress is proved. 
\begin{theorem}{\cite{[M72]}}\label{Munn_Green}
	For any $u,v\in \tilde X^*$, then
	\begin{enumerate}
		\item $(u\pi)\mathcal{D} (v\pi)$ if and only $u\pi\mathcal{J}v\pi$ if and only if $\Gamma(u)=\Gamma(v)$;
		\item $(u\pi)\mathcal{R}(v\pi)$ if and only if $\Gamma(u)=\Gamma(v)$ and $\alpha(u)=\alpha(v)$;
		\item $(u\pi)\mathcal{L}(v\pi)$ if and only if $\Gamma(u)=\Gamma(v)$ and $\beta(u)=\beta(v)$;
		\item $(u\pi)\mathcal{H}(v\pi)$ if and only if $u\pi=v\pi$ if and only if $MT(u)=MT(v)$.
	\end{enumerate}
\end{theorem}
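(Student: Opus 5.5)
The plan is to start from Munn's solution of the word problem: two words $u,v\in\tilde X^*$ satisfy $u\pi=v\pi$ if and only if $MT(u)=MT(v)$, i.e.\ they determine the same birooted tree. Here $\Gamma(u)$ is the (finite, connected) subtree of the Cayley graph of the free group $F_X$ spanned by the path read by $u$ starting at $1$, and $\alpha(u)=1$, $\beta(u)=u\rho$, where $\rho$ is the reduction map to $F_X$. Once this is accepted, item 4 is immediate, since $\mathcal H$ is trivial in $\FIM_X$; it is the only item of this kind in which $\mathcal H$ collapses to equality. That triviality follows because $a\mathcal R b$ and $a\mathcal L b$ in an inverse monoid mean $aa^{-1}=bb^{-1}$ and $a^{-1}a=b^{-1}b$. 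Because the Munn tree of $u$ is determined up to isomorphism of labelled birooted trees, I will read ``$=$'' throughout as equality up to such isomorphism, equivalently equality after translating so that $\alpha$ is the identity of $F_X$.

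For item 2, I will use the standard fact that in an inverse monoid $a\mathcal R b$ if and only if $aa^{-1}=bb^{-1}$. A direct computation gives $MT(uu^{-1})=(\Gamma(u),\alpha(u),\alpha(u))$, since reading $u^{-1}$ retraces the path back to the start and adds no edges. Hence $u\pi\,\mathcal R\,v\pi$ holds if and only if $(\Gamma(u),\alpha(u),\alpha(u))=(\Gamma(v),\alpha(v),\alpha(v))$ as birooted trees, which is exactly the stated condition. Item 3 is symmetric: $a\mathcal L b$ if and only if $a^{-1}a=b^{-1}b$, and $MT(u^{-1}u)=(\Gamma(u),\beta(u),\beta(u))$. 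Here ``same tree'' is understood with the distinguished vertex pinned, so the trees are compared after translating that vertex to $1$.

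For item 1, $\mathcal D=\mathcal J$ is cited from \cite{[M72]}, so only the $\mathcal D$ statement needs proof. In an inverse monoid, $a\mathcal D b$ if and only if there is $c$ with $aa^{-1}=cc^{-1}$ and $c^{-1}c=b^{-1}b$. Suppose $a\mathcal D b$ with $a=u\pi$ and $b=v\pi$, and let $c=w\pi$. By items 2 and 3, $\Gamma(u)$ is isomorphic to $\Gamma(w)$ and $\Gamma(w)$ is isomorphic to $\Gamma(v)$, forgetting the roots. So the underlying trees coincide as labelled trees up to translation in $F_X$, which is the intended meaning of $\Gamma(u)=\Gamma(v)$. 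Conversely, suppose the two trees agree up to translation. Choose a word $w$ that traverses the whole tree starting at $\alpha(u)$ and ending at the vertex corresponding to $\beta(v)$; such a walk exists because the tree is finite and connected. Then $MT(w)$ has $\mathcal R$-data equal to that of $u$ and $\mathcal L$-data equal to that of $v$, so $u\pi\,\mathcal R\,w\pi\,\mathcal L\,v\pi$.

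The main obstacle is the bookkeeping around ``equality'' of trees. Item 1 forces us to forget the roots and compare trees only up to $F_X$-translation, while items 2 and 3 fix one root. The rest is routine once the $aa^{-1}$ and $a^{-1}a$ characterizations of $\mathcal R$ and $\mathcal L$ are in hand.
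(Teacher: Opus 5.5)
The paper gives no proof of this statement; it quotes it from Munn \cite{[M72]}. Your route is the standard one: take Munn's word-problem theorem $u\pi=v\pi\iff MT(u)=MT(v)$ as given, use that the $\mathcal R$- and $\mathcal L$-classes of $a$ contain the unique idempotents $aa^{-1}$ and $a^{-1}a$, and compute $MT(uu^{-1})=(\Gamma(u),\alpha(u),\alpha(u))$ and $MT(u^{-1}u)=(\Gamma(u),\beta(u),\beta(u))$. Your items 2, 3 and 1 are correct, including the traversal word $w$ giving $u\pi\,\mathcal R\,w\pi\,\mathcal L\,v\pi$, and the interpretation of ``$=$'' with the relevant root pinned.

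\textbf{The gap is in item 4}, specifically $(u\pi)\,\mathcal H\,(v\pi)\iff u\pi=v\pi$. That equivalence \emph{is} the triviality of $\mathcal H$, so calling it ``immediate since $\mathcal H$ is trivial'' is circular.

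Your justification is that $a\,\mathcal R\,b$ and $a\,\mathcal L\,b$ mean $aa^{-1}=bb^{-1}$ and $a^{-1}a=b^{-1}b$. This holds in every inverse monoid, including every group, where $\mathcal H$ is universal, so it cannot give triviality by itself. Under your own convention (item 2 pins $\alpha$, item 3 pins $\beta$), items 2 and 3 together only give two label-preserving isomorphisms: one $(\Gamma(u),\alpha(u))\to(\Gamma(v),\alpha(v))$ and one $(\Gamma(u),\beta(u))\to(\Gamma(v),\beta(v))$. These need not a priori coincide, so $MT(u)=MT(v)$ does not yet follow.

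The missing ingredient is a rigidity property specific to $F_X$:
\begin{enumerate}
\item Embed both trees in the Cayley graph of $F_X$ with $\alpha=1$.
\item Item 2 gives $\Gamma(u)=\Gamma(v)=:T$ literally.
\item Item 3 gives $g^{-1}T=h^{-1}T$, where $g=\beta(u)$ and $h=\beta(v)$. Hence $k:=hg^{-1}$ satisfies $kT=T$.
\item Since $T$ is finite and nonempty, left multiplication by $k$ permutes its vertex set. So $k^{n}$ with $n=|T|!$ fixes every vertex $t$, giving $k^n t=t$ and thus $k^n=1$.
\item Because $F_X$ is torsion-free, $k=1$. Therefore $\beta(u)=\beta(v)$, $MT(u)=MT(v)$, and $u\pi=v\pi$ by the word-problem theorem.
\end{enumerate}
Equivalently, a finite labelled subtree of the Cayley graph of $F_X$ has no nontrivial label-preserving automorphism.

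With this step added, the proposal is complete. It still takes the word-problem theorem and $\mathcal D=\mathcal J$ from \cite{[M72]}, which matches how the paper treats the result.
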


Note that in a Munn tree, returning to the same vertex generates an idempotent.

Throughout this paper, by a \emph{path} (or \emph{walk}) we mean any sequence of consecutive edges in the Munn tree, and by a \emph{simple path} we mean a path that does not repeat vertices (and hence does not repeat edges). Since we are working with trees, there is a unique simple path connecting any two vertices. As usual, we will say that a vertex is a \textit{leaf} if it has degree 1.

Given two vertices $p$ and $q$ in a Munn tree, we will denote by $d(p,q)$ the length of the simple path between them, where by length we mean the number of edges on the simple path.

It is well known that any geodesic word $u$ in a free inverse monoid can be decomposed in the form
\[
u = e_1 u_1 e_2 \cdots e_n u_n e_{n+1},
\]
where $u_1 u_2 \cdots u_n$ is a reduced word when projected to the free group, $e_i \in E(\FIM)\setminus\{1\}$ for $i \in \{2, \ldots, n\}$, and $e_1, e_{n+1} \in E(\FIM)$. This decomposition is unique (see~\cite{[PS05]}), a fact that will be  recalled later in this paper. In this paper we will use the notation $\bar u=u_1u_2\ldots u_n$.

Throughout this article, we will often view words as paths in their associated Munn tree.
Moreover, we will follow the notation of \cite{[PS05]} and, given two words \( u \) and \( v \), we write \( u = v \) when they are equal as words, and \( u \sim v \) when they represent the same element in the free inverse monoid.

\begin{lemma}\label{passar3x}
    If a  word over $\widetilde X^*$ crosses an edge of its Munn tree at least three times, then it is not a geodesic.
\end{lemma}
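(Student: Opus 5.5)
We prove the contrapositive by an explicit shortening. Let $w\in\tilde X^*$ and let $e$ be an edge of $\Gamma(w)$, joining vertices $p$ and $q$, which the path read by $w$ crosses at least three times. Removing $e$ from the tree $\Gamma(w)$ leaves two components, $T_p\ni p$ and $T_q\ni q$. The walk $w$ can pass from one component to the other only by crossing $e$. So the crossings of $e$ alternate in direction: $p\to q$, $q\to p$, $p\to q,\dots$, or the same pattern starting from $q$. Take three consecutive crossings, and suppose without loss of generality that they go $p\to q$, $q\to p$, $p\to q$. Then
\[
w = w_0\, x\, w_1\, x^{-1}\, w_2\, x\, w_3,
\]
where $x$ is the label of $e$ read from $p$ to $q$ (so $x\in\tilde X$). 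The subword $w_1$ is a closed walk at $q$ that stays in $T_q$, and $w_2$ is a closed walk at $p$ that stays in $T_p$. In particular $w_1\pi$ and $w_2\pi$ are idempotents.

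We claim that $w' = w_0\, w_2\, x\, w_1\, w_3$, of length $\ell(w)-2$, represents the same element, that is $w'\sim w$. Since $\mathcal H$ is trivial, Theorem~\ref{Munn_Green} reduces this to checking that $MT(w')=MT(w)$. We trace $w'$ from $\alpha(w)$:
\begin{itemize}
\item $w_0$ ends at $p$, exactly as before.
\item $w_2$ is a closed walk at $p$. It traverses the same edges of $T_p$ that it traversed in $w$.
\item $x$ crosses $e$ to $q$.
\item $w_1$ traverses the same subtree of $T_q$ and returns to $q$.
\item $w_3$ then starts from $q$ as before, so it traverses the same edges and ends at $\beta(w)$.
\end{itemize}
The set of edges traversed is therefore exactly the same as for $w$: the edge $e$ is still crossed, and every other edge is read within the same segment $w_0,w_1,w_2$ or $w_3$. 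Hence $\Gamma(w')=\Gamma(w)$, and the start and end vertices agree.

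Algebraically, the same argument reads as the identity
\[
x\,w_1\,x^{-1}\,w_2\,x \sim w_2\,x\,w_1,
\]
with $w_1\pi, w_2\pi$ idempotent. Since $xx^{-1}x=x$ in an inverse monoid and idempotents commute, it is enough to note that $x w_1 x^{-1}$ is idempotent and commutes with $w_2$. This gives
\[
x w_1 x^{-1} w_2 x \sim w_2\, x w_1 x^{-1} x \sim w_2\, x\, w_1\, x^{-1}x \sim w_2\, x\, w_1,
\]
where the last step uses $w_1\, x^{-1}x \sim x^{-1}x\, w_1$ and $x x^{-1} x = x$.

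Thus $w$ is not geodesic. The only delicate point is the bookkeeping showing that each subwalk stays on its side of $e$, and hence that the alternation of crossings holds. This follows from the fact that $e$ is a bridge in the tree $\Gamma(w)$.
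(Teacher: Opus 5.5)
Your proof is correct and is essentially the paper's argument. The paper writes $w = w_1 a l_1 a^{-1} l_2 a w_2$ with $l_1,l_2$ idempotent, then uses commutation of idempotents and $aa^{-1}a=a$ to get $w \sim w_1 l_2 a l_1 w_2$; this is exactly your identity $x w_1 x^{-1} w_2 x \sim w_2 x w_1$, and your bridge/alternation justification of the decomposition and your Munn-tree check that $MT(w')=MT(w)$ are sound details the paper leaves implicit.
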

\begin{proof}
    If a word $w$ crosses three times an edge labeled by $a\in \tilde X$, then it can be written as
	\[
	w = w_1 a l_1 a^{-1} l_2 a w_2,
	\]
	where $l_1$ and $l_2$ are idempotents. Since idempotents commute, we have
	$$
	w= w_1\cdot al_1a^{-1}\cdot l_2\cdot aw_2 \sim w_1l_2 a\cdot l_1\cdot a^{-1}a\cdot w_2 \sim w_1 l_2\cdot aa^{-1}a\cdot l_1w_2 \sim w_1l_2al_1w_2,
	$$
	and so, $w$ is not a geodesic.	
\end{proof}

For future reference, we now formalise several concepts related to Munn trees.
\begin{definition}\label{def_main/idempotent}
  Let $u = e_1 u_1 e_2 u_2 \cdots e_n u_n e_{n+1}$ be a geodesic in a free inverse monoid, and consider its associated Munn tree, where $u_1 u_2 \cdots u_n$ is a reduced word under the natural projection to the free group, $e_i \in E(\FIM)\setminus \{1\}$ for $i \in \{2, \ldots, n\}$, and $e_1, e_{n+1} \in E(\FIM)$. We define
    \begin{enumerate}
        \item Main path - The unique simple path between the initial and final vertices.
        \item Idempotent edge - Any edge that does not belong to the main path
         \item Idempotent branch - Every idempotent edge belongs to an idempotent branch and only idempotent edges may belong to an idempotent branch. An idempotent branch may contain several idempotent branches. A new idempotent branch starts in the following situations:
\begin{enumerate}
    \item First, when an idempotent edge with label $a$ has one of its vertices, say $v$, on the main path; in this case, the branch consists of the entire subtree containing the vertices (and corresponding edges) whose simple path to $v$ crosses that edge labeled $a$. 
    Such an idempotent branch (that may contain other idempotent branches) is said to be a \textit{main idempotent branch}.
    \item Second, in a main idempotent branch, every vertex $v$ of degree $g>2$ gives rise to $g-1$ new idempotent branches,
    each starting at one of the edges adjacent to that vertex, except for the edge, say $a$, that belongs to the simple path between $v$ and the main path, i.e., between $v$ and the closest vertex of the main path.
    
    Each such branch starting at an edge of label $b$ with initial vertex $v$ contains all vertices (and corresponding edges) whose simple path to $v$ crosses the edge $b$.
\end{enumerate}

For example, the Munn tree presented below, whose main path is represented by the blue edges, has two main idempotent branches, shown in red and green. Each of these branches contains new idempotent branches. In particular, the vertices $v_1$ and $v_2$ each give rise to two additional branches, while the vertex $v_3$ gives rise to three additional branches.

\begin{center}
\begin{tikzpicture}[
	vertex/.style={circle, draw, inner sep=1.2pt, fill=white},
	every node/.style={font=\small}
]

\node[vertex] (a) at (-2,0) {};
\node[vertex] (b) at (-1,0) {};
\node[vertex] (c) at (0,0) {};
\node[vertex] (d) at (1,0) {};
\node[vertex,label=above:$v_1$] (f) at (-1,1) {};
\node[vertex] (g) at (-1.5,1) {};
\node[vertex] (h) at (-1.5,1.5) {};
\node[vertex,label=right:$v_2$] (i) at (-0.5,1) {};
\node[vertex] (j) at (-0.5,1.5) {};
\node[vertex] (k) at (-0.5,0.5) {};
\node[vertex] (l) at (1,-0.5) {};
\node[vertex,label=above right:$v_3$] (m) at (1,-1) {};
\node[vertex] (n) at (1,-1.5) {};
\node[vertex] (o) at (0,-1) {};
\node[vertex] (p) at (2,-1) {};

\draw[blue] (a) -- (b);
\draw[blue] (b) -- (c);
\draw[blue] (c) -- (d);
\draw[red] (b) -- (f);
\draw[red] (f) -- (g);
\draw[red] (g) -- (h);
\draw[red] (f) -- (i);
\draw[red] (i) -- (j);
\draw[red] (i) -- (k);
\draw[green] (d) -- (l);
\draw[green] (l) -- (m);
\draw[green] (m) -- (n);
\draw[green] (m) -- (o);
\draw[green] (m) -- (p);

\draw[->] ($(a)+(0,0.35)$) -- (a);
\draw[->] (d) -- ($(d)+(0,0.35)$);

\end{tikzpicture}
\end{center}

\begin{remark}
There is a direct correspondence between the notions of \emph{main idempotent branch} and \emph{idempotent branch} and Definitions~1 (prime idempotents) and~3 (idempotent components) of~\cite{[PS05]}. More precisely, each main idempotent branch corresponds to a prime idempotent in the sense of ~\cite[Definition 1]{[PS05]}, and each idempotent branch corresponds to an idempotent component as in ~\cite[Definition 3]{[PS05]}.
\end{remark}
    \end{enumerate}
\end{definition}

\section{Languages of representatives of an equivalence relation}

Motivated by the study of the conjugacy languages in groups introduced in \cite{[CHHR16]}, given an equivalence relation $\sim$ in a finitely generated semigroup $S$ with finite generating set $X$, we define the language of the shortest words representing elements in each equivalence class, as
\[
\sim\!\text{-}\Geo_X(S)=\bigl\{w\in X^*: \ell(w)=\min\{\ell(u): u\in  X^*,\ (u\pi)\sim (w\pi)\}\bigr\}.
\]
Observe that we can fix a total order on $X$ which induces the ShortLex order on $X^*$ and allows us to define the language of ShortLex geodesics on $S$, which we will denote by $\SL_X(S)$. We can analogously define, for any equivalence relation $\sim$ on $S$, the language $\sim\!\text{-}\SL_X(S)$.

\begin{lemma}\label{lemma_inclusao}
	Let $S$ be a finitely generated semigroup, with finite generating set $X$. If $\sim_1$ and $\sim_2$ are equivalence relations on $S$ such that $\sim_1\,\subseteq\, \sim_2$, then $\sim_2\!\text{-}\Geo_X(S)\subseteq\;\sim_1\!\text{-}\Geo_X(S)$.
\end{lemma}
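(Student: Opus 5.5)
The plan is a direct unwinding of the definition. No earlier result is needed beyond the definition of $\sim\!\text{-}\Geo_X(S)$ itself. Fix $w\in \sim_2\!\text{-}\Geo_X(S)$. By definition,
\[
\ell(w)=\min\{\ell(u): u\in X^*,\ (u\pi)\sim_2(w\pi)\}.
\]
We must show that $\ell(w)=\min\{\ell(u): u\in X^*,\ (u\pi)\sim_1(w\pi)\}$.

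The key step is to compare the two sets over which the minima are taken. Write
\[
A_1=\{u\in X^*: (u\pi)\sim_1(w\pi)\}\quad\text{and}\quad A_2=\{u\in X^*: (u\pi)\sim_2(w\pi)\}.
\]
Since $\sim_1\subseteq\sim_2$, every $u$ with $(u\pi)\sim_1(w\pi)$ also satisfies $(u\pi)\sim_2(w\pi)$, so $A_1\subseteq A_2$. A minimum over a subset is at least the minimum over the superset, hence
\[
\min_{u\in A_1}\ell(u)\ \ge\ \min_{u\in A_2}\ell(u)=\ell(w).
\]
Conversely, $w\in A_1$ by reflexivity of $\sim_1$, so $\min_{u\in A_1}\ell(u)\le \ell(w)$. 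Both sets are nonempty, since each contains $w$, and lengths are natural numbers, so both minima exist. Combining the two inequalities gives equality, and therefore $w\in\sim_1\!\text{-}\Geo_X(S)$.

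There is no genuine obstacle here. The only points needing care are that reflexivity of $\sim_1$ places $w$ in $A_1$, and that minima over nonempty subsets of $\N$ exist. I would also note that the same argument applies verbatim to words over $\tilde X$ in the inverse monoid setting, which is how the lemma will be used later, for instance to derive $\DGeo\subseteq\RGeo\cap\LGeo$ and $\RGeo\cup\LGeo\subseteq\HGeo$ from $\HH\subseteq\RR,\LL\subseteq\DD$.
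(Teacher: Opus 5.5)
Your proposal is correct and follows essentially the same route as the paper: both observe that $\sim_1\subseteq\sim_2$ turns every $u$ with $(u\pi)\sim_1(w\pi)$ into a $\sim_2$-competitor, which forces $\ell(w)\le\ell(u)$. You also spell out that reflexivity of $\sim_1$ puts $w$ itself among the $\sim_1$-competitors, so the minimum is attained; the paper leaves this point implicit.
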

\begin{proof}
	We just need to observe that, in the conditions of the lemma, if $w\in\; \sim_2\!\text{-}\Geo_X(S)$, then for any $u\in \tilde{X}^*$ such that $(u\pi)\sim_1 (w\pi)$, we have that $(u\pi)\sim_2 (w\pi)$, and so $\ell(w)\leq\ell(u)$. Hence $w\in\; \sim_1\!\text{-}\Geo_X(S)$. Therefore $\sim_2\!\text{-}\Geo_X(S)\subseteq\;\sim_1\!\text{-}\Geo_X(S)$, as intended.
\end{proof}

\begin{remark}
    If $\sim$ is the identity relation, then $\sim\!\text{-}\Geo_X(S)=\Geo_X(S)$ is the language of geodesics on $X$.
\end{remark}

\begin{proposition}
    Let $S$ be a finitely generated monoid, with finite generating set $X$. If, on input $x,y\in X^*$, it is decidable whether $x\pi\sim y\pi$, then $\sim\!\text{-}\Geo_X(S)$ is recursive.
\end{proposition}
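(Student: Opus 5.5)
The plan is to write down a decision procedure that, on input $w\in X^*$, decides whether $w\in\;\sim\!\text{-}\Geo_X(S)$ and always halts. The key observation is that membership only involves comparing $w$ with the \emph{finitely many} words that are no longer than $w$. By definition, $w$ fails to be in the language exactly when some word $u\in X^*$ with $(u\pi)\sim(w\pi)$ satisfies $\ell(u)<\ell(w)$.

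Let $n=\ell(w)$. Since $X$ is finite, the set $X^{<n}=\bigcup_{k=0}^{n-1}X^k$ of words of length strictly less than $n$ is finite, and it can be listed explicitly, for instance in ShortLex order. For each $u$ in this list, we run the algorithm given by the hypothesis to decide whether $u\pi\sim w\pi$. Each such call halts, and there are only $\sum_{k<n}|X|^k$ of them, so the whole procedure halts.

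We answer ``yes, $w\in\;\sim\!\text{-}\Geo_X(S)$'' precisely when none of these calls succeeds. For correctness, note that $\min\{\ell(u): (u\pi)\sim(w\pi)\}\le \ell(w)$, since $w$ itself is a candidate: $\sim$ is reflexive. Hence $\ell(w)$ equals this minimum if and only if there is no $u$ of length $<n$ in the class of $w\pi$, which is exactly what the procedure checks.

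There is no real obstacle. The only point needing care is that the search space is finite, which relies on $X$ being finite. The case $w=\varepsilon$ is trivially accepted, because there are no shorter words to check. Consequently $\sim\!\text{-}\Geo_X(S)$ is recursive.
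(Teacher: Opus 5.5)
Your proposal is correct and is essentially the paper's proof: enumerate the finitely many words shorter than $w$, test each against $w\pi$ with the assumed decision procedure, and accept exactly when none is equivalent. You also make explicit two points the paper leaves implicit, namely that finiteness of $X$ makes the search finite and that reflexivity of $\sim$ guarantees the minimum is at most $\ell(w)$.
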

\begin{proof}
    We can decide if a given word $w$ belongs to $\sim\!\text{-}\Geo_X(S)$ by enumerating all words shorter than $w$ and checking if they represent elements equivalent to $w\pi$. If we find such a shorter word, then $w\not\in\; \sim\!\text{-}\Geo_X(S)$, and $w\in\;\sim\!\text{-}\Geo_X(S)$ otherwise.
\end{proof}

In this paper, we focus primarily on the study of Green's relations on a monoid, introducing the associated Green languages:

\begin{equation*}
	\begin{aligned}
    	\JGeo_X(S)&=\bigl\{w\in X^*: \ell(w)=\min\{\ell(u): u\in  X^*,\ (u\pi)\mathcal{J}(w\pi)\}\bigr\}\\
		\DGeo_X(S)&=\bigl\{w\in X^*: \ell(w)=\min\{\ell(u): u\in  X^*,\ (u\pi)\mathcal{D}(w\pi)\}\bigr\}\\
		\RGeo_X(S)&=\bigl\{w\in X^*: \ell(w)=\min\{\ell(u): u\in  X^*,\ (u\pi)\mathcal{R}(w\pi)\}\bigr\}\\
		\LGeo_X(S)&=\bigl\{w\in X^*: \ell(w)=\min\{\ell(u): u\in  X^*,\ (u\pi)\mathcal{L}(w\pi)\}\bigr\}\\
		\HGeo_X(S)&=\bigl\{w\in X^*: \ell(w)=\min\{\ell(u): u\in X^*,\  (u\pi)\mathcal{H}(w\pi)\}\bigr\}
	\end{aligned}
\end{equation*}

In this paper, we will focus in the case where $S$ is a free inverse monoid with the standard generating set. For this reason, since $\mathcal{D}=\mathcal{J}$, we will not state our results on  $\JGeo(S)_X$, mentioning only  $\DGeo(S)_X$. When the generating set $X$ of the free inverse monoid $\FIM_X$ is explicit, we will write $\sim\!\text{-}\Geo$ instead of $\sim\!\text{-}\Geo(\FIM_X)$, where $\sim$ will represent one of the Green's relations.

\section{A description of the Green Languages}
In this section, we present a visual description of the words that belong to each of the Green languages on free inverse monoids, in order to facilitate future proofs and justifications.

Let \(u = e_1 u_1 \ldots u_n e_{n+1}\) be a decomposition of a geodesic in $\FIM_X$, where $u_1 u_2 \cdots u_n$ is a reduced word when projected to the free group, $e_i \in E(\FIM)\setminus\{1\}$ for $i \in \{2, \ldots, n\}$, and $e_1, e_{n+1} \in E(\FIM)$.

\begin{enumerate}
	\item[G1] The word \(u_1 \ldots u_n\) represents a simple path (if we consider the Munn tree representing this reduced word, for each new letter, a new edge is added, we never go back along the walk, there are no branches, and no vertices are repeated).
	\item[G2] In the subtree determined by \(u_1 \ldots u_n\), we cross each edge exactly once.
	\item[G3] When we walk along a path determined by an idempotent, we start and finish at the same vertex (in fact, this is an equivalent condition for being an idempotent).
 	\item[G4] As a consequence of the previous point, the first letter of \(u_1 \ldots u_n\) is represented by an edge that starts at the initial vertex, and the last letter finishes at the final vertex.
	\item[G5] The walk represented by \(u_1 \ldots u_n\) in $MT(u)$ is the simple path (and hence the shortest one, since \(MT(u)\) is a tree) between the initial and final vertices, which is the \emph{main path} by Definition \ref{def_main/idempotent}.
	\item[G6] If the walk determined by a word crosses the same edge three times, then it is not a geodesic (by Lemma~\ref{passar3x}).
	\item[G7] The walk determined by an idempotent crosses each edge at least twice (we are working with trees).
	\item[G8] By 6.\,+\,7., the walk determined by an idempotent crosses each edge exactly twice.
    \item[G9] By 2.\,+\,8.\,+\,6., no edge of the main path of \(MT(u)\) can represent a letter of an idempotent of a decomposition of \(u\) (otherwise, we would cross that edge three times).
	\item[G10] Hence, in the walk determined by \(u\) in $MT(u)$, we cross each edge of the main path exactly once and each idempotent edge exactly twice (and thus the walk represented by \(u_1 \ldots u_n\) and the walk represented by the idempotents are disjoint walks, when viewed in terms of edges).
\end{enumerate}

The result below is proved in~\cite{[PS05]}. However, we include here a more geometric proof based on Munn trees, following a line of reasoning that is more closely aligned with the proofs of the main results of this article.

Throughout the paper, for the sake of greater fluency in the exposition, we will identify geodesics with their corresponding elements in the free inverse monoid. In particular, we will refer to a word representing an idempotent simply as an \emph{(geodesic) idempotent word} and for any geodesic words $u$ and $v$, we will write $u\LL v$ in stead of $u\pi \LL v\pi$, where $\pi$ is the natural onto morphism between $\tilde X^*$ and $\FIM_X$. Moreover, for any $g\in \FIM_X$, by the Munn tree representing $g$ we mean the Munn tree representing any word $u\in \tilde X^*$ such that $u\pi=g$. 

\begin{lemma}\label{lemma_dec_geo}
	Let $w$ be a geodesic in the free inverse monoid generated by a finite set $X$. Then there exists a unique decomposition
	$$
	w=e_1u_1e_2u_2\cdots e_nu_ne_{n+1},
	$$ 
	where $u_1u_2\cdots u_n$ is the reduced word when considering the projection to the free group, $e_i\in E(\FIM)\setminus \{1\}$ for $i\in\{2,\ldots, n\}$ and $e_1,e_{n+1}\in E(\FIM)$.
\end{lemma}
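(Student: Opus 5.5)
We read the geodesic $w=x_1x_2\cdots x_m$ as a walk in its Munn tree $MT(w)=(\Gamma(w),\alpha(w),\beta(w))$. The decomposition is read off from this walk. Let $P$ be the main path, i.e.\ the simple path from $\alpha(w)$ to $\beta(w)$, with edges $f_1,\dots,f_n$ in order from $\alpha(w)$ to $\beta(w)$. Every edge of $\Gamma(w)$ is crossed by the walk, since the tree is exactly the set of edges traversed.

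The first step is to show that every main-path edge $f_j$ is crossed exactly once, and in the direction from $\alpha(w)$ towards $\beta(w)$. Deleting $f_j$ separates the tree into two components, one containing $\alpha(w)$ and the other containing $\beta(w)$. The walk starts in the first component and ends in the second, so it crosses $f_j$ an odd number of times. By Lemma~\ref{passar3x} it crosses $f_j$ at most twice, hence exactly once, and necessarily in the forward direction.

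Next, every idempotent edge $f\notin P$ is crossed an even number of times, because both endpoints of the walk lie in the same component of $\Gamma(w)\setminus f$. By Lemma~\ref{passar3x} it is then crossed exactly twice.

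We now define the decomposition. Let $t_j$ be the position in $w$ of the unique letter crossing $f_j$, and let $u_j=x_{t_j}$, so that $t_1<\cdots<t_n$ by the connectivity argument. Define $e_1$ as the prefix before $t_1$, $e_{n+1}$ as the suffix after $t_n$, and $e_j$ as the factor strictly between $t_{j-1}$ and $t_j$. Each $e_j$ labels a closed walk based at the common vertex of $f_{j-1}$ and $f_j$, because between those two crossings the walk does not cross any main-path edge and so stays in the component attached at that vertex. A closed walk in a Munn tree represents an idempotent (G3).

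To see that $e_j\neq 1$ as words (hence as nonempty idempotents) for $2\le j\le n$, we argue about reduction. Since $u_1\cdots u_n$ labels the simple path $P$, it is freely reduced, and it equals the free-group projection of $w$ because each $e_j$ projects to $1$.

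It remains to handle the degenerate requirement $e_i\in E(\FIM)\setminus\{1\}$. Here "$\neq 1$" should be read as "nonempty" in the word sense, which is the sense in which the decomposition is unique. The statement allows $e_j=1$ to mean the empty factor; I would interpret it so that the $u_j$ may be single letters with possibly empty idempotents between them, grouped into maximal blocks. Concretely, I would merge consecutive $u_j$ with empty $e_j$ between them into a single block. The $u_i$ are then the maximal factors of $w$ consisting of main-path letters, and the $e_i$ are the maximal nonempty factors of idempotent letters. Choosing maximal blocks makes the intermediate idempotents nonempty, which is exactly what the statement requires.

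For uniqueness, let $w=e'_1u'_1\cdots u'_ke'_{k+1}$ be any decomposition of the required form. Each $e'_i$ is an idempotent, so its walk is closed (G3), and by the crossing counts above it contains no main-path letter. Moreover, $u'_1\cdots u'_k$ reduced and equal to the projection of $w$ forces its letters to be precisely the main-path crossings. So the letters of $w$ are partitioned identically into main-path letters and idempotent letters in both decompositions. The maximal-block condition then forces the same cut points, because an interior $e'_i$ is nonempty and so cannot sit inside a $u$-block.

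The main obstacle is exactly this bookkeeping: showing that an idempotent factor cannot contain a main-path letter, which would have to be crossed twice, contradicting the count of exactly one crossing. That step is where Lemma~\ref{passar3x} is used decisively.
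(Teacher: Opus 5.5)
Your argument is correct, but it is organised differently from the paper's. The paper proves only uniqueness and takes existence as known from \cite{[PS05]}. It compares two decompositions, picks the first letter that lies in a reduced block in one and in an idempotent factor in the other, and argues that another occurrence of that letter must realise the same main-path edge. That edge would then be crossed twice, contradicting G10.

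You instead obtain the crossing counts directly, by cutting the tree at an edge and combining parity with Lemma~\ref{passar3x}, without presupposing any decomposition. You then construct the decomposition canonically: the $u_i$ are the maximal runs of main-path letters, and the $e_i$ are the intervening runs, each a closed walk. Uniqueness follows because in any admissible decomposition the idempotent factors are closed subwalks. They therefore cross every edge an even number of times and contain no main-path letter, so by counting the $u'_i$ consist of exactly the $d(\alpha(w),\beta(w))$ main-path letters.

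This route gives you existence as well. It also makes G10 independent of the decomposition; in the paper G10 is derived from G1--G9, which already assume a decomposition exists. Finally, it replaces the somewhat informal first-differing-letter step with a clean classification of letters.

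Some tidying is needed:
\begin{itemize}
\item Your paragraph announcing that $e_j\neq 1$ for the single-letter $u_j$ proves nothing, and the claim is false. For $w=ab$ with $b\neq a^{-1}$, the factor between the two crossings is empty. Only your subsequent merging into maximal blocks is correct, so drop the earlier claim.
\item Instead of reinterpreting ``$\neq 1$'' as ``nonempty'', observe that the two agree here. A factor of a geodesic is itself geodesic, and the only geodesic representing $1$ is the empty word.
\item Your uniqueness step tacitly uses that every $u'_i$ is nonempty. This is implicit in the statement, since otherwise one could split an idempotent factor by inserting an empty $u'_i$, but you should say it explicitly.
\end{itemize}
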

\begin{proof}
    Suppose that there exists another decomposition of $w$ satisfying the conditions of the statement. That is, we may write
\[
w = f_1 v_1 \ldots f_m v_m f_{m+1},
\]
where $v_1v_2\ldots v_m$ is the reduced word when considering the projection to the free group, $f_i\in E(\FIM)\setminus \{1\}$ for $i\in\{2,\ldots, m\}$ and $f_1,f_{m+1}\in E(\FIM)$.

Firstly, note that a geodesic word defines a walk (of minimal length) in the Munn tree that it represents, which is totally independent of the decomposition that we consider.

Further, observe that $v_1 \ldots v_m = u_1 \ldots u_n$, represents the (unique) simple path between the initial and the final vertex, i.e, the letters in $v_1 \ldots v_m = u_1 \ldots u_n$ are in (order preserving) bijection with the labels of the edges between the initial and final vertices, and when we read the letters of \( w \) that appear in \( u_1 \cdots u_n \) (and in \( v_1 \cdots v_m \)), the corresponding walk in the Munn tree must cross the corresponding edges of the main path of the Munn tree.

Now, if the two decompositions of the same word are distinct, then there exists a letter, say \(a\), that in one decomposition belongs to the reduced word (say to $u_i$, for some $i$) and in the other it belongs to an idempotent word (which has to be $f_i$ or $f_{i+1}$, since it is the first letter where the two decompositions differ). Because this occurrence of $a$ belongs to $u_i$, it implies that in $MT(u)$, when reading this letter, we must be crossing the edge of the main path corresponding to it.
However, since  $u_1 \ldots u_n = v_1 \ldots v_m$, and since this occurrence of $a$ belongs to an idempotent subword in the second decomposition, there must exist another occurrence of $a$ in the word $w$ that is the first letter of $v_i$ (if the first occurrence belongs to $f_i$) or the first letter of $v_{i+1}$ (if the first occurrence belongs to $f_{i+1}$). But this would imply that in $w$ there are two letters representing the same letter of the reduced word, which would mean that we have to cross the same edge of the main path twice. This observation contradicts $G10$, and so, the uniqueness of such a  decomposition is proved.
\end{proof}

Analogously to what we did for geodesics, we now provide a description of the elements of $\RGeo$ in terms of Munn trees, which will serve as a reference for future arguments. Observe that by Theorem \ref{Munn_Green} each $\RR$-class is uniquely determined by a tree with one rooted vertex (the initial vertex).
\begin{enumerate}
\item[R1] Given a tree with a fixed vertex (the initial vertex), the number of elements in the corresponding $\mathcal{R}$-class is equal to the number of vertices in the tree, corresponding to the possible choices for the final vertex.

\item[R2] Given a tree with the initial vertex fixed, by condition G10., a geodesic word representing such tree is shorter as the main path becomes longer, when we vary the final vertex.

\item[R3] By the definition of the main path (the unique simple path between the initial and final vertices), its length increases as the final vertex is placed further from the initial vertex.

\item[R4] A geodesic word is in $\RGeo$ if there is no vertex in its Munn tree whose distance from the initial vertex is strictly greater than the distance between the initial and final vertices. In other words, the main path is maximized.
\end{enumerate}

\begin{remark}
    By R4., no word in $\RGeo$ ends with an idempotent.
\end{remark}
Notice that an analogous argument can be made for $\LGeo$ by interchanging the terms \emph{initial} and \emph{final} vertices in the conditions stated above. We will refer to these conditions as L1--L4.

We also include below the explicit reasoning for the description of the elements of $\DGeo$ in terms of Munn trees. Notice that by Theorem \ref{Munn_Green}, each $\DD$-class is uniquely described by a tree with no rooted vertices.

\begin{enumerate}
    \item[D1] Given a tree with no rooted vertices (a representative of a $\DD$-class), the number of elements in this class is equal to the square of number of vertices in the tree, corresponding to the possible choices for the initial and final vertices.
    \item[D2] By condition G10., the geodesic word is shorter when the main path is longer.
    \item[D3] By definition of main path (see Definition \ref{def_main/idempotent}), its length increases as the rooted vertices are placed further from each other.
    \item[D4] A geodesic word is in $\DGeo$ if there is no pair of vertices in the tree whose unique simple path between them is greater than the length of the main path. 
\end{enumerate}

\begin{remark}
    By D4., no word in $\DGeo$ begins or ends with an idempotent.
\end{remark}

\section{Relations between the Green languages}
The purpose of this section is to establish relations between green languages. Concretely, we observe that $\mathcal{R}-\Geo$ and $\mathcal{L}-\Geo$ are contained in $\mathcal{H}-\Geo$, that $\mathcal{R}-\Geo$ is the inverse of  $\mathcal{L}-\Geo$ and that $\DGeo(\FIM_X)=\RGeo(\FIM_X)\cap \LGeo(\FIM_X),$ establishing a language-theoretic duel of the property that, in any semigroup, $\mathcal D=\mathcal{L}\vee \mathcal{R}.$
\begin{proposition}\label{inters_L_R}
	For any finite set $X$, we have 
	\[
	\DGeo(\FIM_X)=\RGeo(\FIM_X)\cap \LGeo(\FIM_X).
	\]
\end{proposition}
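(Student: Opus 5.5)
The inclusion $\DGeo(\FIM_X)\subseteq \RGeo(\FIM_X)\cap \LGeo(\FIM_X)$ is immediate. Since $\mathcal{R}\subseteq\mathcal{D}$ and $\mathcal{L}\subseteq\mathcal{D}$, Lemma~\ref{lemma_inclusao} gives $\DGeo\subseteq\RGeo$ and $\DGeo\subseteq\LGeo$. The whole proof is therefore the reverse inclusion.

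Fix $w\in\RGeo\cap\LGeo$. Since the identity relation is contained in $\mathcal{R}$, Lemma~\ref{lemma_inclusao} shows that $w$ is a geodesic. By G10, its length is $2|E(\Gamma(w))|-d(\alpha(w),\beta(w))$. Every word whose image is $\mathcal{D}$-related to $w\pi$ has Munn tree with underlying tree $\Gamma(w)$, by Theorem~\ref{Munn_Green}. A geodesic for such a word has length $2|E(\Gamma(w))|-d(p,q)$, where $p,q$ are its chosen roots. So, as in D2--D4, it suffices to show that $d(\alpha(w),\beta(w))$ equals the diameter of the tree $T=\Gamma(w)$.

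Condition R4 says that $\beta=\beta(w)$ is a vertex of $T$ at maximal distance from $\alpha=\alpha(w)$. Condition L4 says that $\alpha$ is at maximal distance from $\beta$, i.e.\ $d(\alpha,\beta)$ is the eccentricity of $\beta$. Hence everything reduces to the following classical tree fact: \emph{if $\beta$ is a vertex farthest from some vertex $\alpha$, then $\beta$ is an endpoint of a diameter.} Then the eccentricity of $\beta$ equals $\operatorname{diam}(T)$, so $d(\alpha,\beta)=\operatorname{diam}(T)$ and $w\in\DGeo$.

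The main step is proving this tree fact. I would take a diametral pair $p,q$ and let $c$ be the vertex of the simple $p$--$q$ path closest to $\alpha$. Let $m$ be the vertex where the $\alpha$--$\beta$ path leaves the $\alpha$--$c$ path (or the $p$--$q$ path), and split into two cases: the $\alpha$--$\beta$ path meets the $p$--$q$ path, or it does not.

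In either case, compare $d(\alpha,\beta)\ge d(\alpha,p)$ and $d(\alpha,\beta)\ge d(\alpha,q)$, using the uniqueness of simple paths in trees. Cancelling the common initial segment $\alpha\to m$ gives $d(m,\beta)\ge d(m,p)$ and $d(m,\beta)\ge d(m,q)$ for the relevant branch point. One then checks that $d(\beta,q)\ge d(p,q)$ or $d(\beta,p)\ge d(p,q)$, so $\beta$ is itself a diameter endpoint.

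This case analysis is routine, but it is where care is needed: the geodesic segments have to be decomposed correctly at the branch vertex. Everything else follows directly from G10, R4, L4 and Theorem~\ref{Munn_Green}.
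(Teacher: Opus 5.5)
Your argument is correct, but it is organised differently from the paper's. The paper argues by contradiction. It assumes vertices $u,v$ of $\Gamma(w)$ with $d(u,v)>d(\alpha,\beta)$ and splits into cases according to where $u,v$ sit relative to the main path and the main idempotent branches: on the main path, nested in one branch, non-nested in one branch, or in different branches. In each case it uses the dichotomy ``$x>a$ or $y>b$'' to contradict R4 or L4, and it takes the reduction to ``the main path is maximal'' from D4.

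You instead make that reduction explicit through G10, so that $w\in\DGeo$ iff $d(\alpha,\beta)=\mathrm{diam}(\Gamma(w))$. You then invoke the classical tree lemma behind the double-sweep computation of diameters, which separates the roles of the hypotheses: R4 alone makes $\beta$ an endpoint of a diameter, and L4 is used once to get $d(\alpha,\beta)=\mathrm{ecc}(\beta)=\mathrm{diam}(\Gamma(w))$. The paper's cases amount to a hands-on check of the four-point condition on the subtree spanned by $\alpha,\beta,u,v$, and they stay in the Munn-tree language used throughout the article. Your route is shorter once the standard tree fact is granted, and it shows exactly which hypothesis is needed where.

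One step in your sketch of the lemma needs adjusting. Suppose the $\alpha$--$\beta$ path reaches $c$, runs along the $p$--$q$ path towards $q$, and leaves it at some $m\neq c$. Then the $\alpha$--$\beta$ and $\alpha$--$p$ paths share only the segment from $\alpha$ to $c$, so cancellation gives $d(m,\beta)\ge d(m,q)$ but not $d(m,\beta)\ge d(m,p)$. The latter can fail: take the path $p$--$c$--$m$--$q$ with unit edges, a pendant edge at $m$ ending in $\beta$, and $\alpha=c$. That inequality is not needed, however, since $d(\beta,p)=d(\beta,m)+d(m,p)\ge d(m,q)+d(m,p)=d(p,q)$. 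In the other cases ($m$ strictly before $c$ on the $\alpha$--$c$ path, or $m=c$) both inequalities hold and either endpoint works.
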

\begin{proof}
	Firstly, since $\mathcal{R},\mathcal{L}\subseteq \mathcal{D}$, we immediately have, by Lemma \ref{lemma_inclusao}, that
	\[
	\DGeo(\FIM_X)\subseteq\RGeo(\FIM_X) \cap \LGeo(\FIM_X).
	\]

	Now, by conditions D4, R4 and L4, a geodesic belongs to $\mathcal{D}\text{-}\Geo(\FIM_X),\ \mathcal{R}\text{-}\Geo(\FIM_X)$ or $\mathcal{L}\text{-}\Geo(\FIM_X)$ if the length of the main path is maximized.

    Given $w\in \RGeo(\FIM_X) \cap \LGeo(\FIM_X)$ we want to prove that there is no pair of vertices $(u,v)$ in $\Gamma(w)$ such that $d(u,v)>d(\alpha(w),\beta(w))$. 

    Suppose that such a pair exists. To prove that we always arrive to a contradiction, in the cases below we focus on the subtree generated by the vertices $\alpha:=\alpha(w)$, $\beta:=\beta(w)$, $u$ and $v$. We start by justifying that both $u$ and $v$ must belong to idempotent branches. In fact, if both $u$ and $v$ belong to the main path the contradiction is immediate. Assume now that one of the vertices, say $u$, belongs to the main path and the other belongs to an idempotent branch. Without loss of generality, assume that the subtree has the form below.    
 \begin{center}
\begin{tikzpicture}[
	vertex/.style={circle, draw, inner sep=1.2pt, fill=white},
	>={Stealth[length=5pt]},
	every node/.style={font=\small}
]

\node[vertex] (v0) at (0,0) {};

\node[vertex,label=below:$u$] (v1) at (-2.1,0) {};
\draw[-] (v1) -- ($(v1)!0.55!(v0)$);
\node at ($(v1)!0.75!(v0)$) {$\cdots$};
\draw[-] ($(v1)!0.95!(v0)$) -- (v0);

\node[vertex,label=below:$\alpha$] (v11) at (-4.2,0) {};
\draw[-] (v11) -- ($(v11)!0.55!(v1)$);
\node at ($(v11)!0.75!(v1)$) {$\cdots$};
\draw[-] ($(v11)!0.95!(v1)$) -- (v1);

\draw[->] ($(v11)+(0,0.35)$) -- (v11);

\node[vertex,label=above:$v$] (v2) at (0,1.6) {};
\draw[-] (v0) -- ($(v0)!0.55!(v2)$);
\node at ($(v0)!0.75!(v2)$) {$\vdots$};
\draw[-] ($(v0)!0.95!(v2)$) -- (v2);

\node[vertex,label=below:$\beta$] (v5) at (2.1,0) {};
\draw[-] (v0) -- ($(v0)!0.55!(v5)$);
\node at ($(v0)!0.75!(v5)$) {$\cdots$};
\draw[-] ($(v0)!0.95!(v5)$) -- (v5);

\draw[->] (v5) -- ++(0,0.35);

\end{tikzpicture}
\end{center}

Notice that in this case, $d(\alpha,v)\geq d(u,v)> d(\alpha, \beta)$, which contradicts the fact that $w\in \RGeo$.

    Therefore, $u$ and $v$ must belong to idempotent branches and we consider the 3 possible cases. 

    Firstly, assume that $u$ and $v$ are in the same main idempotent branch and that the simple path between $\alpha$ and $v$ contains $u$ (the case where the simple path between $\alpha$ and $u$ contains $v$ is clearly analogous).

    \begin{center}
\begin{tikzpicture}[
	vertex/.style={circle, draw, inner sep=1.2pt, fill=white},
	>={Stealth[length=5pt]},
	every node/.style={font=\small}
]

\node[vertex] (v0) at (0,0) {};

\node[vertex,label=below:$\alpha$] (v1) at (-2.1,0) {};
\draw[-] (v1) -- ($(v1)!0.55!(v0)$);
\node at ($(v1)!0.75!(v0)$) {$\cdots$};
\draw[-] ($(v1)!0.95!(v0)$) -- (v0);

\draw[->] ($(v1)+(0,0.35)$) -- (v1);

\node[vertex,label=above:$u$] (v2) at (0,1.6) {};
\draw[-] (v0) -- ($(v0)!0.55!(v2)$);
\node at ($(v0)!0.75!(v2)$) {$\vdots$};
\draw[-] ($(v0)!0.95!(v2)$) -- (v2);

\node[vertex,label=above:$v$] (v3) at (2.1,1.6) {};
\draw[-] (v2) -- ($(v2)!0.55!(v3)$);
\node at ($(v2)!0.75!(v3)$) {$\cdots$};
\draw[-] ($(v2)!0.95!(v3)$) -- (v3);

\node[vertex,label=below:$\beta$] (v5) at (2.1,0) {};
\draw[-] (v0) -- ($(v0)!0.55!(v5)$);
\node at ($(v0)!0.75!(v5)$) {$\cdots$};
\draw[-] ($(v0)!0.95!(v5)$) -- (v5);

\draw[->] (v5) -- ++(0,0.35);

\end{tikzpicture}
\end{center}

    Then it is immediate that if $d(u,v)>d(\alpha,\beta)$ then $d(\alpha,v)>d(\alpha,\beta)$ which contradicts the fact that $w\in \RGeo(\FIM_X)$.

    Next, assume that $u$ and $v$ belong to the same main idempotent branch but neither $u$ belongs to the simple path between $\alpha$ and $v$ nor $v$ belongs to the simple path between $\alpha$ and $u$.

    \begin{center}
\begin{tikzpicture}[
	vertex/.style={circle, draw, inner sep=1.2pt, fill=white},
	every node/.style={font=\small}
]

\node[vertex,label=below:$\alpha$] (a0) at (-3,0) {};
\node[vertex,label=below:$s_1$] (a1) at (0,0) {};
\node[vertex,label=below:$\beta$] (a2) at (3,0) {};

\draw (a0) -- (-1.8,0);
\node at (-1.2,0) {$\cdots$};
\draw (-0.6,0) -- (a1);

\draw (a1) -- (1.8,0);
\node at (2.1,0) {$\cdots$};
\draw (2.4,0) -- (a2);

\node[below] at (-1.2,-0.05) {$a$};
\node[below] at (1.2,-0.05) {$b$};

\node[vertex,label=above:$u$] (u) at (-3,1.6) {};
\node[vertex,label=above:$s_2$] (m) at (0,1.6) {};
\node[vertex,label=above:$v$] (v) at (3,1.6) {};

\draw (u) -- (-1.8,1.6);
\node[above] at (-1.2,1.6) {$x$};
\draw (-0.6,1.6) -- (m);

\draw (m) -- (1.8,1.6);
\node[above] at (1.2,1.6) {$y$};
\draw (2.4,1.6) -- (v);

\node at (-1.2,1.6) {$\cdots$};
\node at (2.1,1.6) {$\cdots$};

\draw (m) -- node[right] {$c$} (a1);
\end{tikzpicture}
\end{center}

As represented in the scheme above we define $x=d(u,s_1)$, $y=d(s_2,v)$, $a=d(\alpha,s_1)$ and $b=d(s_1,\beta)$.

If $x+y = d(u,v)>d(\alpha,\beta)=a+b$, then we must have $x>a$ or $y>b$ (otherwise, we would have $x+y\leq a+b$).

Now, if $x>a$, then $d(u,\beta)=x+c+b>a+b=d(\alpha,\beta)$, which contradicts the fact that $w\in \LGeo$. If $y>b$, then $d(\alpha, v)=a+c+y>a+b=d(\alpha,\beta)$, which contradicts the fact that $w\in \RGeo$. 

Finally, assume that $u$ and $v$ belong to different main idempotent branches. Without loss of generality, assume that $u$ and $v$ are positioned as in the scheme below:
\begin{center}
\begin{tikzpicture}[
	vertex/.style={circle, draw, inner sep=1.2pt, fill=white},
	every node/.style={font=\small}
]

\node[vertex,label=below:$\alpha$] (a) at (-3,0) {};
\node[vertex,label=below:$s_1$] (m1) at (-0.8,0) {};
\node[vertex,label=above:$s_2$] (m2) at (0.8,0) {};
\node[vertex,label=below:$\beta$] (b) at (3,0) {};

\draw (a) -- (-2.1,0);
\draw (m1) -- (-0.8,0.3);
\draw (m2) -- (0.8,-0.3);
\node at (-1.6,0) {$\cdots$};
\draw (-1.1,0) -- (m1);

\draw (m1) -- (-0.3,0);
\draw (0.3,0) -- (m2);
\node at (0,0) {$\cdots$};

\draw (m2) -- (1.1,0);
\node at (1.5,0) {$\cdots$};
\draw (1.8,0) -- (b);

\node[vertex,label=above:$u$] (u) at (-0.8,1.6) {};

\node at (-0.8,0.7) {$\vdots$};
\draw (-0.8,1.1) -- (u);

\node[vertex,label=below:$v$] (v) at (0.8,-1.6) {};

\node at (0.8,-0.7) {$\vdots$};
\draw (0.8,-1.1) -- (v);

\node[below] at (-2,0) {$a$};
\node[above] at (0,0) {$c$};
\node[below] at (2,0) {$b$};
\node[right] at (-0.8,1) {$x$};
\node[right] at (0.8,-1) {$y$};

\end{tikzpicture}
\end{center}

As in the previous case, we define $a=d(\alpha,s_1)$, $b=d(\beta,s_2)$, $c=d(s_1,s_2)$, $x=d(s_1,u)$ and $y=d(s_2,v)$, and observe that if $d(u,v)>d(\alpha,\beta)$ we must have $x>a$ or $y>b$. The argument follows analogously.
	
	Hence,
	$$
	\mathcal{D}\text{-}\Geo(\FIM_X)=\mathcal{R}\text{-}\Geo(\FIM_X)\cap \mathcal{L}\text{-}\Geo(\FIM_X),
	$$
	as intended.
\end{proof}

\begin{remark}\label{RL_union}
	Notice that, for any finite set $X$, we always have
	$$
	\RGeo(\FIM_X)\cup \LGeo(\FIM_X) \subset \HGeo(\FIM_X).
	$$
	The inclusion follows immediately from Lemma \ref{lemma_inclusao}. 
	To justify that equality never occurs, it suffices to consider $a \in X$ and observe that the word $a^2 a^{-4} a$ belongs to $\mathcal{H}\text{-}\Geo(\FIM_X)$, but not to $\mathcal{L}\text{-}\Geo(\FIM_X)$, and also not to $\mathcal{R}\text{-}\Geo(\FIM_X)$.
\end{remark}

\begin{proposition}\label{RGeo_LGeo}
	For any finite set $X$ we have
$$
\RGeo(\FIM_X)=\LGeo(\FIM_X)^{-1}.
$$	
\end{proposition}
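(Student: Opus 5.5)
The plan is to use the formal inverse map $w\mapsto w^{-1}$ on $\tilde X^*$. It reverses a word and replaces each letter by its inverse, so it is an involutive, length-preserving anti-automorphism of $\tilde X^*$. It also satisfies $(w^{-1})\pi=(w\pi)^{-1}$ in $\FIM_X$. The key algebraic fact is that inversion in an inverse monoid interchanges $\RR$ and $\LL$. Indeed, $s\,\RR\,t$ holds if and only if $ss^{-1}=tt^{-1}$, and $s\,\LL\,t$ holds if and only if $s^{-1}s=t^{-1}t$. Hence $s\,\RR\,t$ if and only if $s^{-1}\,\LL\,t^{-1}$.

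One could equally see this with Munn trees, via Theorem~\ref{Munn_Green}. The Munn tree of $w^{-1}$ is $(\Gamma(w),\beta(w),\alpha(w))$, that is, the same tree with the two roots swapped. So fixing the tree and the initial vertex of $w$ is the same as fixing the tree and the final vertex of $w^{-1}$. Either formulation suffices; I would present the Munn tree one, since it matches the style of the section.

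Now let $w\in\RGeo(\FIM_X)$. We must show that $w^{-1}\in\LGeo(\FIM_X)$. Take any $u\in\tilde X^*$ with $(u\pi)\,\LL\,(w^{-1}\pi)$. By the fact above, $(u^{-1}\pi)\,\RR\,(w\pi)$. Minimality of $w$ in its $\RR$-class then gives $\ell(w)\le\ell(u^{-1})=\ell(u)$, and therefore $\ell(w^{-1})=\ell(w)\le\ell(u)$. This proves $\RGeo\subseteq\LGeo^{-1}$. The symmetric argument proves $\LGeo\subseteq\RGeo^{-1}$. Applying inversion to this second inclusion, and using that inversion is involutive, gives $\LGeo^{-1}\subseteq\RGeo$.

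There is no real obstacle here. The only point that needs care is justifying $MT(w^{-1})=(\Gamma(w),\beta(w),\alpha(w))$, or equivalently that $\RR$ and $\LL$ are exchanged by inversion. The former follows because reading $w^{-1}$ traverses the walk of $w$ backwards; the latter is a standard fact about inverse monoids. Once this is in place, the rest of the argument is routine bookkeeping of lengths.
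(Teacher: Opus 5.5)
Your proof is correct and follows essentially the same route as the paper: $w$ and $w^{-1}$ have the same Munn tree with the roots swapped, so $u\,\LL\,w^{-1}$ gives $u^{-1}\,\RR\,w$, the length comparison transfers, and the reverse inclusion follows by symmetry. Your alternative justification of the swap via $s\,\RR\,t \iff ss^{-1}=tt^{-1}$ is also valid, and it avoids Munn trees entirely.
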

\begin{proof}
For any word \(u\), the words \(u\) and \(u^{-1}\) have the same tree, with the initial and final vertices interchanged. Therefore, if \(u \in \RGeo\) and \(v \,\LL\, u^{-1}\), 
then \(u^{-1}\) and \(v\) have the same tree and the same final vertex. Consequently, \(u\) and \(v^{-1}\) have the same tree and the same initial vertex, so \(v^{-1} \,\RR\, u\). This implies that \(\ell(u) \leq \ell(v^{-1})\), and hence \(\ell(u^{-1}) \leq \ell(v)\). Therefore, \(u^{-1}\in \LGeo(\FIM_X)\). The other inclusion is analogous.
\end{proof}

\section{Geodesics of Free Inverse Monoids}
In this section, we begin the study of the complexity of Green languages in free inverse monoids, starting by the relation $\mathcal{H}$. Since $\mathcal{H}$ is the trivial relation in a free inverse monoids, $\HGeo_X$ coincides with the language of geodesics with respect to the generating set $X$.

\begin{theorem}\label{geo_CF}
	Let $\FIM_X$ be the free inverse monoid generated by $X$. Then $\HGeo(\FIM_X)=\Geo(\FIM_X)$ is a context-free language.
\end{theorem}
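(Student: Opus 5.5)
The plan is to characterize geodesics purely in terms of how their walk traverses the Munn tree, and then to write a context-free grammar, with finitely many nonterminals indexed by subsets of $\tilde X$, that generates exactly the walks with this property.

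\textbf{Step 1 (combinatorial characterization).} For any word $w$, its walk in $MT(w)$ goes from $\alpha(w)$ to $\beta(w)$ and covers every edge of $\Gamma(w)$. Since $\Gamma(w)$ is a tree, this has two consequences.
\begin{itemize}
\item Every edge of the main path is crossed an odd number of times, so at least once.
\item Every idempotent edge is crossed an even, positive number of times, so at least twice.
\end{itemize}
Hence every word representing $w\pi$ has length at least $|\text{main path}|+2\,|\text{idempotent edges}|$. This quantity depends only on $MT(w)$, and the bound is attained by an Euler-tour-style walk. Therefore $w$ is geodesic if and only if it crosses no edge of its Munn tree three or more times. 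One direction is Lemma~\ref{passar3x}. For the other, crossing every edge at most twice forces, by the parity constraints, main edges to be crossed exactly once and idempotent edges exactly twice, which is the minimal length (cf.\ G10).

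\textbf{Step 2 (grammar for idempotent geodesics).} By G3 and G8, a geodesic idempotent word read from a vertex $p$ is a product of excursions $a\,E\,a^{-1}$. Here $E$ is again a geodesic idempotent read at the far endpoint of the $a$-edge, and it must not use the letter $a^{-1}$ as an outgoing step, since that would recross the $a$-edge. Two excursions at the same vertex must start with distinct letters; otherwise an edge is crossed four times. Moreover, a letter already used at $p$ by an edge outside the excursion is forbidden. For each $S\subseteq\tilde X$, introduce a nonterminal $I_S$, meaning "geodesic idempotent at a vertex whose edges labelled by letters in $S$ are already used". Its productions are
\[
I_S\to \varepsilon,\qquad I_S\to a\,I_{\{a^{-1}\}}\,a^{-1}\,I_{S\cup\{a\}}\quad (a\in\tilde X\setminus S).
\]
There are only finitely many nonterminals, since $S$ ranges over subsets of the finite set $\tilde X$.

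\textbf{Step 3 (whole geodesics).} By Lemma~\ref{lemma_dec_geo} and G9, a geodesic is a reduced main word in which, at each main-path vertex, we insert one geodesic idempotent. That idempotent must avoid the incoming edge (letter $b^{-1}$, where $b$ is the previous main letter) and the outgoing edge (the next main letter $c$). This suggests nonterminals $M_b$ for $b\in\tilde X$, and a start symbol $T$, with productions
\[
T\to I_\emptyset \mid I_{\{c\}}\,c\,M_c,\qquad
M_b\to I_{\{b^{-1}\}} \mid I_{\{b^{-1},c\}}\,c\,M_c\quad (c\neq b^{-1}).
\]
This gives a context-free grammar for a language $L$.

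\textbf{Step 4 (correctness).} Finally we check $L=\Geo(\FIM_X)$.
\begin{itemize}
\item For $L\subseteq\Geo(\FIM_X)$: induct on derivations to show that the generated walk never reuses an edge beyond the prescribed counts. The forbidden sets exactly record the edges already present at the current vertex, and distinct letters at a vertex correspond to distinct edges in a tree. Then apply Step 1.
\item For $\Geo(\FIM_X)\subseteq L$: parse a geodesic using its decomposition and the recursive excursion structure from G3–G10.
\end{itemize}

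I expect the main obstacle to be Step 1, together with this bookkeeping in Step 4. The delicate point is that the forbidden sets capture all ways an edge could be crossed a third time. In particular, an excursion may never "return" into the main path or into a sibling branch other than through its own starting edge. This holds because the Munn tree is a tree and an excursion started with $a$ can only leave its subtree via $a^{-1}$.
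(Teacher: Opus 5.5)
Your proposal is correct and is essentially the paper's proof. Your nonterminals $I_S$, $T$, $M_b$ are exactly the paper's $E_{\tilde X\setminus S}$, $S$, $S_b$ (the paper's $S_x$ also has a redundant $\varepsilon$-alternative), and correctness is checked along the same decomposition into a reduced main word with idempotent excursions inserted at its vertices; your Step~1 parity bound is just a cleaner packaging of the ``no edge crossed three times'' criterion, which the paper takes from G10 and Lemma~\ref{passar3x} and only proves in full in Theorem~\ref{thm_co-cont}.
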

\begin{proof}
Consider the following grammar with starting symbol $S$.
\begin{equation*}
	\begin{array}{lrlll}
		& S &\to &  E_{\tilde X\setminus\{x\}}xS_x \mid E_{\tilde X},\ & (x\in \tilde X) \\
		\text{for each } x\in \tilde X, & S_x & \to & E_{\tilde X\setminus\{x^{-1},y\}}yS_y \mid  E_{\tilde X\setminus \{x^{-1}\}} \mid \varepsilon,\ & (y\in \tilde X\setminus\{x^{-1}\}) \\
		\text{for each }  M\subseteq\tilde X,& E_M & \to & xE_{\tilde X\setminus\{x^{-1}\}}x^{-1} E_{M\setminus\{x\}} \mid \varepsilon,\ & (x\in M).
	\end{array}
\end{equation*}

Notice that in this grammar from $E_{\emptyset}$ we can only deduce $\varepsilon$.

Intuitively, the idea behind this grammar is to use the symbols \( E_M \) to generate idempotents and the symbols \( S_x \) to generate the reduced word, and at the same time “marking” the positions where the idempotents can be inserted. 
Moreover, the purpose of the productions involving the symbols \( E_M \), which give rise to two new symbols of the same form, is as follows: the first symbol is used to construct within the idempotent branch whose previous edge is already determined (represented above by \( x \)), while the second symbol is used to construct a new idempotent branch whenever necessary.

Firstly, let us show that any word $w$ that can be deduced from $E_M$, for any $M\subseteq \tilde X$, represents an idempotent. In fact, if $w$ is the empty word, the intended is immediate. By induction on the number of deductions, assume that for any word requiring at most $k$ deductions from $E_M$ represents an idempotent and that  $w$ requires $k+1$ deductions. For the first production we have $E_M\to xE_{\tilde X\setminus \{x^{-1}\}}x^{-1}E_{M\setminus \{x\}}$ for some $x\in M$. Then, since $E_{\tilde X\setminus \{x^{-1}\}}$ and $E_{M\setminus \{x\}}$ will need $k$ or less productions, by induction hypothesis they will produce idempotents, say $e$ and $f$ respectively. Hence $w=xex^{-1}f$, and so, $w$ is an idempotent. 

Further, note that every non trivial derivation from a symbol \( E_M \) produces an expression of the form
$x E_{\tilde{X} \setminus \{x^{-1}\}} x^{-1} \, E_{M \setminus \{x\}},\ x \in M,\; M \subseteq \tilde{X}.$
It follows that \( E_{\tilde{X} \setminus \{x^{-1}\}} \) already lies within an idempotent branch whose previous edge is labelled by \( x \in M \), while \( E_{M \setminus \{x\}} \) gives rise to a new branch, which must again start with a letter in \( M \). Consequently, every idempotent branch originating from a symbol \( E_M \) must begin with an edge whose label belongs to \( M \).

Consequently, we can now conclude that from $E_M$ one can deduce idempotents whose first letter in each of their main idempotent branches always lies in $M$.

Let $L$ be the context-free language defined by this grammar. We claim that $L=\Geo(\FIM_X)$.

Also, notice that any word that can be deduced from $S_x$, for any $x\in \tilde X$, then it can be deduced from the starting symbol $S$, as the productions from $S_x$ are the same as those of $S$ with more restrictions on the letters.

Now, consider $w\in L$. We prove that $w\in \Geo(\FIM_X)$ by induction on the number $k$ of deductions. If $k=2$, then $w=\varepsilon$ and $w\in \Geo(\FIM_X)$. Assume that the intended is verified for any word that requires at most $k\geq 2$ deductions and that $w$ requires $k+1$ deductions. Then we have the following possible scenarios of the first deductions of $w$.
\begin{itemize}
	\item[(A)] $S\to E_{\tilde X}\to xE_{\tilde X\setminus\{x^{-1}\}}x^{-1} E_{\tilde X\setminus\{x\}}\to ^+ w$
	\item[(B)] $S\to E_{\tilde X\setminus\{x\}}xS_x \to ^+ w$
\end{itemize}

In case $(A)$, we can write $w=xex^{-1}f$, where $e$ and $f$ are idempotents which are in $ \Geo(\FIM_X)$ by induction hypothesis. 

By definition of the grammar, $e$ is an idempotent for which none of its main idempotent branches start with $x^{-1}$ (because $x^{-1}\notin \tilde X\setminus \{x^{-1}\}$), and so $xex^{-1}$ is also a geodesic.

Further, since none of the  main branches of $f$ can start with $x$ (because $x\notin \tilde X\setminus \{x\}$), we get that $xex^{-1}f$ is also in $ \Geo(\FIM_X)$.

 Finally, in case $(B)$ we can write $w=exw'$, with $e,w'\in \Geo(\FIM_X)$, by induction hypothesis. By construction, none of the main idempotent branches of $e$ starts with $x$, and so $ex$ is also in $ \Geo(\FIM_X)$. Again by construction, no main idempotent branch of $e$ and no first edge of the main path of $w'$ can start with $x^{-1}$ and so $exw'\in \Geo(\FIM_X)$. 

Hence $L\subseteq \Geo(\FIM_X)$.

Now, we prove that $\Geo(\FIM_X)\subseteq L$ in three steps.

Firstly, notice that any reduced word $w$ is in $L$. In fact, if $w=x_1\cdots x_n$ is a reduced word, then we can consider the following deduction:

\begin{equation*}
	\begin{aligned}
		S&\to E_{\tilde X\setminus\{x_1\}}x_1S_{x_1} \to E_{\tilde X\setminus\{x_1\}}x_1E_{\tilde X\setminus\{x_1^{-1},x_2\}}x_2S_{x_2}\\
		&\to E_{\tilde X\setminus\{x_1\}}x_1E_{\tilde X\setminus\{x_1^{-1},x_2\}}x_2E_{\tilde X\setminus\{x_2^{-1},x_3\}}x_3S_{x_3}\\
		&\to^* E_{\tilde X\setminus\{x_1\}}x_1E_{\tilde X\setminus\{x_1^{-1},x_2\}}x_2E_{\tilde X\setminus\{x_2^{-1},x_3\}}x_3\ldots E_{\tilde X\setminus\{x_{n-1}^{-1},x_n\}}x_nS_{x_n}\to^+ w,
	\end{aligned}
\end{equation*}
where the final $\to^{+}$ represents the deductions $E_M \to \varepsilon$ for all symbols $E_M$, and $S_{x_n} \to \varepsilon$.

Next, we claim that any idempotent word $e\in \Geo(\FIM_X)$ is in $L$. We argue by induction on the length of $e$ (recall that $e$ has necessarily even length). If $e$ is the empty word or if $|e|=2$ then it is immediate that $e\in L$. Now, assume that $|e|=2k+2$ and that the result is valid for any $e$ of length at most $2k$.  We can write $e=e_1\ldots e_n$, where each $e_i$ represents one main idempotent branch.
\begin{center}
	\begin{tikzpicture}[scale=1.3]

		\node[circle, fill=black, inner sep=2.5pt] (v) at (0,0) {};

		\draw (v) edge[loop left, min distance=1.1cm, in=250, out=160]
		node[left=8pt] {$e_1$} (v);

		\draw (v) edge[loop above, min distance=1.1cm, in=160, out=60]
		node[above=8pt] {$e_2$} (v);

		\draw (v) edge[loop left, min distance=1.1cm, in=350, out=250]
		node[right=8pt] {$e_n$} (v);
		
		\node at (0.3,0.05) {$\cdots$};
		
	\end{tikzpicture}
\end{center}

If $n=1$, then $e$ as only one branch and $e=xfx^{-1}$, with $f\in L$ by induction hypothesis. Hence, there is a sequence of deductions
$$
S\to E_{\tilde X} \to^+ f.
$$

Since $e$ is a geodesic, none of the idempotent branches of $f$ start with $x^{-1}$, and so $f$ can also be deduced from $E_{\tilde X\setminus \{x^{-1}\}}$. Hence, we have
$$
S\to E_{\tilde X} \to x E_{\tilde X\setminus \{x^{-1}\}} x^{-1} E_{\tilde X\{x\}} \to E_{\tilde X} \to x E_{\tilde X\setminus \{x^{-1}\}} x^{-1} \varepsilon \to^+ xfx^{-1} =e,
$$
and so, $e\in L$.

Now, if $n>1$, then by induction hypothesis, each $e_i$ is in $L$. Since $e$ is a geodesic, each branch, i.e., each $e_i$, start with different letters, say $x_1,\ldots, x_n$, and so, any $e_i$ can be deduced from $E_{\tilde X\setminus \{x_j: \text{ for all } j\neq i\}}$. Hence, we can consider the deductions
\begin{equation*}
	\begin{aligned}
		S&\to E_{\tilde X} \to x_1 E_{\tilde X\setminus\{x_1^{-1}\}} x_1^{-1} E_{\tilde X\setminus \{x_1\}} \\
		&\to x_1 E_{\tilde X\setminus\{x_1^{-1}\}}x_1^{-1} x_2 E_{\tilde X\setminus\{x_2^{-1}\}} x_2^{-1} E_{\tilde X\setminus \{x_1,x_2\}}\to^+ e_1\ldots e_n=e
	\end{aligned}
\end{equation*}

Finally, consider $w\in \Geo(\FIM_X)$, by Lemma \ref{lemma_dec_geo}, there is a unique decomposition 
	$$
u=e_1u_1e_2u_2\ldots e_nu_ne_{n+1},
$$ 
where $u_1u_2\cdots u_n$ is the reduced word when considering the image of $u$ to the free group, $e_i$ represent an idempotent in $\FIM_X$ for $i\in\{1,\ldots, n+1\}$, where $e_1,e_{n+1}$ are the only allowed to be trivial.
By the previous arguments, all the subwords $u_i$ and $e_i$, for all $i$, are in $L$. Let $x_i$ and $y_i$ denote the first and last letters of $u_i$, respectively, for every $i$. Since $u$ is a geodesic, it follows that for each $i$, no component of $e_i$ cannot begin with $x_i^{-1}$ or $y_{i+1}$. Therefore, $e_i$ can be derived from $E_{\tilde X \setminus \{x_i^{-1},, y_{i+1}\}}$, and hence we may consider the following sequence of derivations.
\begin{equation*}
	\begin{aligned}
		S&\to E_{\tilde X\setminus \{x_1\}}x_1S_{x_1}\to^+ e_1x_1S_{x_1}\to^+ e_1u_1S_{y_1}\to e_1u_1E_{\tilde X\setminus \{y_1^{-1},x_2\}}x_2S_{x_2} \\
		&\to^+ e_1u_1e_2x_2S_{x_2} \to^+ u
	\end{aligned}
\end{equation*}
which implies that $u\in L$.

Hence $L=\Geo(\FIM_X)$.
\end{proof}

\begin{theorem}\label{Sl_Geo}
    Let $\FIM_X$ be the free inverse monoid generated by $X$. Then $\mathcal{H}\text{-}\SL(\FIM_X)=\SL(\FIM_X)$ is a context-free language.
\end{theorem}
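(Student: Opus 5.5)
Fix a total order $<$ on $\tilde X$. I would modify the grammar of Theorem~\ref{geo_CF} so that, among all geodesics representing a given element, it produces exactly the ShortLex-minimal one.

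The first step is to describe the freedom inside a geodesic. By Lemma~\ref{lemma_dec_geo} and G1--G10, every geodesic representing $g$ has the same reduced skeleton $u_1\cdots u_n$. It also has the same attachment of idempotent subtrees to the vertices of the main path, since these are determined by the Munn tree. The geodesic walk can differ from another in two ways. First, at a vertex $p$ of the main path, the main idempotent branches hanging at $p$ can be traversed in any order. Second, within any branch, the sub-branches at a vertex can be traversed in any order. Each branch is traversed as $x\,e\,x^{-1}$, and each edge is crossed exactly twice.

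The key observation is that, for words of equal length, the lexicographically least word is obtained by a greedy, local rule. At each vertex, the branches to be explored from it are visited in increasing order of the label of their first edge. This is applied at every vertex: on the main path before leaving along the next main-path letter, and inside idempotent branches. The first-edge labels of distinct branches at a vertex are distinct, because the tree is deterministic. Hence comparing two walks at the first point where they differ amounts to comparing the labels of the next edges chosen. The candidates are an unexplored branch edge, the next main-path edge, or the backtracking edge $x^{-1}$. Backtracking or continuing along the main path is forced only once all branches to be explored at that vertex are exhausted.

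The main subtlety is the choice at a main-path vertex between exploring remaining branches and moving forward. The two options are not symmetric. A branch with first label $a$ may either be explored now, or be explored later after returning to this vertex. Returning, however, would cross a main-path edge three times, which is forbidden by G6. So every branch at a main-path vertex must be explored before moving on, and the only freedom is the order among the branches. The same holds inside idempotent branches, where the backtracking letter $x^{-1}$ must come last. Consequently the ShortLex representative is the unique geodesic in which, at every vertex, the sub-branches are visited in increasing order of their first letter.

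To encode this, I would replace the nonterminal $E_M$ by $E_{M,c}$, where $c$ is the lower bound on the next allowed first letter. The production becomes
$E_{M,c}\to x\,E_{\tilde X\setminus\{x^{-1}\},\bot}\,x^{-1}\,E_{M\setminus\{x\},x}\mid\varepsilon$ for $x\in M$ with $x>c$,
where $\bot$ denotes no constraint. Equivalently, $E_M$ can simply be restricted so that it generates branches with first letters in $M$ in increasing order; since $M$ is finite, only finitely many nonterminals arise. The productions for $S$ and $S_x$ stay as in Theorem~\ref{geo_CF}.

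The proof of correctness then mirrors Theorem~\ref{geo_CF}. The inclusion of the generated language in $\Geo(\FIM_X)$ is inherited, because the new grammar is a restriction of the old one. By the characterization above, every generated word is the ShortLex representative. Conversely, every ShortLex geodesic has increasing branch order at every vertex, so the induction of Theorem~\ref{geo_CF} produces it with the refined nonterminals. I expect the main obstacle to be the rigorous justification that lexicographic minimality among geodesics reduces to this local ordering. I would prove it by taking the first differing position between two geodesic walks of the same element, and showing that both walks are at the same vertex with the same set of remaining edges. This uses G10 and the fact that geodesic walks explore each branch contiguously.
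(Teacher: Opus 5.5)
Your proposal is correct and follows the paper's approach. The paper's proof consists of changing the $E_M$-productions of the grammar of Theorem~\ref{geo_CF} to $E_M\to xE_{\tilde X\setminus\{x^{-1}\}}x^{-1}E_{\{y\in M:\,y>x\}}\mid\varepsilon$, which is equivalent to your $E_{M,c}$ refinement. The paper gives no further justification, whereas your argument (geodesics of an element correspond to choices of branch order at each vertex, main-path edges forbid returning, so the lexicographically least one takes the smallest available first letter at every step) fills in the reasoning it leaves implicit.
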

\begin{proof}
    The proof follows as of Theorem \ref{geo_CF} with the grammar
    \begin{equation*}
	\begin{array}{lrlll}
		& S &\to &  E_{\tilde X\setminus\{x\}}xS_x \mid E_{\tilde X},\ & (x\in \tilde X) \\
		\text{for each } x\in \tilde X, & S_x & \to & E_{\tilde X\setminus\{x^{-1},y\}}yS_y \mid  E_{\tilde X\setminus \{x^{-1}\}} \mid \varepsilon,\ & (y\in \tilde X\setminus\{x^{-1}\}) \\
		\text{for each } M\subseteq\tilde X,& E_M & \to & xE_{\tilde X\setminus\{x^{-1}\}}x^{-1} E_{\{y\in M:y>x\}} \mid \varepsilon,\ & (x\in M).
	\end{array}
\end{equation*}
The only modification required for the grammar of $\Geo(\FIM_X)$ 
is ensuring that each idempotent component is arranged in the correct order, i.e., respecting the ShortLex order.
\end{proof}

\begin{theorem} \label{thm_co-cont}
	Let $\FIM_X$ be the free inverse monoid generated by $X$. Then $\HGeo(\FIM_X)=\Geo(\FIM_X)$ is a co-context-free language.
\end{theorem}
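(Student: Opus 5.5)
We need to show that the complement of $\Geo(\FIM_X)$ in $\tilde X^*$ is context-free. The plan is to characterise non-geodesic words by a local, pattern-like witness that a nondeterministic pushdown automaton can guess and verify. By G10 and Lemma~\ref{passar3x}, a word $w$ is geodesic if and only if its walk in $MT(w)$ crosses every edge of the main path exactly once and every idempotent edge exactly twice. Equivalently, $w$ is geodesic if and only if no edge of $\Gamma(w)$ is traversed more often than necessary: at most once for a main-path edge, at most twice otherwise. So $w$ is \emph{not} geodesic exactly when one of the following holds:
\begin{enumerate}
\item[(i)] some edge is crossed at least three times;
\item[(ii)] some main-path edge is crossed at least twice.
\end{enumerate}
Since crossings of an edge alternate in direction, case (ii) with exactly two crossings would give the edge an even crossing count, contradicting that it lies on the main path (main-path edges are crossed an odd number of times). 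So (ii) also forces at least three crossings, and non-geodesicity reduces to the single condition that some edge is crossed at least three times.

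I would next translate "some edge is crossed at least three times" into a factorisation. An edge labelled $a$ crossed three times means
\[
w = w_1\, a\, l_1\, a^{-1}\, l_2\, a\, w_2,
\]
where $l_1$ and $l_2$ represent idempotents, as in the proof of Lemma~\ref{passar3x}; the same holds with $a$ and $a^{-1}$ swapped. Conversely, any such factorisation forces three crossings of one edge. Since a word is idempotent exactly when it reduces to the empty word in the free group (G3), the condition becomes: $w$ has a factor $a\,l_1\,a^{-1}\,l_2\,a$ with $l_1$ and $l_2$ in the free group word problem $W$.

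The complement is then
\[
\tilde X^* \setminus \Geo(\FIM_X)=\bigcup_{a\in\tilde X}\tilde X^*\, a\, W\, a^{-1}\, W\, a\, \tilde X^*.
\]
Here $W$ is context-free (it is a Dyck-type language, by \cite{[MS83]}), context-free languages are closed under concatenation and finite union, and $\tilde X^*$ is regular. Hence the complement is context-free, and $\Geo(\FIM_X)$ is co-context-free.

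The main obstacle is the converse direction of the factorisation. I must check that a factor $a\,l_1\,a^{-1}\,l_2\,a$ with $l_1,l_2$ free-group-trivial really crosses one Munn-tree edge three times. The walk of $w$ in $\Gamma(w)$ is a walk in the Cayley tree of the free group. Reading $a$ moves from a vertex $p$ to $pa$. Then $l_1$ returns the walk to $pa$, $a^{-1}$ recrosses the edge $\{p,pa\}$ back to $p$, $l_2$ returns the walk to $p$, and the final $a$ crosses the same edge a third time. So the converse holds. The proof of Lemma~\ref{passar3x} then gives a strictly shorter equivalent word, so $w$ is not geodesic.

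I also need the forward direction: a non-geodesic word crosses some edge at least three times. If every edge were crossed at most twice, then main-path edges (odd count) are crossed exactly once and idempotent edges (even count, at least two since they are traversed) exactly twice. The length of $w$ would then equal $2|E|-|\text{main path}|$, which is the geodesic length by G10. So $w$ would be geodesic, completing the characterisation.
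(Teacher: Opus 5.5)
Your proposal is correct and follows essentially the paper's proof: the complement is written as $\bigcup_{a\in\tilde X}\tilde X^*\, a\, W\, a^{-1}\, W\, a\, \tilde X^*$ with $W$ the free-group word problem, justified by the same characterization (a word is non-geodesic exactly when it crosses some Munn-tree edge three times, with the parity argument handling main-path edges). The only difference is cosmetic: you prove the forward direction by counting crossings against $2|E|$ minus the main-path length, whereas the paper compares $w$ with a shorter geodesic having the same Munn tree.
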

\begin{proof}
	Recall that $\HGeo=\Geo(\FIM_X)$, since $\HH$ is trivial.
	Let $F_X$ be the free group generated by $X$ and $L=\WP(F_X)$ be its word problem. By the Muller-Schupp Theorem, $L$ is context-free \cite{[MS83]}. Notice that $L$ is the language of all words that represent an idempotent of $\FIM_X$.
	
	We prove that
	$$
	\tilde X^*\setminus \Geo(\FIM_X) = \bigcup_{a\in \tilde X} \tilde X^* aLa^{-1}La\tilde X^*.
	$$
Notice that, in terms of Munn trees, this equality asserts that a word is not geodesic if and only if it crosses some edge of the Munn tree three times.

The fact that any word that crosses an edge 3 times is not a geodesic is proved in Lemma \ref{passar3x}. We prove the converse.

	Consider $w\in \tilde X^*\setminus \Geo(\FIM_X)$. Then there exists $u\in \Geo(\FIM_X)$ such that $w\sim u$ and $\ell(u)<\ell(w)$. We have that $MT(u)=MT(w)$.

    By Fact~10, the word $u$ crosses each edge of the main path exactly once and each idempotent edge exactly twice. Since $\ell(u) < \ell(w)$, and each letter corresponds to the crossing of an edge, while $w$ has the same Munn tree, it follows that $w$ must cross at least one edge more times than $u$ does.

If this edge lies in an idempotent branch, then it follows immediately that $w$ crosses that edge three times.

Suppose now that the edge lies on the main path. In that case, there exists an edge of the main path that $w$ crosses at least twice. To complete the argument, observe that for any edge of the main path, the first crossing must occur in the direction of the final vertex. Since a Munn tree contains no cycles, the second crossing must occur in the opposite direction. Therefore, in order for $w$ to end at the final vertex, it must cross that same edge once more in the original direction, hence crossing it a third time, as required.
\end{proof}

\section{\texorpdfstring{$\DGeo(\FIM_X)$}{DGeo(\FIM)} on 2 or more generators}

In this section we study the language $\DGeo(\FIM_X)$ on two or more generators, while the monogenic case is studied in Section \ref{Monogenico}.

\begin{theorem} \label{D-CF}
    Let $X$ be a finite set with $\abs X\ge 2$, and $L\subseteq\DGeo(\FIM_X)$ a language containing at least one representative for each $\DD$-class. Then $L$ is not context-free. In particular, for any total order on $\tilde X$, $\DGeo(\FIM_X)$ and $\DD\text{-}\SL(\FIM_X)$ are not context-free.
\end{theorem}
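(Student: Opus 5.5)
Suppose, for contradiction, that $L$ is context-free. I plan to intersect $L$ with a regular language so that the surviving words encode a tree for which the $\DD$-geodesic condition D4 forces a three-way length equality. Then I apply the Pumping Lemma, or Ogden's Lemma, to break that equality. Fix distinct $a,b\in X$. For $n\ge 1$ consider the tree $T_n$: a path of $2n$ edges labelled $a^{2n}$, with a branch $b^{n}$ attached at the midpoint of the $a$-path. The diameter of $T_n$ is $2n$, and it is realised only by the two endpoints of the $a$-path. The endpoint of the $b$-branch is at distance exactly $n$ from each of them.

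By D4, a word in $L$ representing the $\DD$-class of $T_n$ must have as main path a diameter of $T_n$. It must start at one end of the $a$-path and end at the other, crossing the branch $b^n b^{-n}$ exactly twice, by G10. Up to inverting, it is therefore $a^{n} b^{n} b^{-n} a^{n}$ or $a^{-n}b^nb^{-n}a^{-n}$. I intersect $L$ with the regular language $R=a^+b^+(b^{-1})^+a^+$. The result $L\cap R$ is context-free, and it contains, for each $n$, a word $a^{n}b^{n}b^{-n}a^{n}$; this follows because $L$ has a representative of each $\DD$-class and every geodesic of that class has the form above. Conversely, I need to show that every word $a^{i}b^{j}b^{-k}a^{l}\in L\cap R$ satisfies $i=l=j=k$. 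Since it is geodesic (it lies in $\DGeo$), we must have $j=k$, because otherwise $b$-edges are crossed once and the word is not geodesic, by G10 and the main-path structure. Its tree is then an $a$-path of length $i+l$ with a branch of length $j$ at distance $i$ from $\alpha$. By D4, this word is in $\DGeo$ exactly when $j\le \min(i,l)$. So
\[L\cap R=\{a^ib^jb^{-j}a^l : j\le \min(i,l)\}\]
on the nose. That is already a non-context-free language; the paper's tool is pumping.

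The main step is the pumping argument on $w=a^{p}b^{p}b^{-p}a^{p}\in L\cap R$. I take $p$ to be the pumping constant. Write $w=uvxyz$ with $|vxy|\le p$. If $v$ or $y$ straddles block boundaries, pumping leaves $R$. Otherwise $v$ and $y$ lie in at most two adjacent blocks, since $|vxy|\le p$. The case analysis goes as follows.

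If pumping up changes $b^p$ without changing $b^{-p}$, or the reverse, we get $j\ne k$. If it changes both $b$-blocks equally, pumping up gives $j>\min(i,l)$. If $v$ and $y$ touch only $a$-blocks (necessarily one $a$-block, since the two $a$-blocks are separated by $2p$ letters), pumping down gives $\min(i,l)<j$. The mixed cases are an $a$-block together with the adjacent $b$-block, $b^{\pm}$: pumping either up or down then unbalances $j,k$, unless only $a$ is pumped, which is the previous case.

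The main obstacle is the exact characterisation of $L\cap R$. I need that $L\subseteq\DGeo$ gives the inequality constraint, and that the representatives of the classes of $T_n$ are forced into $R$ up to inversion. To guarantee this, I will replace $R$ by $R\cup R^{-1}$ only if needed. Pumping works anyway with the word for $T_p$ lying in whichever of the two sets it falls into, and the inverse case is symmetric.
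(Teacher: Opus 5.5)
There is a genuine gap: two facts your argument rests on are false. First, in $T_n$ the tip of the $b$-branch is at distance $n$ from the midpoint, hence at distance $2n$ (not $n$) from \emph{each} end of the $a$-path. So the diameter is realised by three pairs of leaves, and the $\DD$-class of $T_n$ has six minimal representatives. One of them is $a^{2n}a^{-n}b^n$, which lies in neither $R$ nor $R^{-1}$. Since $L$ need only contain \emph{some} representative of each class, nothing puts $a^pb^pb^{-p}a^p$ or its inverse into $L$. To force uniqueness up to inversion you need strict inequalities, e.g.\ $a^{p+1}b^pb^{-p}a^{p+1}$; this is why the paper uses $a^{p+1}b^pb^{-(2p+1)}$.

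Second, and fatally for the pumping step, $a^ib^jb^{-k}a^l\in\DGeo$ does not force $j=k$. If $j\neq k$, the $|j-k|$ $b$-edges crossed once simply lie on the main path $a^ib^{j-k}a^l$. The other $\min(j,k)$ $b$-edges form an idempotent branch crossed twice, so G10 holds; for instance $ab^2b^{-1}a$ is geodesic. D4 then gives $a^{q}b^{p+t}b^{-p}a^{q}\in\DGeo$ for $q\in\{p,p+1\}$ and every $t\ge -p$. Hence your description of $L\cap R$ is wrong (and for an arbitrary $L$ you would only have an inclusion anyway). The case ``only one $b$-block is pumped'' then produces no contradiction. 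Take $L=\DGeo$ and the decomposition with $v=b$ inside the first $b$-block and $x=y=\varepsilon$: it satisfies the conclusion of the pumping lemma, both for your word and for the corrected one. The mixed case $v=a^s$, $y=b^t$ with $s\le t$ also stays inside $\DGeo\cap R$. So the pumping lemma applied to this four-block word cannot yield a contradiction; the filter $R$ is too permissive.

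The paper avoids this by filtering with $a^+b^+(b^{-1})^+$, together with the inverse form $b^+(b^{-1})^+(a^{-1})^+$. There $n_2>n_3$ means the word ends with an idempotent and so is not in $\DGeo$. Membership is therefore exactly $n_1\ge n_2$ and $n_3\ge 2n_2$. Ogden's lemma with the $b^p$ block marked forces one of $v,y$ to be a nonempty power of $b$, while at most one of $n_1,n_3$ can change. Pumping up then violates one of the two inequalities.
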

\begin{proof}
Let $L\subseteq \DGeo(\FIM_X)$ be a language containing at least one element of each $\DD$-class. Consider
\[
K := L \cap \Bigl( a^+b^+(b^{-1})^+ \cup b^+(b^{-1})^+(a^{-1})^+\Bigr)
\]

Note that a word of the form $a^{n_1}b^{n_2}(b^{-1})^{n_3}$ with $n_1,n_2,n_3>0$ is in $\DGeo(\FIM_X)$ if and only if $n_1\ge n_2$ and $n_3\ge 2n_2$. Indeed,
\begin{itemize}[leftmargin=6mm]
    \item If $n_2>n_3$, then $a^{n_1}b^{n_2}\,\DD\, a^{n_1}b^{n_2}b^{-n_3}$, so that $a^{n_1}b^{n_2}b^{-n_3}\notin\DGeo(FIM_X)$.
    \item If $n_2\le n_3$ then we have a tree below and the word is in $\DGeo(FIM_X)$ if and only if the branch of length $n_2$ (the one crossed twice) is the shortest, i.e., $n_1\ge n_2$ and $n_3-n_2\ge n_2$.
\end{itemize} 
\begin{center}
	\begin{tikzpicture}[
		vertex/.style={circle, draw, inner sep=1.2pt, fill=white},
		>={Stealth[length=5pt]},
		every node/.style={font=\small}
		]

		\node[vertex] (v0) at (0,0) {};

		\node[vertex] (v1) at (-2.1,0) {};
		
		\draw[-] (v1) -- ($(v1)!0.55!(v0)$);
		\node at ($(v1)!0.75!(v0)$) {$\cdots$};
		\draw[->] ($(v1)!0.95!(v0)$) -- (v0);
		
		\node[above] at ($(v1)!0.3!(v0)$) {$a$};
		
		\draw[decorate,decoration={brace,mirror,amplitude=5pt}]
		(v1.south) -- (v0.south)
		node[midway,below=8pt] {$n_1$};

		\node[vertex] (v2) at (0,1.6) {};
		
		\draw[-] (v0) -- ($(v0)!0.55!(v2)$);
		\node at ($(v0)!0.75!(v2)$) {$\vdots$};
		\draw[->] ($(v0)!0.95!(v2)$) -- (v2);
		
		\node[left] at ($(v0)!0.35!(v2)$) {$b$};
		
		\draw[decorate,decoration={brace,mirror,amplitude=5pt}]
		([xshift=8pt]v0.east) -- ([xshift=8pt]v2.east)
		node[midway,right=7pt] {$n_2$};

		\node[vertex] (v3) at (0,-2.1) {};
		
		\draw[-] (v0) -- ($(v0)!0.55!(v3)$);
		\node at ($(v0)!0.75!(v3)$) {$\vdots$};
		\draw[->] ($(v0)!0.95!(v3)$) -- (v3);
		
		\node[left] at ($(v0)!0.35!(v3)$) {$b^{-1}$};
		
		\draw[decorate,decoration={brace,mirror,amplitude=5pt}]
		([xshift=8pt]v3.east) -- ([xshift=8pt]v0.east)
		node[midway,right=7pt] {$n_3-n_2$};

		\draw[->, thick] ($(v1)+(-0.35,0)$) -- (v1);
		\draw[->, thick] (v3) -- ++(0,-0.35);
		
	\end{tikzpicture}
\end{center}
A similar argument holds for words $b^{n_3}(b^{-1})^{n_2}(a^{-1})^{n_1}$. It follows that
\[ K \subseteq \bigl\{ a^{n_1}b^{n_2}(b^{-1})^{n_3},\; b^{n_3}(b^{-1})^{n_2}(a^{-1})^{n_1} \;\big|\; n_1,n_2,n_3\in\Z_{>0},\, n_1\ge n_2,\, n_3\ge 2n_2 \bigr\}. \]
Moreover, if $n_1>n_2$ and $n_3>2n_2$, then the two words are the only words in $\DGeo(\FIM_X)$ representing their $\DD$-class, hence at least one of the two belongs to $K$. 

We now prove that $L$ is not context-free. Suppose that $L$, and therefore $K$, are context-free. Let $p$ be the constant of the Ogden's Lemma for $K$ and assume, without loss of generality that, that $w=a^{p+1}b^p(b^{-1})^{2p+1}\in K$. Mark every position of the subword $b^p$ and consider the decomposition $w=uvxyz$ given by the Ogden's Lemma. We have that for all $i \geq 0$, the word $uv^i x y^i z$ belongs to $K$, so, since $v$ and/or $y$ must contain a $b$, it must be a power of $b$ (otherwise, after pumping, we would leave $K$). Hence, $v$ and $y$ cannot both contain $a$ and $b^{-1}$. But then, pumping increases the power $n_2$, while leaving $n_1$ and/or $n_3$ unchanged, which implies that, for a sufficiently large $i$, we will get $n_2>n_1$ or $n_3>2n_2$. This is a contradiction.
   \end{proof}

\begin{theorem}\label{DGeo_not_CCF}
    Let $X$ be a finite set such that $\abs X\ge 2$, and fix a total order on $\tilde X$. Let $L$ be a language satisfying $\DD\text{-}\SL(\FIM_X)\subseteq L\subseteq \DGeo(\FIM_X)$. Then $L$ is not co-context-free. In particular, $\DGeo(\FIM_X)$ and $\DD\text{-}\SL(\FIM_X)$ are not co-context-free.
\end{theorem}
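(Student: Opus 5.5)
Suppose, for a contradiction, that the complement $C=\tilde X^*\setminus L$ is context-free. We intersect $C$ with a regular language $R$ chosen so that the membership of $w\in R$ in $L$ is completely determined. The result $C\cap R$ is then context-free, and we will show by a pumping argument that it cannot be.

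\textbf{Choice of $R$.} Take $a<b$ in $X$ and put
\[
R=a^+b^+(b^{-1})^+ .
\]
As in the proof of Theorem~\ref{D-CF}, a word $a^{n_1}b^{n_2}(b^{-1})^{n_3}$ with $n_i>0$ lies in $\DGeo(\FIM_X)$ if and only if $n_1\ge n_2$ and $n_3\ge 2n_2$. Both inclusions $\DD\text{-}\SL\subseteq L\subseteq\DGeo$ must now be used.

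If the word is not in $\DGeo$, it is not in $L$. Suppose the word is in $\DGeo$. Its $\DD$-class has only finitely many geodesic representatives. When $n_1>n_2$ and $n_3>2n_2$, the only ones are $a^{n_1}b^{n_2}b^{-n_3}$ and $b^{n_3}b^{-n_2}a^{-n_1}$ (this was observed in the proof of Theorem~\ref{D-CF}). In the boundary cases there are a few more, obtained by swapping which branch is crossed twice.

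To avoid this ambiguity, I would fix the order so that $a$ comes before $b$, $b^{-1}$ and $a^{-1}$. Then $a^{n_1}b^{n_2}b^{-n_3}$ is the ShortLex-least among all representatives: any competitor of the same length starting with $a$ must be of the form $a^{m}\cdots$ with the same tree. If that ordering check is delicate, I would fall back on the explicit strict regime and only use words with $n_1>n_2$ and $n_3>2n_2$, where the first-letter comparison suffices.

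With this choice, $w\in R$ satisfies $w\notin L$ if and only if $n_1<n_2$ or $n_3<2n_2$, at least within a regime that we can pin down by further intersecting with a regular set.

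\textbf{Pumping.} The complement inside $R$ is
\[
C\cap R=\{a^{n_1}b^{n_2}b^{-n_3} : n_1<n_2 \text{ or } n_3<2n_2\},
\]
up to the boundary cases. This is a union of two context-free pieces, so we cannot conclude yet. To force a non-context-free shape I would intersect instead with a language in which the two conditions interact, for example words with the tree of Theorem~\ref{D-CF} together with a third branch. Concretely, take
\[
a^{n_1}b^{n_2}b^{-n_2}c\cdots
\]
where $c$ is either $a^{-1}$ or $b^{-1}$, giving a three-branch star, so that membership in the complement reads ``the twice-crossed branch is \emph{not} the shortest''. This is the condition $n_2>\min(n_1,n_3)$.

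Its complement within the regular shape is $\{n_2\le n_1,\ n_2\le n_3\}$, which is still context-free. So I expect the correct test set to use the ShortLex condition, for instance requiring the twice-crossed branch to have the minimal label among branches of equal length. This should yield a complement of the form $\{a^{n}b^{m}b^{-k}: \text{not}(n=m=k\ \ldots)\}$, and then an Ogden argument applied to the complement with marked $b$'s gives the contradiction.

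\textbf{Main obstacle.} The key difficulty is designing the regular test language so that the complement of $L$ in it is genuinely non-context-free. Ties between branch lengths, resolved by ShortLex, seem necessary to produce an equality constraint such as $n_1=n_2=n_3$ that is hidden inside a complement. I would try first to find words where $L$-membership fails exactly when two of three lengths coincide, since the language
\[
\{a^nb^mc^k : n\ne m \text{ or } m\ne k\}
\]
is context-free, whereas a condition of the form ``exactly two of the three are equal'' is not.
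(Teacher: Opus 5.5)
Your proposal does not contain a proof. It ends by naming the main obstacle, namely finding a regular set $R$ for which $L^c\cap R$ is provably not context-free, and it does not overcome it. You are right that $R=a^+b^+(b^{-1})^+$ fails: for $L=\DGeo$ one gets $L^c\cap R=\{n_1<n_2\}\cup\{n_3<2n_2\}$, which is a union of context-free sets. The three-leg star fails for the same reason, since for $L=\DGeo$ its complement is $\{n_2>n_1\}\cup\{n_2>n_3\}$. (The set $\{n_2\le n_1,\ n_2\le n_3\}$, which you call context-free, is in fact not context-free, but it describes $L$, not $L^c$.)

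The route you then suggest, using ShortLex tie-breaking to hide an equality constraint in the complement, cannot work for this statement. In a $\DD$-class with several $\DD$-geodesics, only the ShortLex-least one is forced into $L$. The others may or may not belong to $L$, and for $L=\DGeo$ they all do. So a tie never forces a word into $L^c$. An argument valid for every $L$ with $\DD\text{-}\SL\subseteq L\subseteq\DGeo$ may only use words whose status is forced:
\begin{itemize}
\item words outside $\DGeo$, which lie in $L^c$;
\item words that are, up to inversion, the unique $\DD$-geodesic of their class and are ShortLex-smaller than their inverse, which lie in $\DD\text{-}\SL\subseteq L$.
\end{itemize}

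The missing idea is that no equality is needed. Two comparisons sharing one block length already defeat context-freeness, and they appear once the block structure of the test words no longer determines the shape of the tree. The paper takes $a<a^{-1}$, $b\in\tilde X\setminus\{a,a^{-1}\}$ and $R=a^+b^+(b^{-1})^+a^+$. For $a^{n_1}b^{n_2}b^{-n_3}a^{n_4}$, the vertex where the twice-crossed $b$-segment hangs depends on whether $n_3\ge n_2$. In the regime $n_3\ge n_2$ the word leaves $\DGeo$ as soon as $n_1<n_2$, which couples $n_1<n_2$ with $n_2\le n_3$.

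Concretely, $w=a^pb^{p+1}b^{-(p+2)}a^{p+3}\notin\DGeo$: its main path has length $2p+4$, while the $b$-leaf is at distance $2p+5$ from the final vertex. Hence $w\in L^c\cap R$. Mark the first $a$-block and apply Ogden's lemma. It yields $v,y$ that are powers of single letters, at least one of them a nonempty power of $a$ inside the first block. Wherever the other factor lies (first $a$-block, $b^+$, $(b^{-1})^+$ or last $a$-block), pumping up far enough makes the main path the unique diameter of the tree. Then the pumped word and its inverse are the only $\DD$-geodesics of their class. Since the word starts with $a<a^{-1}$, it belongs to $\DD\text{-}\SL\subseteq L$, which contradicts Ogden's lemma for $L^c\cap R$.
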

\begin{proof}
    Consider $a,b\in\tilde X$ such that $a<a^{-1}$ and $b\in \widetilde X\setminus\{a,a^{-1}\}$. Let $K=L^c\cap a^+b^+(b^{-1})^+a^+$. 
    
    Suppose that $L$ is co-context-free, hence $K$ is context-free.
    Consider $p$ the constant of the Ogden's lemma and the word $w = a^p b^{p+1} b^{-(p+2)} a^{p+3},$ which is not in $\mathcal{D}$-Geo by D4, and so it belongs to $K$. Indeed, the distance between the initial and final vertices of the Munn tree of $w$ is
$
p+1+(p+3)=2p+4,$ 
and the distance between the upper $b$-leaf and the final vertex is
$
(p+1)+1+(p+3)=2p+5.
$

\begin{center}
	\begin{tikzpicture}[
		vertex/.style={circle, draw, inner sep=1.2pt, fill=white},
		>={Stealth[length=5pt]},
		every node/.style={font=\small}
		]
		
		\node[vertex] (v0) at (0,0) {};

		\node[vertex] (v1) at (-2.1,0) {};
		
		\draw[-] (v1) -- ($(v1)!0.55!(v0)$);
		\node at ($(v1)!0.75!(v0)$) {$\cdots$};
		\draw[->] ($(v1)!0.95!(v0)$) -- (v0);
		
		\node[above] at ($(v1)!0.3!(v0)$) {$a$};
		
		\draw[decorate,decoration={brace,mirror,amplitude=5pt}]
		(v1.south) -- (v0.south)
		node[midway,below=8pt] {$p$};
		
		\node[vertex] (v2) at (0,1.6) {};
		
		\draw[-] (v0) -- ($(v0)!0.55!(v2)$);
		\node at ($(v0)!0.75!(v2)$) {$\vdots$};
		\draw[->] ($(v0)!0.95!(v2)$) -- (v2);
		
		\node[left] at ($(v0)!0.35!(v2)$) {$b$};
		
		\draw[decorate,decoration={brace,mirror,amplitude=5pt}]
		([xshift=8pt]v0.east) -- ([xshift=8pt]v2.east)
		node[midway,right=7pt] {$p+1$};
	
		\node[vertex] (v3) at (0,-0.7) {};
		
			\draw[->] (v0) -- (v3);

		\node[vertex] (v4) at (2.1,-0.7) {};
		
		\draw[-] (v3) -- ($(v3)!0.55!(v4)$);
		\node at ($(v3)!0.75!(v4)$) {$\cdots$};
		\draw[->] ($(v3)!0.95!(v4)$) -- (v4);
		
		\node[above] at ($(v3)!0.35!(v4)$) {$a$};
		
		\draw[decorate,decoration={brace,mirror,amplitude=5pt}]
		(v3.south) -- (v4.south)
		node[midway,below=8pt] {$p+3$};

		\draw[->, thick] ($(v1)+(-0.35,0)$) -- (v1);
		\draw[->, thick] (v4) -- ++(0,-0.35);
		
	\end{tikzpicture}
\end{center}

We mark the first $p$ occurrences of the letter $a$. Let $w = uvxyz$ be a decomposition satisfying the hypotheses of the lemma. It is immediate that both $v$ and $y$ must be powers of a single letter. Moreover, at least one of them must be a power of part of the first $p$ occurrences of $a$.

We distinguish the following cases.

\begin{enumerate}
	\item $v = a^s$ and $y = a^\ell$, with $s \geq 1$ or $\ell \geq 1$.
	In this case, any pumping with exponent greater than \(2\) forces the resulting word to belong to \(\DD\text{-}\SL(\FIM_X)\) (and so not to belong to $K$), since the rooted vertices became the furthest possible and is the shortest with respect to the total order, as the only two words in $\mathcal D$-Geo representing the same $\mathcal{D}$-class as $w$ are $w$ and $w^{-1}$ but $a<a^{-1}$.
		
	\item $v = a^s$ and $y = b^\ell$.  
	In this case, if $k$ is the pumping parameter, for sufficiently large $k$, we get the word $w_k=uv^kxy^kz=a^{p+(k-1)s}b^{p+1+(k-1)\ell}b^{-(p+2)}a^{p+3}$ and the following Munn tree 
    \begin{center}
			\begin{tikzpicture}[
				vertex/.style={circle, draw, inner sep=1.2pt, fill=white},
				>={Stealth[length=5pt]},
				every node/.style={font=\small}
				]

				\node[vertex] (v0) at (0,-1.8) {};
				\node[vertex] (v1) at (0,0) {};
				\node[vertex] (v2) at (0,1.8) {};

				\draw[-] (v0) -- ($(v0)!0.55!(v1)$);
				\node at ($(v0)!0.75!(v1)$) {$\vdots$};
				\draw[->] ($(v0)!0.95!(v1)$) -- (v1);
				
				\node[left] at ($(v0)!0.35!(v1)$) {$b$};
				
				\draw[-] (v1) -- ($(v1)!0.55!(v2)$);
				\node at ($(v1)!0.75!(v2)$) {$\vdots$};
				\draw[->] ($(v1)!0.95!(v2)$) -- (v2);
				
				\node at ($(v1)!0.35!(v2) + (-4pt,0)$) {$b$};
				
				\draw[decorate,decoration={brace,amplitude=6pt}]
				([xshift=-6pt]v0.west) -- ([xshift=-6pt]v2.west)
				node[midway,left=7pt] {$p+1+(k-1)\ell$};

				\draw[decorate,decoration={brace,mirror,amplitude=5pt}]
				([xshift=5pt]v1.east) -- ([xshift=5pt]v2.east)
				node[midway,right=6pt] {$p+2$};

				\node[vertex] (v4) at (2.1,0) {};
				
				\draw[-] (v1) -- ($(v1)!0.55!(v4)$);
				\node at ($(v1)!0.75!(v4)$) {$\cdots$};
				\draw[->] ($(v1)!0.95!(v4)$) -- (v4);

                  \draw[->, thick] (v4)--($(v4)+(0.35,0)$);
				
				\draw[decorate,decoration={brace,mirror,amplitude=5pt}]
				(v1.south) -- (v4.south)
				node[midway,below=8pt] {$p+3$};

				\node[vertex] (v5) at (-2.1,-1.8) {};
				
				\draw[-] (v5) -- ($(v5)!0.55!(v0)$);
				\node at ($(v5)!0.75!(v0)$) {$\cdots$};
				\draw[->] ($(v5)!0.95!(v0)$) -- (v0);
				
				\node[above] at ($(v5)!0.25!(v0)$) {$a$};

                \draw[->, thick] ($(v5)+(-0.35,0)$) -- (v5);
				
				\draw[decorate,decoration={brace,mirror,amplitude=5pt}]
				(v5.south) -- (v0.south)
				node[midway,below=8pt] {$p+(k-1)s$};
				
			\end{tikzpicture}
		\end{center}
    which, by D4., implies that  the only two words in $\DGeo(\FIM_X)$ representing this
$\mathcal D$-class are $w_k$ and $w_k^{-1}$. Since $w_k$ begins with $a$ and
$w_k^{-1}$ begins with $a^{-1}$, and $a<a^{-1}$, we have
    that $w$ represents a word in $\DD\text{-}\SL(\FIM_X)$,
    and so a word not in $K$.
		
	\item $v = a^s$ and $y = b^{-\ell}$.  
This case is analogous to Case~\(1\).
\end{enumerate}
	
	Hence, $L$ is not a context-free language.
\end{proof}

\section{\texorpdfstring{$\RGeo$}{RGeo} and \texorpdfstring{$\LGeo$}{LGeo} on 2 or more generators}

Analogously to the previous section, we study here the languages $\RGeo(\FIM_X)$ and $\LGeo(\FIM_X)$ over alphabets with two or more generators, while the monogenic case is studied in Section~\ref{Monogenico}.

\begin{theorem}\label{RL_CF}
Let $X$ be a finite set such that $|X|\ge 2$ and $L\subseteq \LGeo(\FIM_X)$ (resp.\ $\RGeo$) a language containing at least one representative of each $\LL$-class (resp.\ $\RR$-class). Then the $L$ is not context-free. In particular, for any fixed total order on $\tilde X$, the languages $\LGeo(\FIM_X)$ and $\LL\text{-}\SL(\FIM_X)$ (resp.\ $\RR$) are not context-free.
\end{theorem}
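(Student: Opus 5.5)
The plan mirrors the proof of Theorem~\ref{D-CF}. By Proposition~\ref{RGeo_LGeo} we have $\RGeo(\FIM_X)=\LGeo(\FIM_X)^{-1}$, and a language containing a representative of every $\RR$-class is, after inversion, a language containing a representative of every $\LL$-class. Context-free languages are closed under reversal and under the letter-to-letter map $x\mapsto x^{-1}$, so it suffices to treat $\LL$. Let $L\subseteq\LGeo(\FIM_X)$ contain a representative of every $\LL$-class, fix $a\ne b$ in $X$, and intersect $L$ with a suitable regular language $R$. This gives a language $K$ that is context-free if $L$ is, and to which we apply Ogden's Lemma.

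For the test family I would take words $a^{n_1}b^{n_2}b^{-n_2}a^{-n_3}$ with $n_1>n_3>0$ and $n_2>0$. Their Munn tree is an $a$-path of length $n_1$ with a $b$-branch of length $n_2$ at its far end. The final vertex lies at distance $n_1-n_3$ from the initial vertex, and the $b$-leaf lies at distance $n_2+n_3$ from the final vertex. The first step is to prove, using L4 and the description G1--G10, that such a geodesic lies in $\LGeo$ if and only if $n_1-n_3\ge n_2+n_3$, i.e.\ $n_1\ge n_2+2n_3$. The second step is to show that when the inequality is strict, the initial vertex is the unique vertex farthest from the final vertex. In that case the word is the unique $\LL$-geodesic of its class: the idempotent $b^{n_2}b^{-n_2}$ can only be inserted at the end of the $a$-path, and no geodesic can start its walk elsewhere. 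Hence such a word must belong to $L$, and therefore to $K$. To keep $K$ inside this family I would intersect with a regular set and then check that the words of $K$ lying in it are of the form above. If $R=a^+b^+(b^{-1})^+(a^{-1})^+$ turns out to be too large, I will restrict further by a finite-state condition, for instance replacing the middle by the exact pattern forced by geodesicity together with L4. Words with unequal $b$-exponents should fail either L4 or geodesicity, and I will verify this.

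Then I apply Ogden's Lemma to $w=a^{3p+1}b^{p}b^{-p}a^{-p}$ with the $p$ letters $a^{-1}$ marked. Each of $v$ and $y$ must be a power of a single letter, since otherwise pumping leaves $R$, and at least one of them lies in the $a^{-1}$-block. If neither lies in the $a$-block, pumping up increases $n_2+2n_3$ while $n_1$ stays fixed, which eventually violates $n_1\ge n_2+2n_3$ and contradicts membership in $L\subseteq\LGeo$. If $v$ or $y$ pumps the $b$-block without the $b^{-1}$-block (or vice versa), I also need that unequal $b$-exponents leave $K$; this is checked as above.

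The main obstacle is the case $v=a^t$ in the $a$-block and $y=a^{-s}$ in the $a^{-1}$-block, with the whole $b$-part inside $x$. Pumping up fails when $t<2s$ and pumping down ($i=0$) fails when $t>2s$, but $t=2s$ survives both. To kill this case I would first try modifying the test word so that the inequality involves two independent constraints, as in Theorem~\ref{D-CF}, where both $n_1\ge n_2$ and $n_3\ge 2n_2$ were needed. Concretely, I would attach a second branch so that pumping the $a$-block against the $a^{-1}$-block changes the distance to some other leaf by a different ratio. A second option is a different marking: mark the $b$-block and choose the exponents so that every admissible pair $(v,y)$ changes the quantity "distance to the farthest leaf minus main-path length" with a nonzero sign.
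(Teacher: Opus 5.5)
Your reduction of the $\RR$ case to the $\LL$ case by inversion is fine, and so are the criterion $n_1\ge n_2+2n_3$ and the uniqueness of the $\LL$-geodesic in the strict case. The Ogden step, however, does not go through, and it fails in more cases than the one you flag.

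Because the marked positions are exactly the $p$ letters $a^{-1}$, the bound on marked positions in $vxy$ is vacuous, so $v$ may lie anywhere to the left of $y$. Take $v=b^{j}$ inside the $b$-block and $y=a^{-s}$, with $1\le j,s<p$.
\begin{itemize}
\item Every pumped word $a^{3p+1}b^{p+(i-1)j}b^{-p}a^{-(p+(i-1)s)}$ is geodesic. The surplus or deficit of $b$'s creates no third crossing; it only moves the vertex where the final $a^{-1}$-path leaves the $b$-branch, which lengthens the main path.
\item The initial vertex is the unique vertex farthest from the final vertex: it is at distance $4p+1+|i-1|j+(i-1)s$, while the $b$-leaf is at distance $2p+(i-1)s$.
\item So the word is the unique $\LL$-geodesic of its class. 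Hence it lies in $L\cap R=K$ for every $i\ge 0$, and Ogden's Lemma yields no contradiction.
\end{itemize}
The same happens for $v=b^{-j}$, and, as you noticed, for $v=a^{2s}$, $y=a^{-s}$. In particular, your expectation that words with unequal $b$-exponents fail geodesicity or L4 is false. Equality of the two $b$-exponents is also not a regular condition, so shrinking $R$ cannot rescue the argument. Your second fallback, marking the $b$-block, fails for the same reason: already $v=b^{j}$, $y=\varepsilon$ survives all pumping. Your first fallback, adding a second branch, remains a sketch, so as written there is no contradiction in these cases.

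The missing idea is a test word whose final vertex is pinned at a branch vertex by a closing block. Then pumping the marked block without an exactly matching compensation forces some edge to be crossed three times (Lemma~\ref{passar3x}). The paper uses $w=a^{p+1}b^pb^{-2p}b^p$: its main path is $a^{p+1}$, and two idempotent branches of length $p$ hang at the final vertex. The $\LL$-geodesics of this class are exactly $w$ and $a^{p+1}b^{-p}b^{2p}b^{-p}$. This is why $L$ is intersected with $a^+b^+(b^{-1})^+b^+\cup a^+(b^{-1})^+b^+(b^{-1})^+$, and why one may assume $w\in K$. With the last $b$-block marked, any pattern other than $v=b^{-k}$ in the $b^{-1}$-block and $y=b^{k}$ in the last block does one of two things: it leaves this regular set, or, for $i=0$ or $i=2$, it yields a walk crossing an edge three times. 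The matched pattern gives the lower branch length $p+(i-1)k>p+1$ for large $i$, which violates L4.
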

\begin{proof}
    Let $L\subseteq\LGeo(\FIM_X)$ be a language containing at least one element by $\LL$-class. Consider 
    \[
    K = L \cap \Bigl(a^+b^+(b^{-1})^+b^+ \cup a^+(b^{-1})^+b^+(b^{-1})^+\Bigr).
    \]
    We now prove that $L$ is not context-free. Suppose that $L$ is context-free, therefore $K$ is context-free too. Let $p$ be the constant of the Ogden's Lemma for $K$. Observe that
    \[
    a^{p+1}b^p(b^{-1})^{2p}b^p \quad\text{and}\quad a^{p+1}(b^{-1})^pb^{2p}(b^{-1})^p
    \]
    are the only two geodesics representing elements of their class in $\LGeo(\FIM_X)$. Without loss of generality, one can assume that $w=a^{p+1}b^p(b^{-1})^{2p}b^p\in K$,  and  we mark the last $p$ $b$'s. 
    
    By Ogden's Lemma, there exists a decomposition $ w = uvxyz $ such that
$$
uv^{m} xy^{m} z \in L \quad \text{for all } m \ge 0.
$$

Since $ K \subseteq a^+b^+(b^{-1})^+b^+ \cup a^+(b^{-1})^+b^+(b^{-1})^+$, both $v$ and $y$ must consist of powers of a single letter.  Moreover, one of them must necessarily be a power of the final block of $b$'s.

Observe that any positive pumping in the final block of $b$'s that is not accompanied by positive pumping in the block of $b^{-1}$'s produces a non-geodesic word. Therefore, we must have
$$
v = b^{-k} \quad \text{and} \quad y = b^{\ell},
$$
for some $k, \ell >0$. Notice that, for a word in $a^{+} b^{+} (b^{-1})^{+} b^{+}$ to be geodesic, the total exponent of $b$ must be less than or equal to the exponent of $b^{-1}$.

Suppose first that $k > \ell$. Then, for $m = 0$, the resulting word is no longer geodesic, since the total exponent of $b$ becomes strictly greater than that of $b^{-1}$. Similarly, if $k < \ell$, the word fails to be geodesic for any $m > 2$. Hence we conclude that $k = \ell$.

We may therefore write
$$
w = u b^{-k} x b^{k} z,
$$
for some $k \in \mathbb{N}$.

Now observe that any pumping with $m > 2$ increases the length of the lower idempotent branch of the associated Munn tree, making it longer than $p+1$. However, once the lower branch has length greater than $p+1$, the word no longer belongs to $\mathcal{L}\text{-}\mathrm{Geo}$, since the main path is no longer maximal.
	
		\begin{center}
		\begin{tikzpicture}[
			vertex/.style={circle, draw, inner sep=1.2pt, fill=white},
			>={Stealth[length=5pt]},
			every node/.style={font=\small}
			]

			\node[vertex] (v0) at (0,0) {};
			
			\node[vertex] (v1) at (-2.1,0) {};
			
			\draw[-] (v1) -- ($(v1)!0.55!(v0)$);
			\node at ($(v1)!0.75!(v0)$) {$\cdots$};
			\draw[->] ($(v1)!0.95!(v0)$) -- (v0);
			
			\node[above] at ($(v1)!0.3!(v0)$) {$a$};
			
			\draw[decorate,decoration={brace,mirror,amplitude=5pt}]
			(v1.south) -- (v0.south)
			node[midway,below=8pt] {$p+1$};

			\node[vertex] (v2) at (0,1.6) {};
			
			\draw[-] (v0) -- ($(v0)!0.55!(v2)$);
			\node at ($(v0)!0.75!(v2)$) {$\vdots$};
			\draw[->] ($(v0)!0.95!(v2)$) -- (v2);
			
			\node[left] at ($(v0)!0.35!(v2)$) {$b$};
			
			\draw[decorate,decoration={brace,mirror,amplitude=5pt}]
			([xshift=8pt]v0.east) -- ([xshift=8pt]v2.east)
			node[midway,right=7pt] {$p$};

			\node[vertex] (v3) at (0,-2.1) {};
			
			\draw[-] (v0) -- ($(v0)!0.55!(v3)$);
			\node at ($(v0)!0.75!(v3)$) {$\vdots$};
			\draw[->] ($(v0)!0.95!(v3)$) -- (v3);
			
			\node[left] at ($(v0)!0.35!(v3)$) {$b^{-1}$};
			
			\draw[decorate,decoration={brace,mirror,amplitude=5pt}]
			([xshift=8pt]v3.east) -- ([xshift=8pt]v0.east)
			node[midway,right=7pt] {$> p+1$};

			\draw[->, thick] ($(v1)+(-0.35,0)$) -- (v1);
			\draw[->, thick] (v0) -- ++(0.3,0);
			
		\end{tikzpicture}
	\end{center}

Therefore, Ogden's Lemma is not satisfied. It follows that $K$, and so $L$ are not context-free languages.
\end{proof}
\begin{proposition}\label{RL_CCF}
	Let $\FIM_X$ be the free inverse monoid generated by the finite set $X$ such that $|X|\ge 2$. Then $\RGeo(\FIM_X)$ and $\LGeo(\FIM_X)$ are not co-context-free languages.
\end{proposition}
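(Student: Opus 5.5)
By Proposition~\ref{RGeo_LGeo} we have $\RGeo(\FIM_X)=\LGeo(\FIM_X)^{-1}$. Context-free languages are closed under reversal combined with the letter substitution $x\mapsto x^{-1}$, and the complement of $L^{-1}$ is $(L^c)^{-1}$. It therefore suffices to show that $\LGeo(\FIM_X)$ is not co-context-free. We follow the pattern of Theorem~\ref{DGeo_not_CCF}. Fix $a,b\in X$ distinct, assume for contradiction that $\LGeo(\FIM_X)^c$ is context-free, and intersect with a regular language to get
\[
K=\LGeo(\FIM_X)^c\cap a^+b^+(b^{-1})^+a^+ ,
\]
which would then be context-free. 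Let $p$ be the Ogden constant for $K$.

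Next we describe $K$ via Munn trees, using G10 and L4. Take a word $a^{n_1}b^{n_2}b^{-n_3}a^{n_4}$ with $n_3\ge n_2$. Its tree has a vertex $v$ at the end of $a^{n_1}$, an upward $b$-branch of length $n_2$ at $v$, and a downward path of $n_3-n_2$ edges labelled $b^{-1}$ from $v$ to a vertex $v'$, followed by $a^{n_4}$. The word is geodesic, since each edge is crossed at most twice and never three times. The final vertex $\beta$ is at distance $n_4+(n_3-n_2)$ from $v$ along the main path. By L4 the word lies in $\LGeo$ exactly when no vertex is farther from $\beta$ than $\alpha$ is. The relevant competitor is the top of the $b$-branch, at distance $n_4+(n_3-n_2)+n_2=n_4+n_3$ from $\beta$, while $d(\alpha,\beta)=n_1+n_4+n_3-n_2$. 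Hence, for $n_3\ge n_2$, membership in $K$ is equivalent to $n_1<n_2$. The degenerate case $n_3<n_2$ should be checked separately, but for our chosen word it will not occur.

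Take $w=a^{p}b^{p+1}b^{-(p+2)}a^{p}\in K$, and mark the first block of $p$ letters $a$. By Ogden's Lemma, $w=uvxyz$ where $v$ and $y$ together contain a marked $a$. Pumping must keep us inside $a^+b^+(b^{-1})^+a^+$, so $v$ and $y$ are each powers of a single letter. One of them lies in the first $a$-block, which we may take to be $v=a^s$ with $s\ge1$. Then $y$ lies in the first $a$-block, the $b$-block, the $b^{-1}$-block, or the last $a$-block. Pumping up by a large exponent $m$ makes $n_1$ grow at rate $s$. In every case except $y=b^{\ell}$, $n_2$ stays fixed while $n_3\ge n_2$ persists, so $n_1\ge n_2$ and the pumped word leaves $K$. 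This is the same contradiction as in Theorem~\ref{DGeo_not_CCF}.

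The main obstacle is the case $y=b^{\ell}$, where $n_1$ and $n_2$ grow together. Pumping up with $\ell\ge s$ may stay in $K$ as long as $n_3\ge n_2$, and once $n_2>n_3$ we need the degenerate case $n_3<n_2$. There the tree is a single $b$-path, the word ends with the idempotent $a^{n_4}a^{-n_4}$ absent but sits at an interior vertex of a longer path. We first try pumping down ($m=0$) instead: this gives $a^{p-s}b^{p+1-\ell}b^{-(p+2)}a^{p}$, with $n_3\ge n_2$ still holding. Since $vxy$ contains at most $p$ marked positions and $v$ lies in the first block, $y$ sits in the early part of the $b$-block, so $\ell\le p$. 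Membership in $K$ then requires $p-s<p+1-\ell$, that is $\ell\le s$. If $\ell\le s$ fails, pumping down already leaves $K$. If instead $\ell\le s$, pumping up with $m=2$ gives $n_1=p+s\ge p+1+\ell$ only when $s>\ell$, which is not quite enough. So we instead adjust the test word to $a^{p}b^{p+1}b^{-(2p+2)}a^{p}$. This keeps $n_3\ge n_2$ for all small pumps, and we choose $m$ so that $n_1-n_2$ changes sign. If the equality $s=\ell$ survives, we mark the $b$-block together with the first $a$-block, or else rebalance the exponents so that $s=\ell$ is excluded. This is the step whose bookkeeping we expect to be the hardest.
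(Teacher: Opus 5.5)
\textbf{There is a genuine gap: the case $y=b^{\ell}$ is never closed.} Your overall strategy can work: reduce to $\LGeo$ via Proposition~\ref{RGeo_LGeo}, intersect the complement with $a^+b^+(b^{-1})^+a^+$, and apply Ogden. But the fixes you sketch for $y=b^{\ell}$ are not carried out: a new test word, re-marking the $b$-block, or ``rebalancing'' to exclude $s=\ell$.

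\textbf{Why you get stuck: the regime $n_2>n_3$ is misdescribed.} There the Munn tree of $a^{n_1}b^{n_2}b^{-n_3}a^{n_4}$ is not a single $b$-path. It consists of three pieces:
\begin{itemize}
\item the $a$-path of length $n_1$ from $\alpha$ to $v$;
\item the $b$-path of length $n_2$ from $v$ up to its top $t$;
\item an $a$-path of length $n_4$, attached at the vertex of height $n_2-n_3\ge 1$ and ending at $\beta$.
\end{itemize}
The word is still geodesic, since no edge is crossed three times, and the leaves are exactly $\alpha,t,\beta$. By L4 it lies in $\LGeo$ iff $n_3+n_4\le n_1+(n_2-n_3)+n_4$, i.e.\ iff $n_1+n_2\ge 2n_3$.

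\textbf{With this, the case $y=b^{\ell}$ is immediate.} Here $n_3=p+2$ stays fixed while $n_1,n_2\to\infty$. So for $m\ge 3$ the pumped word has $n_2>n_3$ and $n_1+n_2\ge 2n_3$, hence lies in $\LGeo$ and not in $K$. In fact pumping up by a large exponent leaves $K$ in every case, because $n_1\to\infty$ and at most one of $n_2,n_3$ moves. So you need no pumping down, no new test word, and no discussion of $s=\ell$. Separately, your justification of $\ell\le p$ is wrong: Ogden's lemma bounds only the number of \emph{marked} positions in $vxy$, and the $b$-block is unmarked. With the corrected analysis you do not need that bound.

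\textbf{Comparison with the paper.} Once repaired, your argument is a self-contained Ogden computation, genuinely different from the paper's proof, which is a short reduction. Since $\RGeo=\LGeo^{-1}$, $\RGeo$ is co-context-free iff $\LGeo$ is. Co-context-free languages are closed under finite intersection, so if both were co-context-free, so would be $\DGeo=\RGeo\cap\LGeo$ (Proposition~\ref{inters_L_R}), contradicting Theorem~\ref{DGeo_not_CCF}. The paper's route reuses the work already done for $\DGeo$ and needs no new Munn-tree case analysis; you already had its first ingredient in hand. Your route, once completed, is independent of Theorem~\ref{DGeo_not_CCF} and its ShortLex bookkeeping, and it exhibits an explicit witness.
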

\begin{proof}
	By Proposition \ref{RGeo_LGeo}, $\RGeo(\FIM_X)$ is co-context-free if and only if $\LGeo(\FIM_X)$ is co-context-free.
	
	Now, by proposition \ref{inters_L_R}, if $\RGeo(\FIM_X)$ and $\LGeo(\FIM_X)$ were co-context-free, so it would be $\DGeo(\FIM_X)$, which is impossible by Theorem \ref{DGeo_not_CCF}.
	
	Hence, $\RGeo(\FIM_X)$ and $\LGeo(\FIM_X)$ are not co-context-free languages.
\end{proof}

\section{Monogenic Free Inverse Monoid}\label{Monogenico}
Taking into account the results of this paper for generating sets of size at least two, it is now natural to seek a description of the Green languages in the monogenic case.  

\begin{theorem}\label{thm_monog}
	Let $\FIM_X$ be the monogenic free inverse monoid generated by $X=\{a\}$. 
    Then, we have the following:
	\begin{enumerate}
		\item $\DGeo(X)$ is a regular language;
		\item $\RGeo(X)$ is a deterministic context-free (and so also co-context-free) language;
		\item $\LGeo(X)$ is a deterministic context-free (and so also co-context-free) language;
		\item $\HGeo(X)$ is a deterministic context-free (and so also co-context-free) language.
	\end{enumerate}
\end{theorem}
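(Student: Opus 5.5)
The plan is to exploit that in the monogenic case every Munn tree is a segment. I would parametrise the elements of $\FIM_{\{a\}}$ by triples $(m,n,k)$ with $m,n\ge 0$ and $-m\le k\le n$: the tree $\Gamma$ is the interval $[-m,n]$ of $\Z$, the initial vertex is $0$ and the final vertex is $k$. By G10, a geodesic crosses the $|k|$ edges of the main path once and the other $m+n-|k|$ edges twice, so the geodesic length is $2(m+n)-|k|$. By Theorem~\ref{Munn_Green}:
\begin{itemize}
\item the $\DD$-class of $(m,n,k)$ is determined by $N=m+n$;
\item the $\RR$-class is determined by $(m,n)$;
\item the $\LL$-class is determined by $(m-k,\,n-k)$, i.e.\ by the tree re-rooted at the final vertex.
\end{itemize}

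\textbf{Step 1: block form of geodesics.} Any word is $a^{\varepsilon_1 p_1}a^{\varepsilon_2 p_2}\cdots$ with alternating signs and $p_i>0$. I would show, using Lemma~\ref{passar3x} and Theorem~\ref{thm_co-cont} (non-geodesic iff some edge is crossed three times), that a geodesic has at most three blocks. With four blocks, the edge adjacent to the second turning point is crossed by blocks two, three and four. For words $a^{\varepsilon p}a^{-\varepsilon q}a^{\varepsilon r}$, the condition that no edge is crossed three times becomes a finite Boolean combination of linear inequalities in $p,q,r$. The main one is $q\ge p$ together with $r\le q-p$ or $p=0$, and similarly for two blocks.

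\textbf{Step 2: $\DGeo$ and $\RGeo$.} For $\DGeo$, D4 forces the main path to span the whole segment. So $|k|=N$, and either $m=0,k=n$ or $n=0,k=-m$. The only geodesics are then $a^N$ and $a^{-N}$, so
\[
\DGeo=a^*\cup (a^{-1})^*,
\]
which is regular. For $\RGeo$, R4 forces $|k|=\max(m,n)$. The geodesics are $a^{-m}a^{m+n}$ with $n\ge m$, or $a^{n}a^{-(n+m)}$ with $m\ge n$. Hence
\[
\RGeo=\{a^{-p}a^{q}: q\ge 2p\}\cup\{a^{p}a^{-q}: q\ge 2p\},
\]
where $p=0$ gives the pure powers. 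A deterministic one-counter automaton recognises this: it pushes twice per letter of the first block, pops during the second block, and accepts iff the counter reaches zero at or before the end. So $\RGeo$ is deterministic context-free and hence co-context-free, by the remarks in the preliminaries. Non-regularity follows from the Pumping Lemma applied to $a^{-p}a^{2p}$, pumping inside the first block.

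\textbf{Step 3: $\LGeo$ and $\HGeo$.} For $\LGeo$, Proposition~\ref{RGeo_LGeo} gives
\[
\LGeo=\RGeo^{-1}=\{a^{-q}a^{p}: q\ge 2p\}\cup\{a^{q}a^{-p}:q\ge 2p\}.
\]
Since deterministic context-free languages are not closed under reversal in general, I would build the automaton directly: count the first block, then pop two per letter of the second block, rejecting if the counter would underflow. For $\HGeo=\Geo$, the linear inequalities from Step~1 are recognised by a deterministic one-counter automaton in the same way. It reads the first block pushing $p$, pops during the second block until empty, then pushes the excess $q-p$, and during the third block pops while checking the bound on $r$. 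The degenerate cases $p=0$ and fewer blocks are handled in the finite control. Non-regularity is again by pumping.

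The main obstacle is Step~1: getting the exact list of inequalities characterising three-block geodesics, including boundary cases such as $p=0$ or $r=0$ and both sign patterns, without error. The other parts are routine once this description is fixed.
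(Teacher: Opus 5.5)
Your plan follows the paper's route. You reduce everything to a segment with marked vertices and read off $\DGeo=a^*\cup(a^{-1})^*$ from D4 and $\RGeo=\{a^{-p}a^{q},\,a^{p}a^{-q}: q\ge 2p\}$ from R4. You obtain $\LGeo$ from Proposition~\ref{RGeo_LGeo} but build its automaton directly, and you describe $\HGeo$ as the words with at most three blocks and $p+r\le q$ in the three-block case. Each language is then recognised by a deterministic one-counter automaton; your $\RGeo$ and $\HGeo$ machines are essentially those of Figures~\ref{fig:pushdown_R} and~\ref{fig:pushdown_H}. The one difference in method is how you get $\HGeo$: you use the criterion ``non-geodesic iff some edge is crossed three times'' (established in the proof of Theorem~\ref{thm_co-cont}), whereas the paper describes a shortest walk covering the chain. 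Your choice is a good one, since it characterises \emph{all} geodesics rather than exhibiting one per class.

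There is, however, a step that fails as written: your reason for excluding four blocks. Take $a\,a^{-3}a^{2}a^{-1}$, whose walk is $0\to 1\to -2\to 0\to -1$. The second turning point is the leaf $-2$, and its only edge $[-2,-1]$ is crossed by blocks two and three but not by block four, which stops at $-1$. The word is still non-geodesic, but only because of the edge $[-1,0]$.

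The correct argument needs a case split. Consider $a^{\varepsilon p_1}a^{-\varepsilon p_2}a^{\varepsilon p_3}a^{-\varepsilon p_4}$ with all $p_i\ge 1$. If $p_3\ge p_2$, the last edge of block one is crossed by blocks one, two and three. If $p_3<p_2$, the last edge of block three lies inside the span of block two and is crossed by blocks two, three and four. Equivalently, since factors of geodesics are geodesic, apply your three-block condition (for $p,q,r\ge 1$ it is exactly $p+r\le q$) twice. On blocks one to three it forces $p_3<p_2$, and on blocks two to four it forces $p_2<p_3$. With this repaired, and remembering that every one- and two-block word is geodesic, your description of $\HGeo$ coincides with the paper's and the rest goes through.

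A minor slip: re-rooting $[-m,n]$ at $k$ gives $[-(m+k),\,n-k]$, so the $\LL$-class is determined by $(m+k,\,n-k)$, not $(m-k,\,n-k)$; this is not used later. Also check two details in your $\RGeo$ automaton. It must accept the one-block words $a^{-p}$. It must also be in an accepting state exactly when the counter first returns to the bottom symbol; the paper pushes one fewer symbol and reads one extra letter on the bottom symbol to enter the accepting state, avoiding $\varepsilon$-moves.
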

\begin{proof}
    Firstly, recall that deterministic context-free languages are closed under complement. Hence, once proven that $\HGeo(\FIM_X)$, $\RGeo(\FIM_X)$ and $\LGeo(\FIM_X)$ are deterministic
context-free, it follows immediately that they are also co-context-free.

\medskip

\noindent $\bullet$ In the case of the \(\mathcal{R}\)-relation, each \(\mathcal{R}\)-class is uniquely determined by a chain of fixed length with a single rooted vertex (which corresponds to the initial vertex).
	
	\begin{center}
		\begin{tikzpicture}[
			vertex/.style={circle, draw, inner sep=2pt}, 
			>={Stealth[length=5pt]}
			]
	
			\node[vertex] (e)  at (0,0) {};
			\node[vertex] (a1) at (1.5,0) {};
			\node[vertex] (a2) at (3,0) {};
			\node[vertex] (a3) at (4.5,0) {};
			\node[vertex] (a4) at (6,0) {};

			\draw[->] (e) -- node[below] {$a$} (a1);
			\draw[->] (a1) -- node[below] {$a$} (a2);
			\draw[->] (a2) -- node[below] {$a$} (a3);
			\draw[->] (a3) -- node[below] {$a$} (a4);
			
			\draw[->, thick] (1.5,0.4) -- (a1);
			
		\end{tikzpicture}
	\end{center}	
	
	A minimal length representative of a \(\mathcal{R}\)-class is given by the word obtained by moving along the entire chain in the shortest possible way, starting at the rooted vertex. If the initial vertex is a leaf, then it is immediate that the shortest way to follow the chain is to proceed directly to the opposite end. Otherwise, one must first travel to one of the ends and then reverse direction, going along the whole chain to the other end. In order to minimize the total distance, we should first choose the direction leading to the nearest leaf, minimizing the number of edges visited twice.
		
	\begin{center}
		\begin{tikzpicture}[
			vertex/.style={circle, draw, inner sep=2pt},
			>={Stealth[length=5pt]}
			]
		
			\node[vertex] (e)  at (0,0) {};
			\node[vertex] (a1) at (1.5,0) {};
			\node[vertex] (a2) at (3,0) {};
			\node[vertex] (a3) at (4.5,0) {};
			\node[vertex] (a4) at (6,0) {};
		
			\draw[->] (e) -- node[below] {$a$} (a1);
			\draw[->] (a1) -- node[below] {$a$} (a2);
			\draw[->] (a2) -- node[below] {$a$} (a3);
			\draw[->] (a3) -- node[below] {$a$} (a4);
			
			\draw[->, thick] (1.5,0.4) -- (a1);
			
			\draw[->, blue, dashed, thick, bend left=35] (e) to (a4);
			\draw[->, blue, dashed, thick, bend left=40] (a1) to (e);
			
		\end{tikzpicture}
	\end{center}
	
	Therefore, we will cover twice at most half of the chain, leading to the inclusion
	$$
	\RGeo \subseteq \Big\{a^{-m}a^n \;\Big|\; m,n\in \N_0,\ m\leq \frac{n}{2}\Big\}\cup \Big\{a^ma^{-n} \;\Big|\; m,n\in \N_0,\ m\leq \frac{n}{2}\Big\}.
	$$
The other inclusion is immediate by the description of words in $\RGeo$ and that all the words in this language are geodesics (condition $G10$ is satisfied), hence we get the equality
\[
	\RGeo = \Big\{a^{-m}a^n \;\Big|\; m,n\in \N_0,\ m\leq \frac{n}{2}\Big\}\cup \Big\{a^ma^{-n} \;\Big|\; m,n\in \N_0,\ m\leq \frac{n}{2}\Big\}.
\]
Let us prove that the following deterministic pushdown automaton recognizes this language.
\begin{center}
	\begin{tikzpicture}[scale=.7, thick]	
		\node[state, accepting, initial left, initial distance=6mm, minimum size=15pt] (v0) at (0,0) {};
		\node[state, accepting, minimum size=15pt] (v1) at (4,2) {};
		\node[state, minimum size=15pt] (v2) at (9,2) {};
		\node[state, accepting, minimum size=15pt] (v3) at (14,2) {};
        
		\path (v0)  edge[-latex] node[sloped, above] {$(a,\#,x\#)$} (v1)
		(v1) edge [loop above, -latex] node [above] {$(a, x, xxx)$}  (v1)
        (v1) edge[-latex] node [above] {$(a^{-1}, x, \varepsilon)$} (v2)
		(v2) edge [loop above, -latex] node [above] {$(a^{-1}, x, \varepsilon)$} (v2)
		(v2) edge [-latex] node [above] {$(a^{-1}, \#, \#)$} (v3)
        (v3) edge [loop above, -latex] node [above] {$(a^{-1}, \#, \#)$} (v3);

        \node[state, accepting, minimum size=15pt] (u1) at (4,-2) {};
		\node[state, minimum size=15pt] (u2) at (9,-2) {};
		\node[state, accepting, minimum size=15pt] (u3) at (14,-2) {};
        
        \path (v0)  edge[-latex] node[sloped, below] {$(a^{-1},\#,x\#)$} (u1)
		(u1) edge [loop above, -latex] node [above] {$(a^{-1}, x, xxx)$}  (u1)
        (u1) edge[-latex] node [above] {$(a, x, \varepsilon)$} (u2)
		(u2) edge [loop above, -latex] node [above] {$(a, x, \varepsilon)$} (u2)
		(u2) edge [-latex] node [above] {$(a, \#, \#)$} (u3)
        (u3) edge [loop above, -latex] node [above] {$(a, \#, \#)$} (u3);
	\end{tikzpicture}
    \captionsetup{font=small}
    \captionof{figure}{A (one-counter) deterministic pushdown automaton recognizing $\RGeo$.}\label{fig:pushdown_R}
\end{center}

  Firstly observe that the initial and the first state of each branch are accepting states, and so the automaton accepts the empty word and all pure powers of $a$ and $a^{-1}$. These are precisely the words in $\RGeo$ obtained by taking $m=0$.

Focusing on the words accepted by the last state of the upper branch, after reading $a^m$, $m\geq 1$, the stack is $x^{2m-1}\#$. In fact, the first transition pushes one $x$, and the subsequent transition labelled by $a$ increases the number of $x$'s by two. Now, the automaton then reads $2m-1$ copies of $a^{-1}$, each popping one $x$, followed by one further $a^{-1}$ to enter the rightmost accepting state. The loop at that state permits any number of additional copies of $a^{-1}$. Thus the mixed words accepted along the upper branch are exactly
$$\Big\{a^ma^{-n} \;\Big|\; m,n\in \N_0,\ m\leq \frac{n}{2}\Big\}.$$

The lower branch is symmetric: it accepts every pure word $a^{-m}$, with $m\ge1$, and its mixed accepted words are exactly
\[
\Big\{ a^{-m}a^n \;\Big|\; m,n\in \N_0,\ m\leq \frac{n}{2} \Big\}.
\]
Combining the two branches and the empty word gives precisely $\RGeo$, as intended.

\medskip

\noindent $\bullet$ Regarding the $\mathcal{L}$-relation, by Proposition \ref{RGeo_LGeo}, we get that   	
		$$
	\LGeo = \Big\{a^{-n}a^m \;\Big|\; m,n\in \N_0,\ m\leq \frac{n}{2}\Big\}\cup \Big\{a^na^{-m} \;\Big|\; m,n\in \N_0,\ m\leq \frac{n}{2}\Big\}. 
	$$
Let us prove that the following deterministic pushdown automaton recognizes this language.

\begin{center}
	\begin{tikzpicture}[scale=.7, thick]	
		\node[state, accepting, initial above, initial distance=6mm, minimum size=15pt] (v0) at (0,2) {};
		\node[state, accepting, minimum size=15pt] (v1) at (4,2) {};
		\node[state, accepting, minimum size=15pt] (v2) at (9,2) {};
		\node[state, accepting, minimum size=15pt] (v3) at (6.5,-2) {};
        
		\path (v0)  edge[-latex] node[sloped, above] {$(a,\#,\#)$} (v1)
		(v1) edge [bend left, -latex] node [above, text width=45pt] {$(a, \#, x\#)$ $(a,x,xx)$}  (v2)
        (v2) edge[bend left, -latex] node[below] {$(a,x,x)$} (v1)
		(v1) edge[-latex] node[sloped, below] {$(a^{-1}, x, \varepsilon)$} (v3)
        (v2) edge[-latex] node[sloped, below] {$(a^{-1}, x, \varepsilon)$} (v3)
        (v3) edge[loop below,-latex] node[below] {$(a^{-1}, x, \varepsilon)$} (v3);

        \node[state, accepting, minimum size=15pt] (u1) at (-4,2) {};
		\node[state, accepting, minimum size=15pt] (u2) at (-9,2) {};
		\node[state, accepting, minimum size=15pt] (u3) at (-6.5,-2) {};
        
		\path (v0)  edge[-latex] node[sloped, above] {$(a^{-1},\#,\#)$} (u1)
		(u1) edge [bend right, -latex] node [above, text width=55pt] {$(a^{-1}, \#, x\#)$ $(a^{-1},x,xx)$}  (u2)
        (u2) edge[bend right, -latex] node[below] {$(a^{-1},x,x)$} (u1)
		(u1) edge[-latex] node[sloped, below] {$(a, x, \varepsilon)$} (u3)
        (u2) edge[-latex] node[sloped, below] {$(a, x, \varepsilon)$} (u3)
        (u3) edge[loop below, -latex] node[below] {$(a, x, \varepsilon)$} (u3);
	\end{tikzpicture}
    \captionsetup{font=small}
    \captionof{figure}{A (one-counter) deterministic pushdown automaton recognizing $\LGeo$.}\label{fig:pushdown_L}
\end{center}

Let $\mathcal A$ denote the automaton in the figure, with initial
stack symbol $\#$ and acceptance by final state. We show that
$L(\mathcal A)=\LGeo$. Since the initial state is accepting,
$\varepsilon \in L(\mathcal A)$.

Consider the right-hand branch. After reading $a^n$, where $n \geq 1$,
the stack is $x^{\lfloor n/2 \rfloor}\#$. Indeed, the first transition
leaves the stack unchanged. Subsequently, the automaton alternates
between its two upper states, pushing one $x$ upon reading each
even-numbered $a$ and leaving the stack unchanged upon reading each
odd-numbered $a$. Both states are accepting, so every word $a^n$
is accepted.

To read $a^{-1}$, the automaton must move to the lower state,
popping one $x$. Each subsequent $a^{-1}$ also pops one $x$,
and no transition is available when the stack top is $\#$.
Since the lower state is accepting, a word $a^n a^{-m}$,
with $n,m \geq 1$, is therefore accepted if and only if
$$
m \leq \ \frac{n}{2}.
$$
 Moreover, the transitions ensure
that no other form of word is accepted along this branch.

By symmetry, the nonempty words accepted along the left-hand
branch are precisely the words $a^{-n}a^m$ with $n \geq 1$
and $0 \leq m \leq n/2$. Together with the empty word, this yields
\[
L(\mathcal A)
=
\Big\{a^{-n}a^m \;\Big|\; m,n \in \mathbb N_0,\,
m \leq \frac{n}{2}\Big\}
\cup
\Big\{a^n a^{-m} \;\Big|\; m,n \in \mathbb N_0,\,
m \leq \frac{n}{2}\Big\}
=
\LGeo.
\]

\noindent $\bullet$ Regarding $\HGeo$, we already know that the language is context-free (Theorem \ref{geo_CF}) and co-context-free (Theorem \ref{thm_co-cont}). In the monogenic case, the language is given by
\[
\HGeo = \Bigl\{ a^ma^{-n}a^p \;\Big|\; m,n,p\in\N_0,\; m+p\le n \Bigr\} \cup \Bigl\{ a^{-m}a^n a^{-p} \;\Big|\; m,n,p\in\N_0,\; m+p\le n \Bigr\}.
\]

Indeed, observe that each \(\mathcal{H}\)-class is uniquely determined by a chain of fixed length with a two rooted vertices (corresponding to the initial and final vertices).

\begin{center}
		\begin{tikzpicture}[
			vertex/.style={circle, draw, inner sep=2pt}, 
			>={Stealth[length=5pt]}
			]

			\node[vertex] (e)  at (0,0) {};
			\node[vertex] (a1) at (1.5,0) {};
			\node[vertex] (a2) at (3,0) {};
			\node[vertex] (a3) at (4.5,0) {};
			\node[vertex] (a4) at (6,0) {};

			\draw[->] (e) -- node[below] {$a$} (a1);
			\draw[->] (a1) -- node[below] {$a$} (a2);
			\draw[->] (a2) -- node[below] {$a$} (a3);
			\draw[->] (a3) -- node[below] {$a$} (a4);

			\draw[->, thick] (1.5,0.4) -- (a1);
			\draw[->, thick] (a3) -- (4.5,0.4);
		\end{tikzpicture}
	\end{center}	

A minimal length representative of an \(\mathcal{H}\)-class is obtained as follows. Starting at the initial vertex, we move along the chain in the direction opposite to the final vertex until reaching the corresponding leaf, provided that the initial vertex is not itself a leaf. We then traverse the entire chain to the opposite leaf and, if this leaf is not the final vertex, move back along the chain until reaching the final vertex.

\begin{center}
		\begin{tikzpicture}[
			vertex/.style={circle, draw, inner sep=2pt},
			>={Stealth[length=5pt]}
			]

			\node[vertex] (e)  at (0,0) {};
			\node[vertex] (a1) at (1.5,0) {};
			\node[vertex] (a2) at (3,0) {};
			\node[vertex] (a3) at (4.5,0) {};
			\node[vertex] (a4) at (6,0) {};

			\draw[->] (e) -- node[below] {$a$} (a1);
			\draw[->] (a1) -- node[below] {$a$} (a2);
			\draw[->] (a2) -- node[below] {$a$} (a3);
			\draw[->] (a3) -- node[below] {$a$} (a4);

			\draw[->, thick] (1.5,0.4) -- (a1);
            \draw[->, thick] (a3) -- (4.5,0.4);

			\draw[->, blue, dashed, thick, bend left=35] (e) to (a4);
			\draw[->, blue, dashed, thick, bend left=40] (a1) to (e);
			\draw[->, blue, dashed, thick, bend left=40] (a4) to (a3);
		\end{tikzpicture}
	\end{center}

    This leads to the following conclusion.
\[
\HGeo\subseteq
\Bigl\{ a^ma^{-n}a^p \;\Big|\; m,n,p\in\N_0,\; m+p\le n \Bigr\} \cup \Bigl\{ a^{-m}a^n a^{-p} \;\Big|\; m,n,p\in\N_0,\; m+p\le n \Bigr\}.
\]

The other inclusion is immediate by the description of words in $\HGeo$ (see condition $G10$), hence we get the intended equality.
    
Let us prove that the following deterministic pushdown automaton recognizes this language.
\begin{center}
	\begin{tikzpicture}[scale=.6, thick]	
		\node[state, accepting, initial left, initial distance=6mm, minimum size=15pt] (v0) at (0,0) {};
		\node[state, accepting, minimum size=15pt] (v1) at (4,2) {};
		\node[state, accepting, minimum size=15pt] (v2) at (9,2) {};
		\node[state, accepting, minimum size=15pt] (v3) at (14,2) {};
        \node[state, accepting, minimum size=15pt] (v4) at (19,2) {};
        
		\path (v0)  edge[-latex] node[sloped, above] {$(a,\#,x\#)$} (v1)
		(v1) edge [loop above, -latex] node [above] {$(a, x, xx)$}  (v1)
        (v1) edge[-latex] node [above] {$(a^{-1}, x, \varepsilon)$} (v2)
		(v2) edge [loop above, -latex] node [above] {$(a^{-1}, x, \varepsilon)$} (v2)
		(v2) edge [-latex] node [above] {$(a^{-1}, \#, x\#)$} (v3)
        (v3) edge [loop above, -latex] node [above] {$(a^{-1}, x, xx)$} (v3)
        (v3) edge [-latex] node [above] {$(a, x, \varepsilon)$} (v4)
        (v4) edge [loop above, -latex] node [above] {$(a, x, \varepsilon)$} (v4);

        \node[state, accepting, minimum size=15pt] (u1) at (4,-2) {};
		\node[state, accepting, minimum size=15pt] (u2) at (9,-2) {};
		\node[state, accepting, minimum size=15pt] (u3) at (14,-2) {};
        \node[state, accepting, minimum size=15pt] (u4) at (19,-2) {};
        
        \path (v0)  edge[-latex] node[sloped, below] {$(a^{-1},\#,x\#)$} (u1)
		(u1) edge [loop above, -latex] node [above] {$(a^{-1}, x, xx)$}  (u1)
        (u1) edge[-latex] node [above] {$(a, x, \varepsilon)$} (u2)
		(u2) edge [loop above, -latex] node [above] {$(a, x, \varepsilon)$} (u2)
		(u2) edge [-latex] node [above] {$(a, \#, x\#)$} (u3)
        (u3) edge [loop above, -latex] node [above] {$(a, x, xx)$} (u3)
        (u3) edge [-latex] node [above] {$(a^{-1}, x, \varepsilon)$} (u4)
        (u4) edge [loop above, -latex] node [above] {$(a^{-1}, x, \varepsilon)$} (u4);
	\end{tikzpicture}
    \captionsetup{font=small}
    \captionof{figure}{A (one-counter) deterministic pushdown automaton recognizing $\HGeo$.}\label{fig:pushdown_H}
\end{center}
First the start state accepts $\varepsilon$. We next focus on the top branch. The first state accepts words of the form $a^m$ for $m>0$, the second accepts words $a^na^{-p}$ for $0<p\le n$, and the third accepts words $a^ma^{-n}$ for $0<m<n$. At that point, the stack reads $x^{n-m}\#$ and any additional letter $a$ read gets us onto the fourth state and removes one $x$, accepting all words $a^ma^{-n}a^p$ with $p\le n-m$.
The lower branch is analogous, exchanging the signs.

Therefore, this automaton accepts the words
\begin{equation*}
    \begin{aligned}
        \{\varepsilon\}&\cup\{a^m|\ m>0\}\cup \{a^na^{-p}|\ 0<p\le n\}\cup \{a^ma^{-n}|\ 0<m<n\} \cup \{a^ma^{-n}a^p|\ m+p\le n\} \\
        &\cup \{a^{-m}|\ m>0\}\cup \{a^{-n}a^{p}|\ 0<p\le n\}\cup \{a^{-m}a^{n}|\ 0<m<n\} \cup \{a^{-m}a^{n}a^{-p}|\ m+p\le n\},
    \end{aligned}
\end{equation*}
which clearly equals $\HGeo$.
\medskip

\noindent $\bullet$ Focusing now on the $\mathcal{D}$-classes, we restrict our attention to chains without rooted vertices. Consequently, we consider only chains of varying lengths. Each $\mathcal{D}$-class is uniquely determined by a chain of fixed length. \vspace*{2mm}
	
	\begin{center}
		\begin{tikzpicture}[
			vertex/.style={circle, draw, inner sep=2pt}, 
			>={Stealth[length=5pt]}
			]
			
			\node[vertex] (e)  at (0,0) {};
			\node[vertex] (a1) at (1.5,0) {};
			\node[vertex] (a2) at (3,0) {};
			\node[vertex] (a3) at (4.5,0) {};
			\node[vertex] (a4) at (6,0) {};
			
			\draw[->] (e) -- node[below] {$a$} (a1);
			\draw[->] (a1) -- node[below] {$a$} (a2);
			\draw[->] (a2) -- node[below] {$a$} (a3);
			\draw[->] (a3) -- node[below] {$a$} (a4);
			
		\end{tikzpicture}
	\end{center}
	
	For each such chain, the shortest way to go along the whole chain is to start at one of the leafs and proceed to the opposite end, yielding words of the form $a^{n}$ with $n \in \mathbb{Z}$. In this way, with the exception of the $\mathcal{D}$-class of the identity element, every $\mathcal{D}$-class has exactly two geodesics of minimal length, namely $a^{n}$ and $a^{-n}$ for some $n \in \mathbb{N}$. Hence,
	\[
	\DGeo= a^{*} \cup (a^{-1})^{*},
	\]
	which is a regular language.
\end{proof}

\begin{remark}
    The languages $\RGeo$, $\LGeo$ and $\HGeo$ described above are not regular. 

    \smallskip
    
    Indeed, suppose first that $\RGeo$ is regular, and let $p$ be its pumping length. Consider the word $w=a^{-p}a^{2p}\in \RGeo$. For any decomposition $w=xyz$ satisfying $|xy|\leq p$ and $|y|>0$, we have $y=a^{-k}$ for some $1\leq k\leq p$. Pumping $y$ once gives $xy^2z=a^{-(p+k)}a^{2p}$. For this word to belong to $\RGeo$, we would need $p+k\leq \frac{2p}{2}=p,$ which is impossible. Hence $xy^2z\notin\RGeo$, contradicting the pumping lemma. Therefore $\RGeo$ is not regular.

    \smallskip

    Since the class of regular languages is closed under reversal and $\LGeo=\RGeo^{-1}$ (Lemma \ref{RGeo_LGeo}), the result extends directly to $\LGeo$.

    \smallskip
    
    Finally, suppose that $\HGeo$ is regular, with pumping length $p$. Consider the word $w=a^pa^{-2p}a^p\in\HGeo$. Again, for any decomposition $w=xyz$ with $|xy|\le p$ and $|y|>0$, we have $y=a^{k}$ for some $1\le k\le p$, and therefore $xy^2z=a^{p+k}a^{-2p}a^p\notin \HGeo$ (since $2p<(p+k)+p$), contradicting the pumping lemma.
\end{remark}

\section{Growth tightness of Free Inverse Monoids}

In order to prove growth tightness of $\FIM_X$, we translate the question into an analogous question on languages where more is know (eg.\ \cite{Growth_sensitive}) under the name ``growth sensitivity".
\begin{definition}
	Given a language $L\subseteq \Sigma^*$ and $f\in \Sigma^*$, we define
	\begin{itemize}[leftmargin=5mm]
		\item $\Sub(L)$ is the set of subwords of $L$, i.e. $\Sub(L) = \bigl\{ v\in\Sigma^* \mid \exists u,w\in\Sigma^*, uvw\in L\bigr\}$, and
		\item $L^f = L\setminus \Sigma^*f\Sigma^*$ the set of words of $L$ which do not contain the forbidden subword $f$.
	\end{itemize}
	We also define some notations in relation to growth:
	\begin{itemize}[leftmargin=5mm]
		\item $\gamma_L(n)=\#(L\cap\Sigma^n)$ is the \emph{growth function} of $L$, and
		\item $\alpha_L=\limsup_{n\to\infty}\sqrt[n]{\gamma_L(n)}$ is the (exponential) \emph{growth rate} of $L$. 
	\end{itemize}
	A language is \emph{growth sensitive} if $\alpha(L)>\alpha(L^f)$ for all $f\in\Sub(L)$.
\end{definition}
The basic idea is to consider $L=\SL(\FIM_X)$ and prove that this language is growth sensitive. (However, we will not quite be able to prove this.) Suppose this were true, and consider any proper quotient $\pi\colon(\FIM_X,X)\to (Q,X)$. There exists $g_1,g_2\in\FIM_X$ such that $g_1\ne g_2$ in $\FIM_X$ but $\pi g_1=\pi g_2$ in $Q$. Consider $w_1,w_2\in\SL(\FIM_X)$ the ShortLex representatives of $g_1,g_2$. If $w_1<w_2$ in the ShortLex order, then $\SL(Q,X)\subseteq L^{w_2}$ hence
\[ \alpha(Q,X) = \alpha(\SL(Q,X)) \le \alpha(L^{w_2}) < \alpha(L) = \alpha(\FIM_X).\]

A general result in that direction is the following:
\begin{theorem}[{\cite[Theorem 2(B)]{Growth_sensitive}}] \label{thm:irred_growth_sensitive}
	Consider a language $L$ generated by an irreducible unambiguous context-free grammar, and $f\in\Sub(L)$. Then $\alpha(L^f) < \alpha(L)$.
\end{theorem}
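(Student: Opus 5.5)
The plan is to translate the statement into a statement about the analytic behaviour of the generating functions defined by the grammar. First I record what the grammar gives for $L$. Write $G=(V,\Sigma,R,S)$ for the irreducible unambiguous grammar. Unambiguity means that the generating series $y_A(z)=\sum_{w:\,A\to^* w} z^{\ell(w)}$ satisfy a positive polynomial system $\mathbf y=\mathbf P(z,\mathbf y)$, with one equation per production set. Irreducibility (strong connectivity of $D(G)$ together with non-linearity) is exactly the hypothesis of the Drmota--Lalley--Woods theorem. So all the $y_A$ have the same radius of convergence $\rho=1/\alpha(L)$, and each is finite at $z=\rho$. Moreover, at $(\rho,\mathbf y(\rho))$ the Jacobian $J=\partial\mathbf P/\partial\mathbf y$ is a non-negative irreducible matrix with Perron eigenvalue exactly $1$. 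This criticality is what produces the square-root singularity at $\rho$.

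Next I encode $L^f$ by an unambiguous grammar. Let $\mathcal A$ be the minimal deterministic automaton (of Knuth--Morris--Pratt type) recognising $\Sigma^*f\Sigma^*$. It has a sink state $\dagger$, reached exactly once $f$ has been read. I form the standard triple construction with nonterminals $A_{p,q}$, for $A\in V$ and states $p,q$ of $\mathcal A$. The nonterminal $A_{p,q}$ derives those $w$ with $A\to^*w$ and $p\cdot w=q$. Determinism of $\mathcal A$ and unambiguity of $G$ make the new grammar unambiguous. I then delete every nonterminal with $q=\dagger$ (or $p=\dagger$); what remains generates $L^f$. The associated series satisfy $y^f_{A,p,q}\le y_A$ coefficientwise. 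Hence they converge, and are finite, at $z=\rho$.

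The key step is to show that, at $z=\rho$, every strongly connected component of the dependency graph of the pruned system has Jacobian spectral radius strictly less than $1$. Consider first the unpruned product system, where the sink is kept. Its Jacobian at $\rho$ is a lift of $J$: summing over target states recovers $J$, so its spectral radius is at most $1$. The pruned system is a principal submatrix of this lift, obtained by removing every entry that leads to $\dagger$.

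Because $f\in\Sub(L)$, and $D(G)$ is strongly connected and non-linear, I expect to show the following. For every $A$ and every state $p\neq\dagger$ there is a derivation from $A$ that, started in state $p$, passes through $\dagger$. Concretely, I pump $A\to^*\alpha A\beta A\gamma$ and route one copy of $A$ through a nonterminal that produces $f$.

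It follows that in each component some mass of the lifted matrix is lost. The lift, restricted to a component, is therefore strictly dominated by a matrix whose spectral radius is at most $1$. By the Perron--Frobenius strict-monotonicity property for irreducible non-negative matrices, its spectral radius is then $<1$. This is the step I expect to be the main obstacle. The lifted product graph need not be strongly connected, and one must check carefully that every component, not merely the top one, actually loses mass.

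Finally, invertibility of $I-J^f$ at $(\rho,\mathbf y^f(\rho))$ lets me apply the analytic implicit function theorem, component by component in topological order. This shows that every $y^f_{A,p,q}$, and in particular $y^f_{S,q_0,\cdot}$, extends analytically to a neighbourhood of $z=\rho$. By Pringsheim's theorem the radius of convergence of a series with non-negative coefficients is its dominant singularity. The radius for $L^f$ therefore exceeds $\rho$, and so $\alpha(L^f)<\alpha(L)$.
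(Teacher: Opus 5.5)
There is a genuine gap, at the step you flag yourself, and the mechanism you propose for it cannot work as stated. For $q\neq\dagger$ the polynomial $P_{A,p,q}$ involves only non-sink variables, because $\dagger$ is absorbing. So at $z=\rho$ the full lifted Jacobian $\tilde J$ is block triangular. The pruned Jacobian $\tilde J_N$, and each of its strongly connected blocks, is a diagonal block of $\tilde J$ with exactly the same entries. Nothing has been removed from a component's matrix. There is therefore no strictly larger irreducible matrix of spectral radius $\le 1$ on the same index set to which strict Perron--Frobenius monotonicity could be applied. Being a diagonal block of the reducible matrix $\tilde J$ only gives $\rho(\tilde J_N)\le 1$. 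The lost weight sits in the rows $(A,p,\dagger)$, which lie outside every pruned component. Hence the fact that every $A$ can, from every $p$, derive a word reaching $\dagger$ does not by itself give $\rho<1$ on any component (spine steps through linear productions lose nothing at all). As written, the only non-routine step of the proof is asserted rather than proved.

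The missing idea is to compare with the prefix-state fibration rather than with the triple system. Put $c(A,p;B,r):=\sum_q\tilde J_{(A,p,q),(B,r,s)}$. This has three properties:
\begin{itemize}
\item it is independent of $s$, because in the unpruned lift the suffix weights summed over end states give the product of the $y$'s whatever the start state;
\item it satisfies $\sum_r c(A,p;B,r)=J_{A,B}$;
\item by induction, $\sum_q(\tilde J^n)_{(A,p,q),(B,r,s)}=(c^n)_{(A,p),(B,r)}$.
\end{itemize}
Non-sink rows of $\tilde J$ see only non-sink columns, and $\dagger$ is absorbing in $c$. Hence, writing $c_N$ for $c$ restricted to non-sink pairs $(A,p)$, you get the entrywise bound $(\tilde J_N^n)_{(A,p,q),(B,r,s)}\le (c_N^n)_{(A,p),(B,r)}$, and so $\rho(\tilde J_N)\le\rho(c_N)$.

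Now let $\mathbf u>0$ be the Perron vector of $J$. The vector $\hat u_{(B,r)}:=u_B$ satisfies $c\hat{\mathbf u}=\hat{\mathbf u}$. Your pumping idea, made precise, shows that some $(C,\dagger)$ is reachable in $c$ from every non-sink $(A,p)$: walk to a production $D\to\alpha_1E_1\alpha_2E_2\alpha_3$, and let $E_1$ (which by strong connectivity derives a word containing $f$) drive the state to $\dagger$ before $E_2$. Consequently every strongly connected class of $c_N$ (other than a loopless singleton, whose block is $0$) has a row leaking out of it. On each such class $\hat{\mathbf u}$ is strictly subinvariant, so $\rho(c_N)<1$. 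This gives $\rho(\tilde J_N)<1$ at $(\rho,\mathbf y^f(\rho))$ for the whole pruned system, after which your implicit function theorem and Pringsheim argument finishes the proof.
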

However, the grammar producing $\SL(\FIM_X)$ is not irreducible. Instead, we will look into the associated growth series (as in the proof of Theorem \ref{thm:irred_growth_sensitive}) to prove our result. In particular, we will study the type of singularities of these series:
\begin{definition}
	Consider a formal series $\Gamma(z) = \sum_{n=0}^{\infty} a_n\cdot z^n$ with coefficients $a_n\ge 0$.
	\begin{itemize}[leftmargin=5mm]
		\item The \emph{growth rate} of its coefficients is $\alpha=\limsup_{n\to\infty}\sqrt[n]{a_n}$.
		\item The \emph{radius of convergence} is $\rho=1/\alpha$.
		\item The series is \emph{convergent} if $\Gamma(\rho)<\infty$, and \emph{divergent} if $\Gamma(\rho)=\infty$. 
        \item A point $z=z_0$ a \emph{singularity} of $\Gamma$ if $\Gamma(z)$ cannot be extended as an analytic function in a neighbourhood of $z_0$.
        \item A singularity at $z=z_0$ is called a \emph{square-root singularity} if there exist constants $A,B\in\mathbb{C}$ such that, as $z\to\rho$,
        \[
        \Gamma(z)=A + B\sqrt{1-\frac{z}{\rho}} + O\!\left(1-\frac{z}{\rho}\right).
        \]
	\end{itemize}
\end{definition}
We will use two results on singularities of series:
\begin{theorem}[{Drmota-Lalley-Woods, see \cite[Theorem VII.5]{Flajolet}}] \label{thm:DLW}
	Consider a language $L$ generated by an irreducible unambiguous context-free grammar. Then the growth series $\Gamma_L(z)$ has a square-root singularity at the radius of convergence $\rho$, that is,
	\[ \Gamma_L(z) = A + B \sqrt{1-\frac z\rho} + O\left(1-\frac z\rho\right) \quad\text{when }z\to\rho. \]
	In particular, the series converges at $z=\rho$.
\end{theorem}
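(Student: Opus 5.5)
The plan is the standard analytic route: pass from the grammar to a positive polynomial system, locate the singularity where the Jacobian reaches spectral radius one, and extract a square-root expansion there from a quadratic degeneracy. Concretely, let $G=(V,\Sigma,R,S)$ be an irreducible unambiguous grammar with $V=\{A_1,\dots,A_m\}$ and $A_1=S$. For each $A_i$ let $y_i(z)=\sum_n \#\{w\in\Sigma^n : A_i\to^* w\}\, z^n$. Unambiguity makes each word count once, so the productions give a system
\[
y = P(z,y), \qquad y=(y_1,\dots,y_m),
\]
where each $P_i$ is a polynomial with nonnegative coefficients. Here $P_i$ is the sum over rules $A_i\to\alpha$ of $z^{\#\text{terminals in }\alpha}\prod_{A_j\in\alpha} y_j$. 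Then $\Gamma_L=y_1$, and the $y_i$ are the unique power-series solution, namely the limit of the iterates $y^{(k+1)}=P(z,y^{(k)})$ starting from $y^{(0)}=0$. Since the grammar is reduced, every $y_i$ is a nonzero series.

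\textbf{Step 1: irreducibility of the Jacobian.} Consider the Jacobian $J(z,y)=\bigl(\partial P_i/\partial y_j\bigr)$, which has nonnegative entries for $z,y\ge 0$. Ergodicity (strong connectivity of $D(G)$) means $J(z,y(z))$ is an irreducible nonnegative matrix for $0<z<\rho$. Perron--Frobenius then gives a simple dominant eigenvalue $\lambda(z)$ with positive left and right eigenvectors. A first consequence is that all $y_i$ share the same radius of convergence $\rho$, since each $y_i$ dominates a multiple of each $y_j$ times a power of $z$. A second is that $\rho<\infty$ by non-linearity.

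\textbf{Step 2: locating the singularity.} For $0<z<\rho$ I will show $\lambda(z)<1$. This follows by comparing $y$ with the iterates and using $y=P(z,y)\ge J\cdot y$ componentwise on the positive cone. Two facts are then needed:
\begin{itemize}
\item $y(\rho)$ is finite. Non-linearity forces superlinear growth of $P$ along the Perron direction, so if $y(z)\to\infty$ the fixed-point equation projected onto the left eigenvector would be violated.
\item $\lambda(\rho)=1$. If instead $\lambda(\rho)<1$, the implicit function theorem would continue $y$ analytically past $\rho$, which contradicts Pringsheim's theorem, since the series has nonnegative coefficients and must be singular at $\rho$.
\end{itemize}
Finiteness of $y(\rho)$ immediately gives the convergence claim.

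\textbf{Step 3: square-root expansion (the main obstacle).} At $(\rho,y(\rho))$ the matrix $I-J$ has a one-dimensional kernel spanned by the Perron vector $v$; let $u$ be the left Perron vector. I reduce the system to one scalar equation, either by a Lyapunov--Schmidt decomposition or by eliminating variables and applying Weierstrass preparation. Write $y=y(\rho)+t\,v+(\text{transversal part})$, where the transversal part is solved analytically. Projecting onto $u$ leaves a scalar equation $F(z,t)=0$ with
\[
F(\rho,0)=0,\qquad F_t(\rho,0)=0,\qquad F_z(\rho,0)=u^{T}P_z>0,\qquad F_{tt}(\rho,0)=u^{T}P_{yy}[v,v]>0.
\]
The last two signs come from positivity of the coefficients, together with a genuine quadratic term, which is supplied exactly by non-linearity. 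Solving $F=0$ then gives $t=-c\sqrt{1-z/\rho}+O(1-z/\rho)$ with $c>0$, and hence $y_1(z)=A+B\sqrt{1-z/\rho}+O(1-z/\rho)$.

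The delicate points are the strict positivity of $F_{tt}$ and checking that the analytic continuation chooses the correct branch. The strict positivity of $F_{tt}$ needs non-linearity combined with strong connectivity to ensure that some quadratic monomial sees the Perron direction. For the branch, I would follow the proof in Flajolet--Sedgewick, Theorem VII.5, where aperiodicity is not needed for the local statement at $z=\rho$ itself.
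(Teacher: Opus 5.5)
The paper gives no proof of this statement; it imports it from \cite[Theorem VII.5]{Flajolet}. Your outline (positive polynomial system, Perron--Frobenius for the Jacobian, $\lambda(\rho)=1$ via Pringsheim and the implicit function theorem, then a quadratic degeneracy) is the standard proof found there. One step, however, fails as written.

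In Step~2 you obtain $\lambda(z)<1$ from the componentwise inequality $y=P(z,y)\ge J(z,y)\,y$, and this inequality is false in general. By Euler's identity, $(Jy)_i=\sum_m \deg_y(m)\,m$, the sum running over the monomials $m$ of $P_i$. So every monomial of $y$-degree at least $2$ (precisely the terms non-linearity provides) pushes $(Jy)_i$ above $P_i$. Take the irreducible unambiguous grammar $A_1\to aA_2A_2$, $A_2\to b\mid bA_1$. It gives $y_1=zy_2^2$, $y_2=z+zy_1$, and $(Jy)_1=2zy_2^2=2y_1>y_1$ for every $z\in(0,\rho)$.

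This matters. Your Pringsheim argument only rules out $\lambda(\rho)<1$, so it tacitly uses $\lambda(\rho)\le 1$. Without that, the eigenvalue $1$ of $J(\rho,y(\rho))$ forced by Pringsheim could be a non-Perron eigenvalue. Its kernel would then not be spanned by a positive vector, and the signs $F_z>0$, $F_{tt}>0$ in Step~3 would be lost.

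The repair is to run the subinvariance argument on $y'(z)$ rather than on $y(z)$. Differentiating $y(z)=P(z,y(z))$ on $(0,\rho)$ gives
\[
y'(z)=P_z\bigl(z,y(z)\bigr)+J\bigl(z,y(z)\bigr)\,y'(z).
\]
Here $y'(z)>0$ componentwise, since each $y_i$ is a non-constant series with nonnegative coefficients. Also $P_z\ge 0$ is nonzero, since some rule contains a terminal. Pairing with the positive left Perron vector $u$ of $J(z,y(z))$ yields $\lambda(z)\,u^{T}y'=u^{T}y'-u^{T}P_z<u^{T}y'$, so $\lambda(z)<1$. Then $\lambda(\rho)\le 1$ follows by continuity, once $y(\rho)<\infty$ is established.

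For that finiteness, projecting onto a left eigenvector is awkward, because the eigenvector depends on the very $y(z)$ that might blow up. It is simpler to argue componentwise:
\begin{itemize}
\item Reducedness and strong connectivity give terminal words $\sigma,\tau$ with $\sigma L(A_i)\tau\subseteq L(A_j)$. Hence $y_j(z)\ge(\rho/2)^{|\sigma\tau|}y_i(z)$ for $z\in[\rho/2,\rho)$, so all components are comparable.
\item A rule $A_k\to\alpha$ with two variable occurrences in $\alpha$ then gives $y_k(z)\ge c\,\|y(z)\|_\infty^2$ whenever $\|y(z)\|_\infty\ge 1$.
\item Since also $y_k\le\|y\|_\infty$, this bounds $y$ on $[\rho/2,\rho)$.
\end{itemize}

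With these two repairs the rest of your plan goes through, including the Lyapunov--Schmidt computation of $F_z$ and $F_{tt}$ and the choice of the branch with $t<0$ for $z<\rho$.
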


Intuitively, the Drmota--Lalley--Woods theorem states that sufficiently well-behaved context-free languages have a universal asymptotic behaviour. Although the combinatorial structure of such languages may be complicated, their growth series always develops the same type of singularity at its dominant critical point, namely a square-root singularity. In other words, from the analytic point of view, all irreducible unambiguous context-free languages behave in essentially the same way near their radius of convergence.

\medbreak

The last result couples nicely with the following lemma:
\begin{lemma} \label{lem:common_radius}
    If $A,B$ are two variables in a common strongly connected component of the dependency graph, then $\Gamma_{L(A)}(z)$ and $\Gamma_{L(B)}(z)$ share the same radius of convergence.
\end{lemma}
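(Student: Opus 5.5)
The approach is to express the growth series of the languages derived from $A$ and $B$ in terms of one another, and to compare their radii of convergence using the fact that grammar productions only concatenate and add nonnegative terms. Let $L(A)$ be the words derivable from $A$, and $\rho_A$ the radius of convergence of $\Gamma_{L(A)}(z)$. It suffices to show that if $A\leadsto^* B$ in the dependency graph $D(G)$, then $\rho_A\le\rho_B$. Indeed, $A$ and $B$ lie in a common strongly connected component, so we also have $B\leadsto^* A$; this gives $\rho_B\le\rho_A$, and the two radii are equal.

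Suppose $A\leadsto^* B$. Unwinding the path in $D(G)$, we obtain a derivation $A\to^+ \alpha B\beta$ with $\alpha,\beta\in(V\cup\Sigma)^*$. Since the grammar is reduced, every variable occurring in $\alpha$ and $\beta$ derives some terminal word, so there exist terminal words $u,v\in\Sigma^*$ with $\alpha\to^* u$ and $\beta\to^* v$. Consequently $u\,L(B)\,v\subseteq L(A)$. The map $w\mapsto uwv$ is injective and shifts length by the constant $c=\ell(u)+\ell(v)$. Hence $\gamma_{L(A)}(n+c)\ge\gamma_{L(B)}(n)$ for all $n$, which coefficientwise gives
\[
z^c\,\Gamma_{L(B)}(z)\preceq \Gamma_{L(A)}(z).
\]
Taking $\limsup$ of $n$-th roots, we get $\alpha_{L(B)}\le\alpha_{L(A)}$, that is, $\rho_A\le\rho_B$.

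The inclusion $u\,L(B)\,v\subseteq L(A)$ gives only a lower bound on $\gamma_{L(A)}$ and does not require unambiguity. The point to watch is that the growth functions count words, not derivations, and the counting step uses nothing more than injectivity of $w\mapsto uwv$, so no multiplicity issue arises. The only other potential subtlety is degenerate components, for instance when $L(B)$ is finite and $\rho_B=\infty$. The argument still goes through symmetrically in that case, with the convention $1/0=\infty$.
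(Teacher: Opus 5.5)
Your proposal is correct and follows the paper's argument. The paper also derives $A\to^*\alpha B\beta$, deduces $\alpha L(B)\beta\subseteq L(A)$ and hence $\Gamma_{L(A)}(z)\ge z^{|\alpha|+|\beta|}\Gamma_{L(B)}(z)$, concludes $\rho_{L(A)}\le\rho_{L(B)}$, and obtains the reverse inequality by symmetry. Your use of reducedness to turn the sentential forms $\alpha,\beta$ into terminal words $u,v$ spells out a step the paper skips by taking $\alpha,\beta\in\Sigma^*$ directly.
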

\begin{proof}
    We have $A\to^* \alpha B\beta$ for some $\alpha,\beta\in \Sigma^*$, hence $L(A)\supseteq \alpha L(B)\beta$ and
    \[ \Gamma_{L(A)}(z) \ge z^{\abs \alpha+\abs \beta}\cdot \Gamma_{L(B)}(z),\]
    therefore $\rho_{L(A)}\le\rho_{L(B)}$. The reverse inequality holds for the same reason.
\end{proof}

\medskip

At the other end of the spectrum, many series diverges for easy reasons:
\begin{lemma} \label{lem:diverge}
	If $L$ is an infinite language that is closed under taking subwords, i.e.\ $uvw\in L\implies v\in L$ for all $u,v,w\in \Sigma^*$, then $\Gamma_{L}(z)$ diverges at the radius of convergence.
\end{lemma}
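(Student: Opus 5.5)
The natural approach is to exploit subword-closure to build long words by concatenation. The coefficients $\gamma_L(n)$ then cannot decay relative to $\alpha^n$, which forces divergence at $z=\rho=1/\alpha$.

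The key steps are as follows.

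\textbf{Infinite implies long words.} Since $L$ is infinite and subword-closed, $L$ contains words of every length. Indeed, any word of length $m\ge n$ has a subword of length $n$, and that subword lies in $L$. In particular $\gamma_L(n)\ge 1$ for all $n$, and hence $\alpha\ge 1$.

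\textbf{Submultiplicativity.} Every $w\in L\cap\Sigma^{m+n}$ factors uniquely as $w=uv$ with $\abs u=m$ and $\abs v=n$. Both $u$ and $v$ are subwords of $w$, so both lie in $L$. The map $w\mapsto(u,v)$ is injective, which gives
\[ \gamma_L(m+n)\le\gamma_L(m)\,\gamma_L(n). \]
By Fekete's lemma, $\lim_n \gamma_L(n)^{1/n}$ exists and equals $\inf_n \gamma_L(n)^{1/n}=\alpha$. Consequently $\gamma_L(n)\ge\alpha^n$ for every $n$, since each $\gamma_L(n)^{1/n}$ is at least the infimum.

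\textbf{Conclusion.} With $\rho=1/\alpha$,
\[ \Gamma_L(\rho)=\sum_{n\ge0}\gamma_L(n)\rho^n\ge\sum_{n\ge 0}\alpha^n\alpha^{-n}=\sum_{n\ge0}1=\infty. \]
So the series diverges at its radius of convergence. The bound $\alpha\ge1$ from the first step ensures $\rho\le 1$ is finite, so the statement is meaningful.

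I expect the main subtlety to be the direction of the inequality: Fekete's lemma gives the infimum characterization only for submultiplicative sequences. That is exactly the inequality obtained from subword-closure; closure under concatenation would give the opposite, supermultiplicative inequality. One must also check $\gamma_L(n)>0$ for all $n$, so that logarithms are defined when applying Fekete's lemma to $\log\gamma_L(n)$, and this is provided by the first step.
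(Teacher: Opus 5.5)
Your proof is correct and follows the paper's route: submultiplicativity of $\gamma_L$ from subword-closure, then Fekete's lemma to get $\gamma_L(n)\ge\alpha^n$, then comparison with $\sum_n 1$ at $z=\rho$. You also check explicitly that $\gamma_L(n)\ge 1$ for every $n$, so that Fekete applies to $\log\gamma_L$ and the infimum is positive; the paper only covers this with the remark ``this infimum is non-zero since $L$ is infinite''.
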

\begin{proof}
	Since $L$ is closed under subwords, we have $(L\cap \Sigma^{m+n}) \subseteq (L\cap\Sigma^m)(L\cap\Sigma^n)$ hence $\gamma_L(m+n)\le\gamma_L(m)\gamma_L(n)$: the growth function $\gamma_L$ is sub-multiplicative. By Fekete's lemma, it follows that the exponential growth rates exists and satisfies
	\[ \alpha(L) := \lim_{n\to\infty} \sqrt[n]{\gamma_L(n)} \overset!= \inf_{n\ge 1}\sqrt[n]{\gamma_L(n)}.\]
	(Moreover, this infimum is non-zero since $L$ is infinite.) It follows that $\gamma_L(n)\ge \alpha(L)^n$ hence
	\[ \Gamma_L(z) = \sum_{n=0}^\infty \gamma_L(n)\cdot z^n \ge \sum_{n=0}^\infty \bigl(\alpha(L)\cdot z\bigr)^n. \]
    Therefore, since the series on the right-hand side diverges at $z=\rho=1/\alpha(L)$, it follows that the series on the left-hand side also diverges at the same point.
\end{proof}
In particular, Lemma \ref{lem:diverge} applies to $L=\SL(Q,X)$ for any monoid $Q$ and any finite generating set $X$: growth series of infinite monoids diverge.

\bigskip

Finally, solutions of linear systems are monotonous as a function of the coefficients.
\begin{definition}[{Order on $\R[[z]]$}]
    We define $\sum_n a_nz^n \ge \sum_n b_nz^n$ if $a_n\ge b_n$ for all $n$. This order extends to vectors and matrices over $\R[[z]]$ if the inequalities hold component-wise.
\end{definition}
\begin{lemma} \label{lem:monotone}
    Consider two linear equations $\mathbf x = zA\cdot \mathbf x + \mathbf b$ and $\mathbf x=zA'\cdot\mathbf x+\mathbf b'$, where the coefficients of $A,\mathbf b,A',\mathbf b'$ are in $\R_{\ge 0}[[z]]$. If $A\le A'$ and $\mathbf b\le \mathbf b'$, then the solutions satisfy $$\mathbf x(A,\mathbf b)\le \mathbf x(A',\mathbf b').$$
\end{lemma}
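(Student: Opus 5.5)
The plan is to read the solution of $\mathbf x = zA\mathbf x+\mathbf b$ as the least fixed point of the map $\mathbf x\mapsto zA\mathbf x+\mathbf b$ in the ordered space of vectors over $\R_{\ge0}[[z]]$. Concretely, I will use the Neumann expansion
\[
\mathbf x(A,\mathbf b)=\sum_{k\ge 0} z^kA^k\mathbf b ,
\]
and show that this series makes sense coefficientwise. Because of the factor $z$, the product $z^kA^k$ has no terms of degree $<k$. So for each $n$, the coefficient of $z^n$ in the sum involves only finitely many $k$, namely $k\le n$. The series therefore converges in the $z$-adic topology of formal power series.

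It also satisfies the equation: substituting gives $zA\sum_k z^kA^k\mathbf b+\mathbf b=\sum_k z^kA^k\mathbf b$. Moreover, it is the unique solution in $\R[[z]]^m$. If $\mathbf y$ is the difference of two solutions, then $\mathbf y=zA\mathbf y$, so $\mathbf y = z^kA^k\mathbf y$ for every $k$, and hence every coefficient of $\mathbf y$ vanishes. Uniqueness is what justifies writing $\mathbf x(A,\mathbf b)$ for ``the solution''. The same argument applies to $A',\mathbf b'$.

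Monotonicity then follows from the fact that sums and products of series with nonnegative coefficients are monotone for the order defined above. If $0\le P\le P'$ and $0\le Q\le Q'$ (entrywise, coefficientwise), then $PQ\le P'Q'$, since each coefficient of $PQ$ is a finite sum of products of nonnegative numbers, each of which is bounded by the corresponding product for $P',Q'$. This extends to matrix products, whose entries are finite sums of such products. By induction on $k$ we get $0\le z^kA^k\mathbf b\le z^kA'^k\mathbf b'$. Summing over $k$, which is legitimate because each coefficient is a finite sum, gives $\mathbf x(A,\mathbf b)\le \mathbf x(A',\mathbf b')$.

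The only delicate point is making sure ``the solution'' is well defined: the fixed point is unique, and its coefficients are determined by finitely many terms. The factor $z$ in front of $A$ is exactly what guarantees both, so no invertibility assumption on $I-A$ or convergence of analytic functions is needed. If one instead wants the statement for the analytic functions on $[0,\rho)$, it follows immediately by evaluating at real $z\ge0$ inside the common disc of convergence, since coefficientwise inequalities between nonnegative series are preserved under such evaluation.
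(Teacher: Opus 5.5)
Your proof is correct and follows the paper's argument: the paper likewise writes the solution as the Neumann series $\mathbf b+(zA)\mathbf b+(zA)^2\mathbf b+\cdots$ and observes that it is increasing in $A$ and $\mathbf b$. You also justify three points the paper leaves implicit: the series converges coefficientwise because of the factor $z$, the solution is unique, and products of nonnegative series are monotone. This adds rigour but does not change the argument.
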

\begin{proof}
    The solution is given by the explicit formula
    \[ \mathbf x(A,\mathbf b)=\mathbf b + (zA)\mathbf b + (zA)^2\mathbf b + \ldots \in \R_{\ge 0}[[z]], \]
    which is clearly increasing in $A$ and $\mathbf b$.
\end{proof}

\subsection{Proof}
The goal of this section is to prove the following result.

\begin{theorem}\label{growth}
    Let $M$ be an inverse monoid generated by a finite set $X$, with $\abs X\ge 2$. Then $\alpha(M,X) \le \alpha(\FIM_X,X)$ with equality if and only if $M=\FIM_X$.
\end{theorem}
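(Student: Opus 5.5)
The inequality $\alpha(M,X)\le\alpha(\FIM_X,X)$ is immediate: by the universal property, $M$ is a quotient $\pi\colon\FIM_X\to M$. Every element of $M$ of length $n$ is the image of an element of $\FIM_X$ of length $n$, so $\gamma_M(n)\le\gamma_{\FIM_X}(n)$. For the strict inequality when $M\ne\FIM_X$, I would follow the sketch given before Theorem~\ref{thm:irred_growth_sensitive}. Choose $g_1\ne g_2$ in $\FIM_X$ with $\pi g_1=\pi g_2$, and let $w_1<w_2$ be their ShortLex representatives, with $f:=w_2$. A ShortLex representative in $M$ never contains $f$: replacing $f$ by $w_1$ yields a word that is no longer and strictly ShortLex-smaller, and represents the same element of $M$. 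Moreover $\SL(M,X)$ injects into $\SL(\FIM_X)$ via its $\FIM_X$-ShortLex lift, since the ShortLex $\FIM_X$-representative of an $M$-ShortLex word $w$ is at most $w$ and has the same image. Hence $\gamma_M(n)\le\gamma_{L^{f}}(n)$ up to this lifting argument, where $L=\SL(\FIM_X)$ and $L^f$ are the words of $L$ avoiding $f$. It therefore suffices to prove $\alpha(L^f)<\alpha(L)$ for every $f\in L$.

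Since the grammar of Theorem~\ref{Sl_Geo} is not irreducible, I would instead work with the algebraic system of growth series. The $E_M$ variables form a hierarchy: $E_M$ depends on $E_{\tilde X\setminus\{x^{-1}\}}$ and on $E_{M'}$ with $M'\subsetneq M$. The relevant strongly connected component consists of the variables $E_{\tilde X\setminus\{x^{-1}\}}$. These form an irreducible, non-linear component, because $E_{\tilde X\setminus\{x^{-1}\}}\to y E_{\tilde X\setminus\{y^{-1}\}}y^{-1}E_{\dots}$ and the $E_{\dots}$ can again reach some $E_{\tilde X\setminus\{z^{-1}\}}$ when $|X|\ge2$. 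By Theorem~\ref{thm:DLW} and Lemma~\ref{lem:common_radius}, the idempotent series $\Gamma_{E}$ have a common radius $\rho_E$ with a square-root singularity, so they are finite at $\rho_E$. The $S,S_x$ variables satisfy a linear system $\mathbf s=zA(z)\mathbf s+\mathbf b(z)$, whose coefficients are polynomials in the $E$-series. The radius $\rho$ of $\Gamma_{\FIM_X}$ is therefore the minimum of $\rho_E$ and the first $z$ at which the spectral radius of $zA(z)$ reaches $1$.

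The key analytic step is to show that this second mechanism strictly wins, i.e.\ $\rho<\rho_E$, so that the singularity at $\rho$ is a simple pole coming from $\det(I-zA(z))=0$. The argument is as follows. If $\rho=\rho_E$ and the linear part stayed subcritical there, then $\Gamma_{\FIM_X}$ would inherit a square-root singularity and converge at $\rho$, contradicting Lemma~\ref{lem:diverge}, which applies because $\SL(\FIM_X)$ is closed under subwords. The remaining case, in which both mechanisms occur at the same point, needs a more careful argument. I would try to rule it out by exhibiting a strict gain: the $S$-loop contributes reduced words of growth rate $2|X|-1$ times idempotent decorations. I expect this step, establishing $\rho<\rho_E$ cleanly, to be the main obstacle.

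Given $\rho<\rho_E$, I would handle $L^f$ by building an automaton-product grammar, tracking the longest suffix that is a prefix of $f$. Its linear part has matrix $A^f\le A$ and idempotent series $\Gamma_{E^f}\le\Gamma_E$. If $f$ lies entirely within an idempotent, then the reduced word of every occurrence is unaffected, and removing it only decreases $\Gamma_E$. Either way, by Lemma~\ref{lem:monotone} and the Perron--Frobenius strict monotonicity of the spectral radius for irreducible nonnegative matrices, the spectral radius of $zA^f(z)$ is strictly smaller than that of $zA(z)$ at $z=\rho$. The required strictness holds because $f$ really occurs in some cycle of the linear part, by irreducibility of the $S_x$ component for $|X|\ge2$. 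Hence $\Gamma_{L^f}$ is analytic at $\rho$, its radius exceeds $\rho$, and $\alpha(L^f)<\alpha(L)$, which concludes the proof.
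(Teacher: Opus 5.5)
\textbf{There is a genuine gap, and it is the step you flag.} Your conclusion that $\Gamma_{L^f}$ is analytic at $\rho$ needs the idempotent series $\Lambda^f_M$ (or their state-indexed refinements in your product grammar) to be analytic at $\rho$. The bound $\Lambda^f_M\le\Lambda_M$ only gives this when $\rho<\rho_E$. You rule out $\rho=\rho_E$ with a subcritical linear part, but you leave the coincident case open, so the argument is incomplete.

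The inequality $\rho<\rho_E$ is true, and you can check it by hand. Write $T=\Lambda_{\tilde X\setminus\{x^{-1}\}}$ and $s=1+z^2T$; then $\Lambda_M=s^{\abs M}$ and $s=1+z^2s^d$ with $d=2\abs X-1$. At $\rho_E$ one has $d\rho_E^2s^{d-1}=1$. Every row of $A$ has $d$ entries equal to $s^{d-1}$, so the spectral radius of $\rho_EA(\rho_E)$ is $1/\rho_E>1$.

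The paper does not need this fact. It only shows that $\Gamma_{\SL^f}$ \emph{converges} at $\rho_\SL$, using the bound $p(z,(c\Lambda_M)_M)/\det(I-czA(z))$ with a constant $c<1$. It then applies Lemma~\ref{lem:diverge} to $\SL^f$, which is closed under subwords, so its series must diverge at its own radius; this forces $\rho_{\SL^f}>\rho_\SL$. When $f$ lies inside an idempotent, your Perron--Frobenius step already gives $\Gamma_{L^f}(\rho)<\infty$, since the spectral radius of $\rho A(\rho)$ is at most $1$ and $\Lambda_M(\rho)<\infty$. So this one observation would close your first gap.

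\textbf{The second weakness is the arbitrary forbidden word.} You reduce to growth sensitivity of $\SL(\FIM_X)$ for every $f$, which the paper explicitly says it cannot quite prove. Suppose $f$ straddles main-path letters and idempotent factors. The product of the grammar with a pattern automaton then gives a linear system indexed by pairs (letter, automaton state), with idempotent series indexed by pairs of states. There is no same-size matrix $A^f\le A$ to compare with. The strict Perron--Frobenius comparison would need a lifting argument and an irreducibility statement for the product system, neither of which you provide.

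The paper avoids this with Lemma~\ref{lem:missing_idempotent}: any proper quotient identifies two distinct idempotents of $A_x$. So $f$ can be taken to be the ShortLex word of such an idempotent. Then every occurrence of $f$ lies inside a single idempotent factor, and $f\in\Sub(L(E_M))$ for all $M\in\mathcal P$. Hence $\Gamma_{\SL^f}=r\bigl(z,(\Lambda^f_M)_M\bigr)$ with the same rational function $r$, and a single constant $c<1$ with $\Lambda^f_M\le c\Lambda_M$ near $\rho_\SL$ completes the argument.
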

To do so, we fix a total order on $\tilde X$. We define a  set (to get a \emph{reduced} grammar).
\begin{align*}
\mathcal P & = \Bigl\{ \bigl\{y\in\tilde X:y\ge x\bigr\}\setminus Y \;\Big|\; x\in\tilde X,\, Y\subset\tilde X,\, \abs Y\le 2\Bigr\} \setminus\{\emptyset\}, \\
\mathcal Q & = \Bigl\{ \bigl\{y\in\tilde X:y\ge x\bigr\}\setminus Y \;\Big|\; x\in\tilde X,\, Y\subset\tilde X,\, \abs Y\le 1\Bigr\} \setminus\{\emptyset\}.
\end{align*}
Recall that the following grammar unambiguously produces $\SL(\FIM_X,X)$:
\begin{equation*}
	\begin{array}{lrlll}
		& S &\to &  E_{\tilde X\setminus\{x\}}xS_x \mid E_{\tilde X},\ & (x\in \tilde X), \\
		\text{for each } x\in \tilde X, & S_x & \to & E_{\tilde X\setminus\{x^{-1},y\}}yS_y \mid  E_{\tilde X\setminus \{x^{-1}\}} \mid \varepsilon,\ & (y\in \tilde X\setminus\{x^{-1}\}), \\
		\text{for each } M\in\mathcal P,& E_M & \to & xE_{\tilde X\setminus\{x^{-1}\}}x^{-1} E_{\{y\in M:y>x\}} \mid \varepsilon,\ & (x\in M\text{ and }x\ne\max M), \\
        \text{for each } M\in\mathcal P,& E_M & \to & xE_{\tilde X\setminus\{x^{-1}\}}x^{-1} \mid \varepsilon,\ & (x\in M \text{ and }x=\max M).
	\end{array}
\end{equation*}
(This is essentially the grammar from  Theorem \ref{Sl_Geo}, however we removed all the variables that cannot be reached from $S$. We replaced $E_\emptyset$ by $\varepsilon$, since $E_\emptyset\to\varepsilon$ was the only production rule that could applied.) The dependency graph between these variables is as follows:
\begin{center}
	\begin{tikzpicture}[scale=.9, thick]	
		\node[state, minimum size=25pt] (S) at (0,5) {$S$};
		{\footnotesize
		\draw[blue, very thick] (0,0) circle (2.5cm);
		\node[state, minimum size=22pt] (Sx) at (1.5,0) {$S_x$};
		\node[state, minimum size=22pt, inner sep=0pt] (Sxx) at (-1.5,0) {$S_{x^{-1}}$};
		\node[state, minimum size=22pt] (Sy) at (0,1.5) {$S_y$};
		\node[state, minimum size=22pt, inner sep=0pt] (Syy) at (0,-1.5) {$S_{y^{-1}}$};}
		
		\node[state, minimum size=25pt, inner sep=0pt] (EX) at (8,5) {$E_{\tilde X}$};
        \node[state, minimum size=22pt] (EN1) at (4,-.7) {\footnotesize$E_M$};
        \node[state, minimum size=22pt] (EN2) at (3.4,.8) {};
        \node[state, minimum size=22pt] (EN3) at (4.7,.7) {};
        \node at (4,-1.7) {\footnotesize($M\in\mathcal P\setminus\mathcal Q$)};
        \path (2.5,-.5) edge[-latex, very thick] (EN1);
        \path (EN1) edge[-latex, very thick] (5.5,-.5);
        \path (2.5,.5) edge[-latex, very thick] (EN2);
        \path (EN2) edge[-latex, very thick] (EN3);
        \path (EN3) edge[-latex, very thick] (5.5,.5);
        
		\draw[red, very thick] (8,0) circle (2.5cm);
		\node[state, minimum size=25pt, inner sep=0pt] (EM) at (8,0) {$E_{M}$};
		\node[state, minimum size=25pt, inner sep=0pt] (EM1) at (6.5,1) {};
		\node[state, minimum size=25pt, inner sep=0pt] (EM2) at (9,1.5) {};
		\node[state, minimum size=25pt, inner sep=0pt] (EM3) at (7,-1.5) {};
		\node at (9,-1.2) {\footnotesize($M\in\mathcal Q\setminus\{\tilde X\}$)};
		
		\path (S) edge[-latex, very thick] (EX);
		
		\path (Sx) edge[-latex, bend right] (Sy);
		\path (Sy) edge[-latex, bend right] (Sxx);
		\path (Sxx) edge[-latex, bend right] (Syy);
		\path (Syy) edge[-latex, bend right] (Sx);
		
		\path (Sx) edge[-latex, bend right] (Syy);
		\path (Syy) edge[-latex, bend right] (Sxx);
		\path (Sxx) edge[-latex, bend right] (Sy);
		\path (Sy) edge[-latex, bend right] (Sx);
		
		\path (Sx) edge[loop right] (Sx);
		\path (Sy) edge[loop above] (Sy);
		\path (Sxx) edge[loop left] (Sxx);
		\path (Syy) edge[loop below] (Syy);
		
		\path (EM) edge[-latex, dotted] (EM1);
		\path (EM1) edge[-latex, dotted] (EM2);
		\path (EM2) edge[-latex, dotted] (EM);
		\path (EM1) edge[-latex, dotted, bend right] (EM3);
		\path (EM3) edge[-latex, dotted, bend right] (EM1);
		
		\path (S) edge[-latex, out=-135, in=100, very thick] (-1.5,2);
		\path (S) edge[-latex, very thick] (6,1.5);
		\path (EX) edge[-latex, out=-45, in=80, very thick] (9.5,2);
		\path (1.5,-2) edge[-latex, out=-30, in=-150, very thick] (6.5,-2);
	\end{tikzpicture}
	\captionof{figure}{The dependency graph, with its strongly connected components}
\end{center}
Let us make a few observations:
\begin{itemize}[leftmargin=6mm]
    \item The grammar formed by the variables $E_M$ with $M\in\mathcal Q\setminus\{\tilde X\}$ satisfies the conditions of Theorem \ref{thm:DLW} and Lemma \ref{lem:common_radius}. (Note that the starting variable does not matter.) It follows that the growth series $\Lambda_M(z)$ have a square root singularity at at their common radius $\rho_E$. In particular, the series $\Lambda_M(z)$ are convergent at $\rho_E$.
    \item We also observe that \vspace*{1mm}
\[ \Lambda_{M}(z) = 1 + \sum_{x\in M} z^2 \cdot \Lambda_{\tilde X\setminus\{x^{-1}\}}(z)\cdot \Lambda_{\{y\in M:y>x\}}(z), \vspace*{-1mm} \]
(with the convention $\Lambda_\emptyset=1$). By induction on $\abs M$, we can express each series $\Lambda_M$ (with $M\in\mathcal P\setminus \mathcal Q$ and $M=\tilde X$) as a polynomial in $z$ and $\Lambda_N$ (for $N\in\mathcal Q\setminus\{\tilde X\}$). It follows that \emph{all} the series $\Lambda_M$ (with $M\in\mathcal P$) share a common radius of convergence $\rho_E$, where they have a square root singularity, in particular converge.
\smallbreak
\end{itemize}
The other growth series $\Gamma_{\SL}(z)$ and $\Delta_x(z) := \Gamma_{L(S_x)}(z)$ satisfy a system of linear equations:

\begin{align}
	\Gamma_\SL(z) & = \Lambda_{\tilde X}(z) + \sum_{x\in\tilde X} z \cdot \Lambda_{\tilde X\setminus\{x\}}(z)\cdot \Delta_x(z), \label{eq: gamma}\\
	\Delta_x(z) & = 1 + \Lambda_{\tilde X\setminus\{x^{-1}\}}(z) + \sum_{y\ne x^{-1}} z\cdot  \Lambda_{\tilde X\setminus\{x^{-1},y\}}(z)\cdot \Delta_y(z),\label{eq: delta}	
\end{align}
 We obtain that $\Gamma_\SL(z) = r\bigl(z,(\Lambda_M(z))_M \bigr)$ where $r=\frac pq$ is a rational function. We can give more precise expressions for $p$ and $q$. We define the vectors
\[
\mathbf{\Delta(z)}
= (\Delta_x(z))_{x\in\widetilde X},
\quad
\mathbf{b}(z)
= \left( 1+ \Lambda_{\widetilde X\setminus\{x^{-1}\}}(z)
\right)_{x\in\widetilde X},
\quad\text{and}\quad 
\mathbf{c}(z)
= \left( \Lambda_{\widetilde X\setminus\{x\}}(z) \right)_{x\in\widetilde X}
\]
and the matrix $A(z)$, indexed by $\tilde X$, by
$$
A_{x,y}(z)
=
\begin{cases}
	\Lambda_{\widetilde X\setminus\{x^{-1},y\}}(z),
	& y\neq x^{-1},\\[1mm]
	0,  & y=x^{-1}.
\end{cases}
$$
Then Equations~\eqref{eq: gamma} and \eqref{eq: delta} become
\[
	\mathbf\Delta(z)
	=
	\mathbf b(z)+ z\,A(z)\mathbf\Delta(z)
	\qquad 
	\text{and}
	\qquad 
		\Gamma_\SL(z)
	=
	\Lambda_{\widetilde X}(z)
	+
	z\,\mathbf c(z)^{\mathsf T}\mathbf\Delta(z).
\]

The full system can be written as
$$
\begin{pmatrix}
	1 & -z\,\mathbf c(z)^{\mathsf T}\\
	0 & I-z\,A(z)
\end{pmatrix}
\begin{pmatrix}
	\Gamma_\SL(z)\\
	\mathbf\Delta(z)
\end{pmatrix}
=
\begin{pmatrix}
	\Lambda_{\widetilde X}(z)\\
	\mathbf b(z)
\end{pmatrix}.
$$
Thus, by Cramer's rule, $\Gamma_\SL(z)=\frac pq$ with 
\begin{align}
    p\bigl(z,(\Lambda_M)_M\bigr)
    & = \det\begin{pmatrix}
	\Lambda_{\tilde X}(z) & -z\,\mathbf c(z)^{\mathsf T}\\
	\mathbf b(z) & I-z\,A(z)
\end{pmatrix} \in \Z[z, \Lambda_M], \label{eq:numerator}\\[1mm]
    q\bigl(z,(\Lambda_M)_M\bigr)
	& = \det\bigl(I-z\,A(z)\bigr). 
\label{eq:denominator}
\end{align}
To continue the proof we need to introduce further notations and prove one lemma.
\begin{itemize}[leftmargin=6mm]
    \item For $x\in \tilde X$, we denote by $A_x\subset\FIM_X$ the set of idempotents with geodesic representatives of the form $xwx^{-1}$.
    \item For $r\ge 0$, we define $w_r:=\prod\{e: e\in E(\FIM_X),\ |e|=2r\}$.
    \end{itemize}
    
\begin{lemma} \label{lem:missing_idempotent}
	Consider the free inverse monoid $\FIM_X$ of rank $\abs X\ge 2$ and $x\in \tilde X$. In any proper quotient of $\FIM_X$ there exists distinct idempotents $e_1,e_2\in A_x$ such that $e_1\sim e_2$.
\end{lemma}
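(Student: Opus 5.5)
The plan is to use the proper quotient to find two distinct idempotents of $\FIM_X$ that become equal, and then to conjugate them into $A_x$ by a suitable reduced word, checking with Munn trees that the conjugates remain distinct. Throughout, $\sim$ denotes the congruence on $\FIM_X$ induced by the quotient map.

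\textbf{Step 1: two comparable distinct idempotents are identified.} Since the quotient is proper, there exist $g\neq h$ in $\FIM_X$ with $g\sim h$. Because the quotient map is a morphism of inverse monoids, we get $gg^{-1}\sim hh^{-1}$ and $g^{-1}g\sim h^{-1}h$. If both pairs were equal in $\FIM_X$, then $g\,\RR\,h$ and $g\,\LL\,h$, so $g\,\HH\,h$. By Theorem~\ref{Munn_Green}(4) this forces $g=h$, a contradiction. Hence there are idempotents $f_1\neq f_2$ with $f_1\sim f_2$. Multiplying by $f_1$ gives $f_1\sim f_1f_2$, and multiplying by $f_2$ gives $f_2\sim f_1f_2$. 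At least one of these pairs consists of distinct elements. We therefore obtain idempotents $e\neq f$ with $e\sim f$ and $f=ef$. In Munn tree terms, $\Gamma(e)\subsetneq\Gamma(f)$, and both trees are rooted at the same vertex $\alpha$. Fix a vertex $v\in\Gamma(f)\setminus\Gamma(e)$ and let $r_v$ be the reduced word labelling the path from $\alpha$ to $v$. Note that $r_v\neq\varepsilon$, because $\alpha\in\Gamma(e)$.

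\textbf{Step 2: conjugate into $A_x$.} Since $\abs X\ge 2$, choose $y\in\tilde X\setminus\{x,x^{-1}\}$, and let $N>\ell(f)$. Put $u_\pm = x\,y^{\pm N}$. Then $u_\pm e u_\pm^{-1}\sim u_\pm f u_\pm^{-1}$, and both elements are idempotents. The Munn tree of $u e u^{-1}$ is the simple path labelled $u$ together with a copy of $\Gamma(e)$ glued at the endpoint of that path. Because $N$ exceeds the diameter of $\Gamma(f)$, the glued copies never reach the root. So in both $u_\pm e u_\pm^{-1}$ and $u_\pm f u_\pm^{-1}$ the root has degree one, with its only edge labelled $x$. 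This means that the geodesic representatives have the form $x w x^{-1}$, so all four elements lie in $A_x$.

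\textbf{Step 3: the conjugates remain distinct (main obstacle).} We must show that $u e u^{-1}\neq u f u^{-1}$ for at least one choice of sign. The only way the extra vertex $v$ can disappear is if its translate already lies on the path $u$, that is, if $r_v=y^{\mp k}$ for some $1\le k\le N$. The key observation is that $r_v$ cannot equal both $y^{-k}$ and $y^{k'}$ with $k,k'\ge1$. Hence for at least one of $u_+$ or $u_-$, the translate of $v$ is off the path. For that choice the Munn trees differ, so Theorem~\ref{Munn_Green}(4) gives $u f u^{-1}\neq u e u^{-1}$. Taking $e_1=ueu^{-1}$ and $e_2=ufu^{-1}$ then gives two distinct idempotents of $A_x$ with $e_1\sim e_2$, as required.

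The bookkeeping in Step 3 is where I expect the care to be needed: identifying which vertices of the glued copy of $\Gamma(f)$ can coincide with vertices of the path. Choosing $N$ large and using the two opposite directions $y^{\pm1}$ is what I would try first to resolve it.
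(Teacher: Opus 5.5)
Your proof is correct. Steps 2--3 follow the paper's idea, but Step 1 takes a genuinely different and shorter route.

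\textbf{Step 1.} The paper first constructs $v_1\neq v_2$ with $v_1\sim v_2$ and the same reduced form $\bar v_1=\bar v_2$. It does this using the ball idempotent $w_r$ and the identity $w_r(u_1\bar u_1^{-1})=w_r$, and only then forms the idempotents $f_i=v_iv^{-1}$. You bypass all of this. Congruences on inverse monoids respect inverses, and $\FIM_X$ is $\HH$-trivial by Theorem~\ref{Munn_Green}(4). So for any identified pair $g\neq h$, either $gg^{-1}\neq hh^{-1}$ or $g^{-1}g\neq h^{-1}h$, and these idempotents are still identified. This is the standard fact that idempotent-separating congruences on an inverse monoid are contained in $\HH$. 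It is more conceptual, and your Step 1 works verbatim in any $\HH$-trivial inverse monoid. The further reduction to a comparable pair with $f=ef$ is harmless but not needed: any vertex in the symmetric difference of $\Gamma(f_1)$ and $\Gamma(f_2)$ would serve equally well, which is what the paper uses.

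\textbf{Steps 2--3.} These match the paper's conjugation step, differing only in how the conjugator is chosen.
\begin{itemize}
\item The paper conjugates by a long reduced word $u=xwy^{-1}$. It picks $y\neq z$, where $z$ is the first letter of the reduced path to a distinguishing vertex $q$, so the translate of $q$ branches off the path.
\item You fix a single $y\notin\{x,x^{-1}\}$ and choose between $xy^{N}$ and $xy^{-N}$. You first check that the translate $ur_v$ can be absorbed into the path only when $r_v=y^{\mp k}$ with $1\le k\le N$; the root itself is excluded because $N$ exceeds the diameter of $\Gamma(f)$.
\end{itemize}
Both versions use $\abs X\ge 2$ at the same point. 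Your bookkeeping of which translated vertices can coincide with path vertices is more explicit than the paper's.
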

\begin{proof}
	Let $M$ be a proper quotient of $\FIM_X$. Then there exist two distinct elements $u_1,u_2\in \FIM_X$ such that $u_1\sim u_2$ in $M$. Recall that, for $g\in\FIM_X$, we denote $\bar g\in\FIM_X$ the associated reduced word / its ``main path''. We argue in two steps:

    \bigskip
    
    \noindent (1) There exist distinct elements $v_1,v_2\in\FIM_X$ with $\bar v_1=\bar v_2$ and $v_1\sim v_2$.
    \medskip
    
    \noindent We suppose that $\bar u_1\ne \bar u_2$. Take $r\ge 2\max\{\abs {u_1},\abs{u_2}\}$. Since $w_r(u_1\bar u_1^{-1})=w_r$, we have
    \begin{align*}
    u_1\bar u_1^{-1}\sim u_2\bar u_1^{-1} 
    & \,\implies 
    w_r(u_1\bar u_1^{-1})w_r(u_2\bar u_1^{-1}) \sim w_r(u_2\bar u_1^{-1})w_r(u_1\bar u_1^{-1}) \\
    & \iff 
    w_r(u_2\bar u_1^{-1}) \sim w_r(u_2\bar u_1^{-1})w_r.
    \end{align*}
    Note that $v_1:=w_r (u_2\bar u_1^{-1})\ne w_r(u_2\bar u_1^{-1}) w_r=:v_2$, since the Munn tree of the LHS is a ball of radius $r$, while the RHS is the union of two balls or radius $r$. Moreover 
    \[
    \overline{w_r(u_2\bar u_1^{-1}) w_r} 
    = \overline{w_r (u_2\bar u_1^{-1})} 
    = \bar u_2\bar u_1^{-1}.
    \]
   
   \noindent (2) There exists distinct idempotents $e_1,e_2\in A_x\subset E(\FIM_X)$ with $e_1\sim e_2$.
   \medskip
    
    \noindent Put $v:=\overline{v_1}=\overline{v_2}$. Then $f_1:=v_1v^{-1}$ and $f_2:=v_2v^{-1}$ are idempotent and are equal in the quotient. Moreover $f_1\ne f_2$, since $v_1\neq v_2$ and $\overline{v_1}=\overline{v_2}$, hence
    \[ \Gamma(f_1)=\Gamma(v_1)\neq \Gamma(v_2)=\Gamma(f_2). \]

\noindent Let $q$ be a vertex belonging to only one of $\Gamma(v_1)$ and $\Gamma(v_2)$ and $z$ be the first letter of $\overline{q}$ and $y\neq z$ be a letter. Let $w$ be a word such that $u=xwy^{-1}$ is reduced and $|u|>\max\{|f_1|,|f_2|\}$.

\noindent We check that $e_1=uf_1u^{-1}$ and $e_2=uf_2u^{-1}$ satisfy the claim:
\begin{itemize}[leftmargin=6mm]
    \item They are clearly idempotent and represent the same element in the quotient.
    \item Since $u$ is longer than $f_1$ and $f_2$, when we read these subwords after reading $u$, we do not get back to the initial vertex. So the only edge in $\Gamma(e_1),\Gamma(e_2)$ leaving the initial vertex is the one labelled by $x$, i.e., $e_1,e_2\in A_x$
    \item To see that $f_1\ne f_2$, assume w.l.o.g. that $q\in \Gamma(v_1)$. Then, the vertex $q'$ which is the corresponding to $q$ in the translation of $\Gamma(v_1)$ obtained after $u$ belongs to $\Gamma(e_1)$. Also, it belongs to the branch at $u$ that starts with $z$. We cannot reach it in $e_2$ because it does not belong to $\Gamma(uf_2)$ and we then follow the branch starting in $y$. \qedhere
\end{itemize}
\end{proof}
    
\begin{center}
    \begin{tikzpicture}[scale=1.5]
        \draw[red, fill=red, fill opacity=.1] plot[smooth cycle] coordinates {(0, -.2) (1, -1) (1.6, 2) (-1, 1.5)};
        \draw[blue, fill=blue, fill opacity=.1] plot[smooth cycle] coordinates {(-.4, -.2) (1.4, -.8) (1, 2.3) (-.8, 1.5)};

        \node[circle, inner sep=1pt, fill=black, label=above:$q$] (q) at (1.45,1.55) {};
        \node[circle, inner sep=.7pt, fill=black] (e) at (.3,1) {};
        \node[circle, inner sep=.7pt, fill=black] (z) at (.7,1) {};
        \node[circle, inner sep=.7pt, fill=black] (y) at (.3,.6) {};
        \node[circle, inner sep=.7pt, fill=black] (x) at (-.8,-.6) {};
        \node[circle, inner sep=1pt, fill=black, label=below:\small$\alpha(e_i)$] (alpha) at (-.8,-1) {};
        \draw[-latex] (e) to (z);
        \node at (.47,1.15) {\small$z$};
        \draw (z) to[out=30,in=-150, looseness=3] (q);

        \draw[-latex] (e) to (y);
        \node at (.15,.83) {\small$y$};
        \draw (y) to[out=-130, in=90, looseness=2] (x);
        \node at (-.9,.1) {$w$};
        \draw[-latex] (alpha) -- (x);
        \node at (-.95,-.83) {\small$x$};
        
        \node[red] at (2.1,1.6) {$\Gamma(f_1)$};
        \node[blue] at (1.9,-.4) {$\Gamma(f_2)$};
        
    \end{tikzpicture}
    \captionof{figure}{Construction of $e_1,e_2$.}
\end{center}

Let $w_1,w_2$ be the ShortLex representatives of $e_1,e_2$ with $w_1<w_2$ in the ShortLex order. Then $f=w_2$ satisfies $f\in \Sub(L(E_M))$ for $M=\tilde X\setminus\{x^{-1}\}\in\mathcal Q\setminus\{\tilde X\}$ (hence for all $M\in\mathcal P$, using the dependency graph), and $f\notin\SL(Q,X)$. It follows that $\SL(Q,X)\subseteq \SL^f \subsetneq\SL$. The language $\SL^f$ is defined by a similar grammar: we have the same production rules for $S, S_x$, and then define infinitely many production rules
\[ E_M \to w  \qquad \bigl(w\in L(E_M)^f\bigr). \]
The key point is that $\Gamma_{\SL^f}(z) = r\bigl(z,(\Lambda^f_M(z))_M\bigr)$. Now we compare the radius of convergence of this new series with that of $\Gamma_\SL(z)$, understanding where the singularities come from:
\begin{itemize}[leftmargin=6mm]
    \item $\Gamma_{\SL}(z)$ diverges at $\rho_\SL$ (Lemma \ref{lem:diverge})
    \item the numerator $p$ is a polynomial function of $z$ and $\Lambda_M(z)$ (Equation (\ref{eq:numerator})), hence converges at $\rho_\SL$ (separating two cases depending if $\rho_\SL<\rho_E$ or $\rho_\SL=\rho_E$).
\end{itemize}
It follows that the denominator $q$ must vanish: $q(\rho_\SL,\Lambda_M(\rho_\SL))=0$. Moreover, we have $\Lambda^f_M(z)<\Lambda_M(z)$ for all $z>0$ and all $M\in\mathcal P$. We can find $c<1$ such that $\Lambda^f_M(z)\le c\Lambda_M(z)$ for all $z\ge \frac12\rho_\SL$ and all $M\in\mathcal P$, and $\frac1c\notin \rho_\SL\cdot \mathrm{Spec}(A(\rho_\SL))$ (a finite set). Combining Lemma \ref{lem:monotone} and Equation (\ref{eq:denominator}), we have
\[ \Gamma_{\SL^f}(z)=r\bigl(z,(\Lambda^f_M(z))_M\bigr) \le r\bigl(z,(c\Lambda_M(z))_M\bigr) = \frac{p\bigl(z,(c\Lambda_M(z))_M\bigr)}{\det(I-czA(z))} \le \Gamma_\SL(z) \]
(for all $z$ where these series converge). Since the series $\Gamma_{\SL^f}$ is divergent, this ensures that no singularity is reached before $\rho_\SL$. Moreover, by definition of $c$, the denominator does not vanish at $z=\rho_\SL$. It follows that $\rho_{\SL^f}>\rho_\SL$, i.e., 
\[ \alpha(Q,X) \le\alpha(\SL^f)= \frac1{\rho_{\SL^f}}<\frac1{\rho_\SL}=\alpha(\FIM_X,X).\]

\section{Future work}
We believe that the results presented in this paper lay the groundwork for several future research project. In what follows, we discuss open problems and potential extensions that are either currently under active investigation or that we intend to work on the near-future.

\begin{enumerate}[leftmargin=6mm]
\item A primary objective for future work is to determine the precise language-theoretic complexity of the representative sets $\DGeo$, $\RGeo$, and $\LGeo$. Given that these languages are neither context-free nor co-context-free, it remains to be determined where they reside within the Chomsky hierarchy, specifically, whether they are context-sensitive or even if they belong to other classes of languages, such as indexed.

\item It is natural to try to extend the study to conjugacy by defining the conjugacy languages on free inverse monoids and develop an analogous framework as developed in this paper.

\item An important question to address is the dependency of these results on the chosen generating set. We aim to determine whether the complexity classifications identified for these languages are invariant under change of the generating set.

\item Finally, we intend to explore the applicability of this framework to broader classes of semigroups and monoids. We seek to understand how the definition and complexity of the Green languages evolve when moving beyond free inverse monoids.

\end{enumerate}

\paragraph{AI statement.} The authors  declare that no   AI tools  were used to produce this manuscript and that this work was entirely produced by the authors.

\paragraph{Acknowledgments.} The authors would like to thank Tara Brough for crucial discussions that led to the proof of Theorem \ref{geo_CF}.

\paragraph{Funding.} The first author is supported by the Swiss SNF grant P500PT-225420. The second author was supported by national funds through the Fundação para a Ciência e Tecnologia, FCT, under the project
UID/04674/2025. The third author is supported by national funds through the FCT – Fundação para a Ciência e a Tecnologia, I.P., under the scope of the individual research grant 2025.03264.BD and the projects UID/297/2025 and UID/PRR/297/2025 (Center for Mathematics and Applications - NOVA Math).

\bibliographystyle{plain}
\bibliography{Bibliografia}

\end{document}